\renewcommand{\P}{{\mathbb P}}
\renewcommand{\L}{{\mathcal L}}
\newcommand{\Sc}{{\mathcal S}}
\newcommand{\R}{{\mathbb R}}
\newcommand{\N}{{\mathbb N}}
\newcommand{\I}{{\mathbb I}}
\newcommand \A[1]{{\bf (#1)}}
\def\i{{\mathbf{i}}}
\def\x{{\mathbf{x}}}
\def\y{{\mathbf{y}}}
\def\z{{\mathbf{z}}}
\def\p{{\mathbf{p}}}
\def\q{{\mathbf{q}}}
\def\s{{\mathbf{s}}}
\def\bxi{{\boldsymbol{\xi}}}
\def\0{{\mathbf{0}}}
\def\bzeta{{\boldsymbol{\zeta}}}
\def\btheta{{\boldsymbol{\theta}}}
\newcommand{\E}{{\mathbb E}}
\newcommand{\M}{{\mathbb M}}
\newcommand{\F}{{\mathcal F}}
\newcommand{\leftB}{{{[\![}}}
\newcommand{\rightB}{{{]\!]}}}
\renewcommand{\S}{{\mathbb S}}
\newcommand{\LAL}{\Delta^{\frac \alpha 2,A,1}}
\newcommand{\LALS}{\Delta^{\frac \alpha 2,A,1,s}}
\newcommand{\LALB}{\Delta^{\frac \alpha 2,A,1,l}}
\theoremstyle{plain}
\newtheorem{theorem}{Theorem}[section]
\newtheorem{lemma}[theorem]{Lemma}
\newtheorem{proposition}[theorem]{Proposition}
\newtheorem{definition}[theorem]{Definition}
\theoremstyle{definition}
\newtheorem{remark}{Remark}[section]
\newcommand{\mysection}{\setcounter{equation}{0} \section}
\def\nero{\textcolor{blue}}
\def\nero{\textcolor{black}}
\def\LAL#1#2{\Delta^{\frac {\alpha_{#1}} 2,A,#1,#2}}
\def\LALS#1#2{\Delta^{\frac {\alpha_{#1}} 2,A,#1,#2,s}}
\def\LALB#1#2{\Delta^{\frac {\alpha_{#1}} 2,A,#1,#2,l}}
\begin{document}

\title{ $L^p $ Estimates for  Degenerate Non-local Kolmogorov Operators
}
\author{L. Huang}

\address{Michigan State University,
East Lansing (MI). United States 
}

\email{huanglor@stt.msu.edu}

\author{ S. Menozzi}
\address{ Universit\'e d'Evry Val-d'Essonne, 23 Boulevard de France, 91037 Evry and Higher School of Economics, National Research University, Shabolovka 31, Moscow, Russian Federation}
\email{stephane.menozzi@univ-evry.fr}

\author{E. Priola}
\address{Universit\`a di Torino, Dipartimento di Matematica ``Giuseppe Peano", via Carlo Alberto 10, Torino, Italy}
\email{enrico.priola@unito.it}

\date{\today}

\begin{abstract}
%
%
%
%

Let $z = (x,y) \in \R^d \times \R^{N-d}$, with $1 \le d < N$. We prove a priori estimates of the following type :
\[
\|\Delta_{x}^{\frac \alpha 2} v \|_{L^p(\R^N)} \le
c_p
\Big \| L_{x } v  +  \sum_{i,j=1}^{N}a_{ij}z_{i}\partial_{z_{j}} v \Big \|_{L^p(\R^N)},
 \;\; 1<p<\infty,
\]
for $v \in C_0^{\infty}(\R^N)$,
where $L_x$ is a non-local operator comparable with the $\R^d $-fractional Laplacian $\Delta_{x}^{\frac \alpha 2}$ in terms of symbols, $\alpha \in (0,2)$. 
We require that when $L_x$ is replaced by the classical $\R^d$-Laplacian $\Delta_{x}$, i.e.,  in the limit local case $\alpha =2$,  the operator
$ \Delta_{x} + \sum_{i,j=1}^{N}a_{ij}z_{i}\partial_{z_{j}} $  satisfy  
a  weak type H\"ormander condition with invariance by suitable dilations. {Such} estimates were only known 
for  $\alpha =2$.
 This is
one of the first results on $L^p $ estimates for degenerate non-local operators under H\"ormander type conditions. 
We complete our result on $L^p$-regularity for $ L_{x }   +  \sum_{i,j=1}^{N}a_{ij}z_{i}\partial_{z_{j}} $  by proving 
estimates like
\begin{equation*} \label{new}
\|\Delta_{y_i}^{\frac {\alpha_i} {2}} v \|_{L^p(\R^N)} \le
c_p
 \Big \| L_{x } v  +  \sum_{i,j=1}^{N}a_{ij}z_{i}\partial_{z_{j}} v \Big \|_{L^p(\R^N)},
\end{equation*}
involving  fractional Laplacians in the degenerate directions $y_i$ (here $\alpha_i \in (0, { {1\wedge \alpha}})$ depends  on $\alpha $ and  on the numbers of commutators needed to obtain the $y_i$-direction). The last estimates   are new even in the local limit case $\alpha =2$ which is also considered. 


\end{abstract}

\keywords{Calder\'on-Zygmund Estimates, Degenerate Non-local Operators,  Stable Processes}
\subjclass[2010]{35B45,  42B20, 60J75, 47G30}

\maketitle

\mysection{Introduction and Setting of the problem
} \label{intro}

  
%
 We prove global $L^p$-estimates of Calder\'on-Zygmund type   for degenerate non-local Kolmogorov operators (see \eqref{OP_SPAZIALE} below) acting on regular functions defined on $\R^N$.  This is one of the first results on $L^p $ estimates for degenerate non-local operators under H\"ormander type conditions. To prove the  result we combine analytic and probabilistic techniques.  Our estimates allow
to address corresponding martingale problems or to study related parabolic Cauchy problems with $L^p$ source terms. 
%
%
%
%
%

In particular, we  consider operators which are  sums  of a fractional {like} Laplacian acting only on some (non-degenerate) variables plus a first order linear term acting on all $N$ variables  which satisfies 
a  weak type H\"ormander condition with invariance by suitable dilations (see, for instance examples \eqref{KOLM_OP}, \eqref{r44}, \eqref{r44_COUPLED} below). We obtain maximal $L^p$-regularity with respect to the Lebesgue measure both in  the non-degenerate variables and in the remaining degenerate variables. In the degenerate variables our estimates are new even in the well-studied limit local case when the fractional like Laplacian is replaced by the  Laplacian. They are also related to  well-known estimates by  Bouchut \cite{Bo02} on transport kinetic equations involving only one commutator; our  estimates  depend  on the number of commutators one needs to obtain the degenerate directions.

We stress that  $L^p$ estimates for non-local non-degenerate L\'evy type operators  have been investigated for a long time  also motivated by applications to  martingale problems. 
Significant works in that direction are for instance \cite{stro:75},  \cite{koch:89},  \cite{miku92}, 
  \cite{hoh:94},  \cite{babo07},  
 \cite{DoKi12},  \cite{zhan:13},  \cite{klk16}.
Such operators also naturally appear in  Physical applications for  the study of anomalous diffusions  (see e.g. \cite{mbb01}). 
 However, the corresponding degenerate problems have been rarely addressed and our current work can be seen as a first step towards the understanding of regularizing properties, of hypoellipticity type, for degenerate 
non-local operators satisfying a weak type H\"ormander condition.




To introduce   our setting
let $1 \le d < N$ and consider first the following non-local operator on $\R^d$:
\begin{equation}
\label{NON_DEG_OP}
L_\sigma \phi (x)=\int_{\R^d}\Big(\phi(x+\sigma y)-\phi(x)-\nabla \phi(x)\cdot \sigma y \I_{|y|\le 1}\Big) 
\nu(dy),  \;\; x \in \R^d, 
\end{equation}
where  ${\I}_{ |y| \le 1 }$ is the indicator function of the unit  ball,
  the function  $\phi\in C_0^{\infty}(\R^d) $ (i.e., $\phi : \R^d \to \R$ is infinitely differentiable with compact support)  and  the matrix $\sigma \in \R^d\otimes \R^d $ satisfies the non-degeneracy assumption:
\begin{trivlist}
\item[\A{UE}] There exists $\kappa\ge 1$ s.t. for all $x\in \R^d$,
$$ \kappa^{-1}|x|^2\le \langle \sigma\sigma^* x,x\rangle \le \kappa |x|^2,$$ 
\end{trivlist}
where  $|\cdot|$ denotes the Euclidean norm, $\langle \cdot, \cdot \rangle $ or $\cdot$ is the inner product and  ${}^* $ stands for the transpose; $\nu $ is a
 non-degenerate
symmetric
stable
L\'evy measure
of order $\alpha\in (0,2) $.  Precisely, writing
 $y=\rho s,\ (\rho,s)\in \R_+\times \S^{d-1} $, where $\S^{d-1}$ stands for the unit sphere of $\R^d$, the measure $\nu $ decomposes as:
 \begin{equation}
\label{STABLE_MEAS}
\nu(dy)=\frac{\tilde \mu(ds) d\rho}{\rho^{1+\alpha}},
 \end{equation}
where $\tilde \mu $ is a Borel finite measure on $\S^{d-1} $ which is the spherical part  of $\nu$.  If $\sigma = I_{d\times d } $ (identity matrix of $\R^d $) and $\tilde \mu$ is proportional to the surface measure then $L_{I_{d\times d}}$ coincides with the fractional Laplacian on $\R^d $ with symbol $- |\lambda|^{\alpha}$.
The L\'evy symbol  associated with $L_{\sigma}$ is given by the L\'evy-Khintchine formula
$$
\Psi(\lambda)= 
 \int_{\R^d} \Big(  e^{i \langle \sigma^* \lambda, y \rangle }  - 1 - \, { i \langle \sigma^* \lambda,y
\rangle} \, {\I}_{ |y| \le 1 } \, (y) \Big ) \nu (dy),\;\; \lambda \in \R^d
$$
(see, for instance,  \cite{sato},  \cite{Jacob1} and  \cite{appl:09}).
From  Theorem 14.10 in \cite{sato}, we know that 
\begin{equation*}
\Psi(\lambda)=- \int_{\S^{d-1}} |\langle \sigma^*\lambda,s\rangle|^\alpha \mu(ds),
\end{equation*}
where $\mu = C_{\alpha,d} \tilde{\mu}$ for a positive constant $C_{\alpha,d}$. The spherical measure $\mu $ is called the \textit{spectral measure} associated with $\nu $. We suppose that $\mu $ is non-degenerate in the sense of \cite{wata:07}:
 \begin{trivlist}
\item[\A{ND}]  There exists $\eta\ge1$  s.t. for all $\lambda\in \R^d $,
\begin{equation}
\label{EQ_ND}
\eta^{-1}|\lambda|^\alpha \le  \int_{\S^{d-1}} |\langle \lambda,s\rangle|^\alpha \mu(ds) \le \eta|\lambda|^\alpha.
\end{equation}
 \end{trivlist}
 We now introduce our (complete) \textit{Kolmogorov} operator in the following way. Let $\x=(\x_1,\cdots,\x_n)\in \R^{N}$, $n \ge 1$, where each $\x_i\in \R^{d_i} $, with $d_1 =d$,
$$
d_1 \ge d_2 \ge .... \ge d_n > 0, \;\;\; N = d_1 + d_2 + ....  + d_n.
$$
We define for a non-degenerate 
matrix $\sigma\in \R^d\otimes \R^d $  satisfying \A{UE} and $\varphi \in C_0^{\infty}(\R^{N}) $ the following operator:
\begin{equation}
\label{OP_SPAZIALE}
\L_\sigma \varphi (\x)=\langle A \x ,\nabla_\x \varphi(\x)\rangle+L_\sigma \varphi(\x), \;\;\;\; \x \in \R^N,
\end{equation}
where 
\begin {equation} \label{cia1}
L_\sigma \varphi(\x)=\int_{\R^d} \Big(\varphi(\x+B\sigma y)-\varphi(\x)- \nabla_{\x_1} \varphi(\x) \cdot \sigma y \I_{|y|\le 1}\Big)\nu(dy) =  \text{v.p.} \int_{\R^d} \Big(\varphi(\x+B\sigma y)-\varphi(\x)\Big)\nu(dy),
\end{equation}
and $B=\left(\begin{array}{c}
I_{d_1\times d}\\
0_{d_2\times d}\\
\vdots\\
0_{d_n\times d}
\end{array} \right) $ is the embedding matrix from $\R^d $ into $\R^N $.
Also, the matrix $A\in \R^{N}\otimes \R^N$ has the following structure  (cf. examples \eqref{KOLM_OP}, \eqref{r44} and \eqref{r44_COUPLED}):
\begin{eqnarray}\label{DEF_A}
A=\left( \begin{array}{ccccc}
 0_{d\times d}& \cdots& \cdots &\cdots  & 0_{d\times d_n}\\
A_{2,1} & 0_{d_2\times d_2} &\cdots &  \cdots &0_{d_2\times d_n}
\\
 0_{d_3\times d}& A_{3,2}& \ddots &  \cdots&0_{d_3\times d_n}\\
 0_{}&  \ddots   &\ddots& \ddots              & \vdots\\
 0_{d_n \times d}& \cdots & 0_{d_n\times d_{n-2}}              &A_{n,n-1}& 0_{d_n\times d_n}
 \end{array}
\right).
\end{eqnarray}
The only non-zero entries are the matrices $ \big( A_{i,i-1}\big)_{i\in \leftB 2,n\rightB} $\footnote{We use from now on the notation $\leftB \cdot,\cdot\rightB $ for integer intervals.}. We require that    $A_{i,i-1} \in \R^{d_i}\otimes \R^{d_{i-1}} $. Moreover we assume that
$A$ satisfies the following non-degeneracy condition of H\"ormander type:
\begin{trivlist}
\item[\A{H}] The $\big( A_{i,i-1}\big)_{i\in \leftB 2,n\rightB}$ are non-degenerate (i.e., each
$ A_{i,i-1}$ has rank $d_i$).
\end{trivlist}
 According to  \cite{lanc:poli:94} (see also \cite{bram:cerr:manf:96} and  \cite{dela:meno:10}) the previous conditions \A{UE} and \A{H} imply in the limit case $\alpha =2$, i.e., when $L_\sigma $ is a second order differential operator like $\frac{1}{2}{\rm Tr}(\sigma \sigma^* D^2_{\x_1})$, the well-known  H\"ormander's  hypoellipticity condition for ${\mathcal L}_{\sigma}$ \textit{involving $n-1$ commutators} starting from $\sigma D_{\x_1}$ and $\langle A \x ,\nabla_\x \cdot \rangle$  (cf. \cite{horm:67}). Precisely, our conditions on the matrix $A$ are the same as in \cite{bram:cerr:manf:96}. 
Note that in the case $\alpha =2 $    operators ${\mathcal L}_{\sigma}$ are also considered from the control theory point of view  (see \cite{BZ09}, \cite{LM16} and the references therein). 

  In our non-local  framework, assuming additionally \A{ND}, even though  no general H\"ormander theorem seems to hold (cf.  \cite{KoTa01})
the Markov semi-group associated with $ {\mathcal L}_\sigma$ admits a smooth density, see e.g. \cite{prio:zabc:09}.

We say that assumption \A{A} holds when \A{UE}, \A{ND} and \A{H} are in force. In the following, we denote by $C:=C(\A{A})\ge 1$ or $c:=c(\A{A})\ge 1$  a generic constant that might change from line to line and that depends on the parameters  of assumption \A{A}, namely  \nero{on $\alpha \in (0,2)$,} the non degeneracy constants $\kappa, \eta$ in \A{UE}, \A{ND} as well as those of \A{H} and the dimensions  $(d_i)_{i\in \leftB 1,n\rightB}$. Other specific dependences 
are explicitly specified. Let us shortly present some  examples of operators satisfying the previous assumptions \A{A} with $\sigma$ equal to identity:
\begin{trivlist}
\item[-] A \nero{basic} example is given by the following:
\begin{equation}
\label{KOLM_OP}
 {\mathcal L} = \, \Delta_{\x_1}^{\frac \alpha 2} + \x_1  \cdot \nabla_{\x_2},
\end{equation}
where $\x=(\x_1,\x_2) \in \R^{2d}$, $d\ge 1 $; this is the \textit{extension} to the non-local fractional setting of the celebrated Kolmogorov example, see \cite{kolm:34}, that inspired H\"ormander's hypoellipticity theory \cite{horm:67} in the diffusive case (in this case we have $N = 2d$, $n=2$ and $d_1 = d_2 =d$). \textcolor{black}{From the probabilistic viewpoint, ${\mathcal L} $ corresponds to the generator of the couple formed by an isotropic stable process and its integral. Such processes might appear in kinetics/Hamiltonian dynamics when considering the joint distribution of the velocity-position of stable driven particles} \textcolor{black} {(see e.g. \cite{tala:02} in the diffusive case or \cite{cush:park:klei:moro:05} for the non-local one in connection with the Richardson scaling law in turbulence)}.
  Non-local degenerate kinetic diffusion equations  appear as well as diffusion limits of linearized Boltzmann equations when the  equilibrium
distribution function is a heavy-tailed distribution (see \cite{mellet0}, \cite{mellet1} and \cite{rad}).
\item[-] 
Consider now for $d^*\in \N$:
\begin{equation} \label{r44}
{\mathcal L} = \Delta_{\x_1^1}^{\frac \alpha 2} + \Delta_{\x_1^2}^{\frac \alpha2} + 
\x_1^1 \cdot \nabla_{\x_2} + \x_2 \cdot  \nabla_{\x_3},
\end{equation}
for $ \x_1=(\x_1^1,\x_1^2)\in  \R^{2d^*}, \x=(\x_1,\x_2, \x_3)\in \R^{4d^*}$, i.e., $N = 4 d^*$,  $n =3$,  $d_1=2d^*$ and $d_2=d_3 =d^*$. If $d^*=1 $, the non-degenerate part of the above operator corresponds to the so-called cylindrical fractional Laplacian (in this case the  L\'evy measure $\nu$ is
 concentrated on $\{x_1^1=0\} \cup \{ x_1^2 =0 \}$, cf. e.g.    
 \cite{bass:chen:06}). 
\textcolor{black}{\item[-] A natural generalization of the previous example consists in considering:
\begin{equation} \label{r44_COUPLED}
{\mathcal L} = L_\sigma +  
\x_1^1 \cdot \nabla_{\x_2} + \x_2 \cdot  \nabla_{\x_3} ,\ L_\sigma \phi(x)=\sum_{i=1}^2 \text{v.p.} \int_{\R^{d^*}}\Big(\phi(x+\sigma_i z)-\phi(x) \Big)\frac{dz}{|z|^{d^*+\alpha}},
\end{equation}
  $\phi\in C_0^\infty(\R^{2d^*})$, $x\in \R^{2d^*}$, $\sigma_i \in \R^{2d*}\otimes \R^{d*}$ and $\sigma=\big( \sigma_1\ \sigma_2\big) $ verifies \A{UE}. The non-degenerate part $L_\sigma$ of ${\mathcal L}$ corresponds to  the generator of the $\R^{2d^*} $-valued  process ${\mathbf X}_t^1=\x_1+\sum_{i=^1}^2\sigma_i Z_t^i$, where the $(Z^i)_{i\in \leftB 1,2\rightB} $ are two independent isotropic stable processes of dimension $\R^{d*} $.
 The full operator ${\mathcal L} $ is  the generator of $({\mathbf X}_t^1,{\mathbf X}_t^2,{\mathbf X}_t^3)=({\mathbf X}_t^1, \int_0^t {\mathbf X}_s^{11}ds ,\int_0^t\int_0^s {\mathbf X}_u^{11} duds ) $ where for $u>0,\ {\mathbf X}^{11}_u $ denotes the first $d^*$ entries of ${\mathbf X}_u^1$. Such dynamics could  arise in finance, if we for instance assume that ${\mathbf X}^1$ models the evolution of a $2d^* $-dimensional asset, for which each component feels both noises $Z^1$ and $Z^2$ in $\R^{d^*} $ through the matrices $\sigma_1,\sigma_2$, but for which one would be interested in the distribution of the (iterated) averages of some marginals, in the framework of the associated Asian options, here $\big({\mathbf X}^2, {\mathbf X}^3 \big) $. We refer to \cite{baru:poli:vesp:01} in the diffusive setting for details.}
\end{trivlist}

In the case $N=nd,\ n>2 $, oscillator chains also naturally appear for the diffusive case in heat conduction models (see e.g. \cite{eckm:99} and \cite{dela:meno:10} for some related heat kernel estimates). The operators we consider here could also appear naturally in order to study anomalous diffusion phenomena within this framework.



To state our $L^p$ estimates, for  all $i\in \leftB 1,n \rightB $, we introduce   the orthogonal projection  $\pi_i : \R^N \to \R^{d_i}$, $\pi_i (\x) = \x_i$, $\x \in \R^N$. Then we  introduce the adjoint $B_i = \pi_i^* : \R^{d_i} \to \R^N$ (note that $B_1 = B$) and 
define 
 \begin{equation}
\label{DEF_ALPHA_I_B_I1}
 \alpha_i:=\frac{\alpha}{(1+(i-1)\alpha)}; \;\;\; \text{clearly we have} \,\; \alpha_1 = \alpha, \;\;  \;  \alpha_i\in  (0,1\wedge \alpha),\;\;\; i \in  \leftB 2,n\rightB.
 \end{equation} 
\begin{theorem}\label{THE_THM1}
Assume that \A{A} holds. Then, for $v\in C_0^\infty(\R^N) $ and $p\in (1,+\infty)$ there exists $c_p:=c_p(\A{A})$ s.t. for all $i\in \leftB 1,n\rightB $:
\begin{equation}\label{THE_CTR_LP_ELLIP}
\|\Delta_{\x_i}^{\frac {\alpha_i} {2} } v\|_{L^p(\R^N)}\le c_p \,  \|\L_\sigma v\|_{L^p(\R^N)},
\end{equation}
where in the above equation $\Delta_{\x_i}^{\frac{\alpha_i}{2}} $ denotes the $\R^{d_i}$-fractional Laplacian w.r.t. to the $\x_i$-variable, i.e.,
\begin{equation*}
\Delta_{\x_i}^{\frac{\alpha_i}{2}} v(\x) = \text{v.p.} \int_{\R^{d_i}} \big(
v (\x + B_i z) - v(\x)\big) \frac{dz}{|z|^{d_i+\alpha_i}}.
\end{equation*}
\end{theorem}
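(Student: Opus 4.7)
The plan is to represent $v$ via the semigroup $(P_t)_{t\ge 0}$ generated by $\L_\sigma$, recast $\Delta_{\x_i}^{\alpha_i/2} v$ as a singular integral, and then apply Calder\'on--Zygmund theory on a space of homogeneous type adapted to the H\"ormander dilations. Since $v\in C_0^\infty(\R^N)$ and the Markov process associated with $\L_\sigma$ disperses, we have $v=-\int_0^\infty P_t \L_\sigma v\, dt$. Writing $f:=\L_\sigma v$ yields the representation
\[
\Delta_{\x_i}^{\alpha_i/2} v(\x)=-\int_0^\infty \Delta_{\x_i}^{\alpha_i/2} P_tf(\x)\, dt=:T_if(\x),
\]
so the theorem reduces to proving that $T_i$ is bounded on $L^p(\R^N)$ for every $p\in(1,\infty)$.

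I would first obtain the $L^2$-bound by Fourier analysis. After transport along the deterministic flow $e^{tA}$, the L\'evy symbol of $\L_\sigma$ becomes $-\int_{\S^{d-1}}|\langle \sigma^*\pi_1 e^{sA^*}\lambda,s\rangle|^\alpha\mu(ds)$, so $T_i$ is a Fourier multiplier with symbol
\[
M_i(\lambda)=\int_0^\infty |\pi_i e^{tA^*}\lambda|^{\alpha_i}\exp\Big(-\int_0^t\!\!\int_{\S^{d-1}}|\langle \sigma^*\pi_1 e^{sA^*}\lambda,s\rangle|^\alpha \mu(ds)\, ds\Big) dt.
\]
Assumption \A{H}, combined with \A{UE} and \A{ND}, forces $|\sigma^*\pi_1 e^{sA^*}\lambda|^\alpha$ to dominate, as $s\to 0^+$, a sum of terms of the form $s^{(i-1)\alpha}|\pi_i\lambda|^\alpha$, one per level of the H\"ormander chain. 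The choice $\alpha_i=\alpha/(1+(i-1)\alpha)$ is tuned precisely so that the rescaling $t\mapsto t|\pi_i\lambda|^{-\alpha_i}$ makes $M_i$ uniformly bounded on $\lambda\ne 0$.

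To upgrade to general $p\in(1,\infty)$ I would write $T_if(\x)=\int_{\R^N} K_i(\x,\y)f(\y)\, d\y$ with $K_i(\x,\y)=-\int_0^\infty \Delta_{\x_i}^{\alpha_i/2} p_t(\x,\y)\, dt$, where $p_t$ is the transition density of the process generated by $\L_\sigma$, and verify the standard Calder\'on--Zygmund conditions with respect to the anisotropic quasi-metric associated with the dilations $\delta_t(\x_1,\ldots,\x_n)=(t^{1/\alpha_1}\x_1,\ldots,t^{1/\alpha_n}\x_n)$, which are the dilations compatible with the intrinsic scaling induced by $e^{tA}$. Off-diagonal bounds of stable-like type for $p_t$ and for $\Delta_{\x_i}^{\alpha_i/2} p_t$ with the correct homogeneity follow by Fourier inversion of $M_i$ together with integration by parts in $\lambda$; integration in $t$ then produces the size estimate for $K_i$.

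The main obstacle is the H\"ormander regularity condition
\[
\int_{d(\x,\y)\ge 2 d(\x,\x')}|K_i(\x,\y)-K_i(\x',\y)|\, d\y\le C,
\]
particularly in the degenerate directions $j\ge 2$, where $L_\sigma$ offers no direct smoothing. The remedy is to trade each $\x_j$-derivative against an iterated commutator with the transport field $\langle A\x,\nabla_\x\cdot\rangle$ encoded in \A{H}, ultimately transferring the derivative onto $\x_1$ where $L_\sigma$ does furnish stable-type regularity; each commutator step costs a power of $t$ that is absorbed precisely by the exponent $\alpha_i$. Once the kernel estimates are in place, the Calder\'on--Zygmund theorem on the homogeneous space $(\R^N,d,\mathrm{Leb})$ combined with the $L^2$-bound yields \eqref{THE_CTR_LP_ELLIP}.
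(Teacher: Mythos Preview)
Your overall architecture---a representation of $v$ through the semigroup, an $L^2$ bound via Fourier methods, then Calder\'on--Zygmund on a homogeneous space---is exactly the plan of the paper, but two of your steps do not work as written.

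First, $T_i$ is \emph{not} a Fourier multiplier. Because of the drift $\langle A\x,\nabla_\x\rangle$, the semigroup is not a convolution: one has
\[
\mathcal{F}\big(\Delta_{\x_i}^{\frac{\alpha_i}{2}} P_t f\big)(\lambda)=|\lambda_i|^{\alpha_i}\,\widehat f\big(e^{-tA^*}\lambda\big)\,\exp\Big(-\int_0^t\!\!\int_{\S^{d-1}}|\langle\sigma^*B^*e^{uA^*}\lambda,\theta\rangle|^\alpha\mu(d\theta)\,du\Big),
\]
so after integration in $t$ the argument of $\widehat f$ still depends on $t$. The expression you call $M_i(\lambda)$ \emph{is} bounded, and that bound is precisely the ingredient of the paper's Lemma~\ref{LEMME_L2}; but it enters only after Cauchy--Schwarz in $t$ and the change of variable $\q=e^{-tA^*}\lambda$, not as a multiplier symbol.

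Second, and this is the more serious gap, the purely spatial dilation metric on $\R^N$ is not adapted to the elliptic kernel $K_i(\x,\y)=\int_0^\infty\Delta_{\x_i}^{\alpha_i/2}p_\Lambda(t,\x,\y)\,dt$: since $p_\Lambda(t,\x,\y)$ depends on $e^{tA}\x-\y$, the kernel $K_i$ is not a function of $\x-\y$, and the H\"ormander condition with respect to a translation-invariant quasi-distance has no obvious reason to hold. The paper does not attempt Calder\'on--Zygmund on $\R^N$ at all. It proves instead a \emph{parabolic} estimate on the strip $\Sc=[-T,T]\times\R^N$ (Theorem~\ref{THM_STRISCIA}), applying Coifman--Weiss to the space--time kernel $k_i\big((t,\x),(s,\y)\big)=\Delta_{\x_i}^{\alpha_i/2}p_\Lambda(s-t,\x,\y)$ with the quasi-distance
\[
d\big((t,\x),(s,\y)\big)=\tfrac12\Big(\rho\big(t-s,e^{(s-t)A}\x-\y\big)+\rho\big(t-s,\x-e^{(t-s)A}\y\big)\Big),
\]
which incorporates the transport $e^{(s-t)A}$ explicitly. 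The elliptic statement is then obtained by plugging $u(t,\x)=v(\x)\psi(t/T)$ into the parabolic bound and sending $T\to\infty$. Finally, the deviation estimate (Lemma~\ref{LEMME_DEV}) is not proved by trading derivatives for commutators; it comes from explicit pointwise bounds on $\Delta^{\frac{\alpha_i}{2},A,i,t}p_S$ (Lemma~\ref{SENS_SING_STAB}), derived through an It\^o--L\'evy decomposition of the auxiliary non-degenerate stable process $S$ into small and large jumps at the characteristic scale $t^{1/\alpha}$.
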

The above result still holds in the \nero{\it diffusive limit case,} i.e. when $\alpha =2 $ and $L_\sigma=\frac 12 {\rm Tr}(\sigma\sigma^* D_{\x_1}^2) $ is a  second order non-degenerate differential operator in $\x_1 $. This limit case is specifically addressed in Appendix \ref{EST_LOCAL_CASE} below. 
The theorem is  obtained as a consequence of  an $L^p$-parabolic  regularity result of independent interest (see Theorem \ref{THM_STRISCIA}).  Let us comment separately the case $i=1$ (i.e., $\Delta_{\x_1}^{\frac {\alpha_1} {2} } = \Delta_{\x_1}^{\frac {\alpha} {2} }$) and $i =2, \ldots, n$.

The case $i = 2, \ldots, n$ is new even if we consider  the local   case of $\alpha =2$. In that case,   \cite{FL04} considers similar estimates  for $p=2$ in non-isotropic fractional Sobolev spaces with respect to an invariant Gaussian measure assuming that 
 the    hypoelliptic Ornstein-Uhlenbeck operator $\mathcal{L}_{\sigma}$
admits such invariant measure. Also, in the special kinetic case \eqref{KOLM_OP}, the $L^p$-estimate for $\Delta_{\x_2}^{\frac{\alpha}{1+\alpha}} v,\ \alpha \in (0,2] $ can be derived from  Bouchut \cite{Bo02}. In this  work general estimates on the degenerate variable are derived from the transport equation $(\partial_t + \x_1 \cdot \nabla_{\x_2} )u=f$ assuming a priori regularity of $u$ in the non-degenerate variable (i.e. in the current setting  an integrability condition on $ \Delta_{\x_1}^{\frac\alpha 2}u$ or more generally on $L_\sigma u$ with $L_\sigma $ as in \eqref{NON_DEG_OP}) and integrability conditions on $f$. 

The approach we develop here, naturally based on the theory of singular integrals on homogeneous spaces (since we have dilation properties for the operator) allows to derive directly the estimates on \textit{all} the variables exploiting somehow the regularizing properties of the underlying singular kernels. On the other hand, in light of our  theorems, it seems a natural open problem to extend Bouchut's estimates  to the more general transport equation 
  $$(\partial_t + A\x \cdot \nabla_{\x} )u=f $$ 
  with  $A$ as  in \eqref{DEF_A}, 
   when more than one commutator is needed to span the space, assuming   $\Delta_{\x_1}^{\frac {\alpha} {2} } u \in L^p$.

Our estimate \eqref{THE_CTR_LP_ELLIP} in the case $i=1$ 
can be viewed as a non-local  extension of the results by  \cite{bram:cupi:lanc:prio:10}. 
Indeed, the quoted work concerns with  estimates similar to \eqref{THE_CTR_LP_ELLIP} with $i=1$ for a \textit{diffusion} operator, i.e. the limit case  $\alpha=2 $ when $\Delta_{\x_1}^{\frac \alpha 2} $ corresponds to $\Delta_{\x_1}^{} $ and $L_\sigma$  to a second order differential operator
 like $Tr(\sigma \sigma^* D^2_{\x_1})$. 
In this framework, the authors obtained estimates of the following type:
\begin{equation}
\label{cald}
\| D_{\x_1}^ 2 v\|_{L^p(\R^N)}
\le c_p\Big( \|\L_\sigma v\|_{L^p(\R^N)}+ \|v\|_{L^p(\R^N)}\Big),
\end{equation}
(here $D_{\x_1}^2 v$ is the Hessian matrix of $v$ with respect to the $\x_1$-variable).  Note that
 by
the classical Calder\'on-Zygmund theory: $\| D_{\x_1}^ 2 v\|_{L^p(\R^N)} $ $ \le C_p \| \Delta_{\x_1} v\|_{L^p(\R^N)}$. Hence \eqref{THE_CTR_LP_ELLIP} for $i=1$ and $\alpha =2$ can be reformulated as 
$$
\| D_{\x_1}^ 2 v\|_{L^p(\R^N)} \le  c_p \,  \| \L_\sigma v \|_{L^p(\R^N)}.
$$
 Note that \eqref{cald} has an extra term $\|v\|_{L^p(\R^N)}$ on the right-hand side. This is due to the fact that 
  our structure of the matrix $A$ is more restrictive than in \cite{bram:cupi:lanc:prio:10} and \cite{FL04} (cf Remark \ref{chen}). On the other hand,  by our assumptions on $A$  we can use  the theory of \textit{homogeneous spaces} with the doubling property as in \cite{coif:weis:71}.

In contrast with previous works (see in particular   \cite{bram:cerr:manf:96} and \cite{bram:cupi:lanc:prio:10}),  we do not use here an underlying Lie  group structure. Another difference is that     we are able to prove directly the important $L^2$-estimates (see Lemma \ref{LEMME_L2}).
  This is why in contrast \nero{with \cite{bram:cerr:manf:96} } 
we can entirely rely on the Coifmann-Weiss setting \cite{coif:weis:71}. 
  

  Let us mention that results similar to 
 Theorem \ref{THE_THM1} have been obtained in \cite{CZ16} in the special kinetic case of \eqref{KOLM_OP}. 
  Their strategy is totally different and relies on the 
  Fefferman and Stein  approach \cite{feff:stei:72} for the non-degenerate variable. The estimate for the degenerate one
  is then derived from Bouchut's estimates \cite{Bo02}. 
We avoid here diagonal terms and time dependence in $A$ in \eqref{DEF_A}  for simplicity (see the extensions discussed in Appendix A of the preliminary preprint version  \cite{huan:meno:prio:16}).
 With zero diagonal, considering non-degenerate time dependent coefficients in \eqref{DEF_A}  would not change the analysis but make the notations in Section \ref{HOMO} below more awkward.
 Adding diagonal terms, even time-homogeneous,   would lead to time dependent constants in our parabolic estimates of Theorem \ref{THM_STRISCIA}.
Anyhow, introducing additional strictly  super-diagonal elements in $A$
breaks the homogeneity (see Section \ref{HOMO} and Appendix A of \cite{huan:meno:prio:16} for details). This seemingly small modification actually induces to consider estimates from harmonic analysis in non-doubling
spaces developed in a rather abstract setting by \cite{bram:10}. This is precisely the approach developed in \cite{bram:cupi:lanc:prio:10}. 
Handling a general matrix $A$ in the current framework will concern further research.
We finally point out that our approach also permits to recover the estimates of the non-degenerate case, i.e. when $n=1,N=d$ and $A=0$.


The article is organized as follows. We first discuss in Section \ref{HOMO} the appropriate homogeneous framework, depending on the index $\alpha \in (0,2]$, needed for our analysis. 
Importantly, 
we manage to express the fundamental solution of  $\partial_t-{\mathcal L}_{\sigma}$, 
in terms of the density at time $t>0$ of a 
non-degenerate stable process $(S_t)_{t \ge 0 }$ on $\R^N$ rescaled according to the homogeneous scales (see Proposition \ref{THE_PROP_FUND}) 
\footnote{The stable process $S$ is non-degenerate in the sense that its spectral measure $\mu_S$ satisfies \eqref{EQ_ND} on $\R^N $.
However, we point out that $\mu_S $ can be very singular w.r.t. to the surface measure of $\S^{N-1} $.}.
We eventually state our second main result Theorem \ref{THM_STRISCIA}, which is the parabolic version of Theorem \ref{THE_THM1}, and actually permits to prove Theorem \ref{THE_THM1}.

 We then describe in Section \ref{STRAT_PROOF} the strategy of the proof of Theorem \ref{THM_STRISCIA} 
relying on the theory  of  Coifman and Weiss 
(see Appendix \ref{APP_CW}).  
Section \ref{THE_TEC_SEC} is dedicated  to the key estimates ($L^2 $ bound and controls of the singular kernels) needed for the proof of our main results. This is the technical core of this paper.  Rewriting the fundamental solution of $\partial_t- {\mathcal L}_{\sigma}$ in terms of  the  density of the rescaled  stable process $(S_t)_{t \ge 0 }$  allows to analyse the terms $(\| \Delta_{\x_i}^{\frac {\alpha_i} {2} } v\|_{L^p(\R^N)})_{ i\in \leftB 1,n\rightB} $, through singular integral analysis techniques that  exploit the integrability properties of any non-degenerate stable density, 
no matter how singular the spectral measure is (see Lemma \ref{SENS_SING_STAB}). We use 
in Section  \ref{KEY_SECTION} the estimates of Section \ref{THE_TEC_SEC} in order to fit our specific framework following the strategy of Section \ref{STRAT_PROOF}. 
Some technical points  are proved for the sake of completeness in Appendix \ref{APP_TEC}. \textcolor{black}{The specific features associated with the limit local case $\alpha=2 $ are discussed in Appendix \ref{EST_LOCAL_CASE}}. 
 Finally, \textcolor{black}{as indicated at the beginning of the introduction}, we mention that possible applications of our estimates to well-posedness of  martingale problems for operators like 
$${\mathcal L}_{\sigma(\x)}\varphi(\x)=\langle A\x, \nabla_\x \varphi(\x)\rangle+L_{\sigma(\x)}\varphi(\x)
$$
will be a subject of a future work (cf. Appendix A in \cite{huan:meno:prio:16} for preliminary results in this direction).


\mysection{Homogeneity Properties}
\label{HOMO}

The key point to establish the estimates in Theorem \ref{THE_THM1} consists in first considering the parabolic setting. To this end, 
we consider the evolution operator defined for 
$\psi\in C_0^{\infty}(\R^{1+N}) $ by: 
\begin{equation}
\label{OP_PARAB}
{\mathscr L}_\sigma \psi(t,\x):= (\partial_t+{\mathcal L}_\sigma 
 ) \psi(t,\x),\ (t,\x)\in \R^{1+N}.
\end{equation}
The following proposition is fundamental and follows from the structure of $A$ and  $L_\sigma $ under \A{A}.
\begin{proposition}[Invariance by dilation]\label{PROP_INV_DIL}
Let \A{A} be in force and $u\in C_0^\infty(\R^{1 + N})$ solve the equation $ {\mathscr L}_\sigma u(t,\x)=0, (t,\x)\in 
\R^{1 + N}$. Then, for any $\delta >0 $, the function $u_\delta(t,\x):=u(\delta^\alpha t, \delta \x_1, \delta^{1+\alpha}\x_2,\cdots, \delta^{1 + (n-1)\alpha}\x_n) $ also solves $ {\mathscr L}_\sigma u_\delta(t,\x)=0, (t,\x)\in  \R^{1 + N}$.
\end{proposition}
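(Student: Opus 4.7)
The plan is to prove the invariance directly by computing $\mathscr{L}_\sigma u_\delta(t,\mathbf{x})$ term-by-term and verifying that each piece factors out a common $\delta^\alpha$ with $\mathscr{L}_\sigma u$ evaluated at the dilated point $(t',\mathbf{x}') := (\delta^\alpha t,\delta \mathbf{x}_1,\delta^{1+\alpha}\mathbf{x}_2,\ldots,\delta^{1+(n-1)\alpha}\mathbf{x}_n)$. The dilation exponents $1+(i-1)\alpha$ for the $\mathbf{x}_i$-variables and $\alpha$ for time have been tailored exactly so that this works; the proof is really a sanity check that \textbf{(A)}, the block-subdiagonal structure of $A$ in \eqref{DEF_A}, and the $\alpha$-stability of $\nu$ match the chosen scales.

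First, by the chain rule, $\partial_t u_\delta(t,\mathbf{x}) = \delta^\alpha (\partial_t u)(t',\mathbf{x}')$ and $\nabla_{\mathbf{x}_i} u_\delta(t,\mathbf{x}) = \delta^{1+(i-1)\alpha}(\nabla_{\mathbf{x}_i}u)(t',\mathbf{x}')$. Using the structure \eqref{DEF_A}, the drift takes the form $\langle A\mathbf{x},\nabla_\mathbf{x} u_\delta\rangle(t,\mathbf{x}) = \sum_{i=2}^n A_{i,i-1}\mathbf{x}_{i-1}\cdot \nabla_{\mathbf{x}_i} u_\delta(t,\mathbf{x}) = \sum_{i=2}^n \delta^{1+(i-1)\alpha} A_{i,i-1}\mathbf{x}_{i-1}\cdot (\nabla_{\mathbf{x}_i}u)(t',\mathbf{x}')$. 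Writing $\delta^{1+(i-1)\alpha} = \delta^{\alpha}\cdot \delta^{1+(i-2)\alpha}$ and using $\mathbf{x}'_{i-1} = \delta^{1+(i-2)\alpha}\mathbf{x}_{i-1}$, this equals $\delta^\alpha \langle A\mathbf{x}',\nabla_{\mathbf{x}'} u\rangle(t',\mathbf{x}')$. This is the key compatibility: the jump in the power $1+(i-1)\alpha - (1+(i-2)\alpha) = \alpha$ is exactly what the time scaling produces, and the diagonal being zero in \eqref{DEF_A} is precisely what guarantees no conflicting requirements on the exponents.

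For the non-local term, observe that $B\sigma y$ has non-zero entries only in the $\mathbf{x}_1$-slot, so $u_\delta(t,\mathbf{x}+B\sigma y) = u(t',\mathbf{x}'+B\sigma(\delta y))$. The change of variables $z=\delta y$ in \eqref{cia1}, combined with the $\alpha$-homogeneity of the stable Lévy measure $\nu$ from \eqref{STABLE_MEAS} (which gives $\nu(\delta^{-1}\cdot) = \delta^\alpha \nu(\cdot)$ in distribution, since $\rho = z/\delta$ produces a factor $\delta^\alpha$ out of $\rho^{-(1+\alpha)}d\rho$), yields
\begin{equation*}
L_\sigma u_\delta(t,\mathbf{x}) \;=\; \delta^\alpha\,\mathrm{v.p.}\!\int_{\R^d}\bigl(u(t',\mathbf{x}'+B\sigma z)-u(t',\mathbf{x}')\bigr)\nu(dz) \;=\; \delta^\alpha (L_\sigma u)(t',\mathbf{x}').
\end{equation*}
The use of the principal value form of \eqref{cia1} is needed to avoid having to separately handle the compensator $\I_{|y|\le 1}$, which would otherwise be rescaled to $\I_{|z|\le \delta}$ under the change of variables; symmetry of $\nu$ (assumption \textbf{(A)}) makes this legitimate.

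Summing the three contributions gives $\mathscr{L}_\sigma u_\delta(t,\mathbf{x}) = \delta^\alpha (\mathscr{L}_\sigma u)(t',\mathbf{x}') = 0$, which is the claim. There is no real obstacle; the only thing that needs care is the symmetry-based justification for replacing the compensated integral by a principal value, and the bookkeeping of the exponents $1+(i-1)\alpha$, but both are immediate from the hypotheses.
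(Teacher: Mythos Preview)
Your proof is correct and follows exactly the same approach as the paper: both establish the identity $\mathscr{L}_\sigma u_\delta(t,\x)=\delta^\alpha(\mathscr{L}_\sigma u)(t',\x')$ and conclude. The paper merely asserts this identity as ``easily checked,'' whereas you actually carry out the term-by-term verification (time derivative, drift, non-local part) and correctly flag the role of symmetry of $\nu$ in passing to the principal-value form; so your argument is a fleshed-out version of the paper's one-line proof.
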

\begin{proof}
It is actually easily checked that: 
$${\mathscr L}_\sigma u_\delta( t,\x
)=\delta^\alpha \big({\mathscr L}_\sigma u(\z_\delta) \big)\big|_{\z_\delta=(\delta^\alpha t, \delta \x_1, \delta^{1+\alpha}\x_2,\cdots, \delta^{1+ (n-1)\alpha}\x_n)}.$$
Since for all $\z\in  \R^{1+ N},\ {\mathscr L}_\sigma u(\z)=0 $, the result follows.
\end{proof}
The previous proposition leads us to define the following \textit{homogeneous} norm (cf. \cite{bram:cerr:manf:96} or \cite{bram:cupi:lanc:prio:10}):
\begin{definition}[Homogeneous Pseudo-Norm] Let $\alpha \in (0,2].$
We define for all  $(t,\x)\in \R^{1+N} $ the pseudo-norm:
\begin{equation}\label{rho_homo}
\rho 
(t,\x) = |t|^{\frac{1}{\alpha}} + \sum_{i=1}^{n} |\x_i|^{\frac{1}{1+\alpha(i-1)}}.
\end{equation}
\end{definition}
\begin{remark}
\label{REM_HOMO}
Let us observe that $\rho$ is \textit{not} a norm because the \textit{homogeneity} property fails, i.e. for $\delta>0, \rho(\delta t,\delta \x)\neq \delta \rho(t,\x)  $. Actually, the homogeneity appears through the dilation operator of Proposition \ref{PROP_INV_DIL}. Namely, for $\z=(t,\x)\in \R^{1+N} $ and $\z_{\delta}= (\delta^\alpha t, \delta \x_1, \delta^{1+\alpha}\x_2,\cdots, \delta^{1+ (n-1)\alpha}\x_n)$ we indeed have:
$\rho(\z_\delta)=\delta \rho(\z).$

The dilation operator can also be rewritten using the \textit{scale} matrix $\M_t$  in \eqref{DEF_T_ALPHA}. Namely, 
$\z_\delta= \Big( \delta^\alpha t, \delta \M_{\delta^{\alpha}} \x \Big)$.
\end{remark}

\begin{remark}[Regularity of ${\mathcal L}_{\sigma}v$]
\label{reg1} 
 If $v \in C_0^{\infty} (\R^N)$ it is not difficult to prove that ${\mathcal L}_{\sigma}v \in C^{\infty} (\R^N)$. Moreover, ${\mathcal L}_{\sigma}v$ and all its partial derivatives belong to $\cap_{p \in [1, \infty]}L^p (\R^N)$.

 We only check that, when $p \in [1, +\infty)$,  ${\mathcal L}_{\sigma}v \in L^p(\R^N)$.
 Since $\langle A \x ,\nabla_\x v \rangle \in L^p(\R^N)$ we concentrate on $L_\sigma v$ (see \eqref{cia1}). We can write
\begin{gather*}
L_\sigma v = v_1 + v_2,
 \;\;\;
v_1(\x) = \int_{|y|\le 1} \frac{ \big(v(\x+B\sigma y)-v(\x)- \nabla_{\x_1} v(\x) \cdot \sigma y \big)} {|y|^2} |y|^2\nu(dy),
\\
v_2(\x) = \int_{|y| >1} \big(v(\x+B\sigma y)-v(\x) \big)\nu(dy),
 \;\; \x \in \R^N.
\end{gather*}
We easily obtain that $v_1 \in C_0^{\infty} (\R^N)$. Moreover, from the H\"older inequality and the Fubini theorem, we get
$$
\int_{\R^N} |v_2(\x)|^p d\x \le c  \int_{|y| >1}  \nu(dy)  \int_{\R^N} |v(\x+B\sigma y)-v(\x) |^p d\x < \infty.
$$ 
\end{remark}
We now study  the homogeneous framework 
 of
the Ornstein-Uhlenbeck process  $(\Lambda_t)_{t\ge 0} $ satisfying the stochastic differential equation (SDE):
\begin{equation}\label{EDS_OU}
d\Lambda_t = A\Lambda_t dt+ B \sigma dZ_t, \;\;  \; \Lambda_0=\x\in \R^N,\\
\end{equation}
where $B$
again stands for the embedding matrix from $\R^d $ (the space where the noise lives) into $\R^N $.
 Here $\big(Z_t\big)_{t\ge 0}$ is a stable $\R^d$-dimensional process with L\'evy measure $\nu$ defined on some complete probability space $(\Omega,\F,\P) $.
For a given starting point $\x\in \R^N$, the above dynamics explicit integrates and gives:
\begin{equation}
\label{INTEGRATED_OU}
\Lambda_t^\x=\exp\big(tA\big)\x+\int_0^t \exp\big((t-s)A\big) B\sigma dZ_s.
\end{equation}
It is readily derived from \cite{prio:zabc:09} that, for $t>0$ the random variable $\Lambda_t$ has a density $p_{\Lambda}(t,\x,\cdot) $ w.r.t. the Lebsegue measure of $ \R^N$. Additionally, we derive in \eqref{THE_DENS_OU} of Proposition \ref{THE_PROP_FUND} below (similarly to Proposition 5.3 of \cite{huan:meno:15} which is stated in a more general time-dependent coefficients framework) that 
\begin {equation} \label{CORRESP_L_S}
p_{\Lambda}(t,\x, \y) =\frac{1}{\det(\M_t)}p_{S}\big(t,(\M_t)^{-1}( e^{tA}\x-\y)\big),\ t>0, 
\end{equation}
where the diagonal matrix
\begin{equation}
\label{DEF_T_ALPHA}
\M_t=\left( \begin{array}{cccc}
I_{d\times d}& 0_{d\times d_2}& \cdots&0_{d\times d_n}\\
0_{d_2\times d}   &tI_{d_2\times d_2}&0_{}& \vdots\\
\vdots & \ddots&\ddots & \vdots\\
0_{d_n\times d}& \cdots & 0_{}& t^{n-1}I_{d_n \times d_n}
\end{array}\right),
\end{equation}
gives the \textit{multi-scale} characteristics of the density of $\Lambda_t $ and
$(S_t)_{t\ge 0}$ is a stable process in $\R^N$ whose  L\'evy measure 
$\nu_{S}$ though having a very singular spherical part, satisfies the non-degeneracy assumption \A{ND} in $\R^N$. From a more analytical viewpoint the entries of $t^{\frac{1}{\alpha}}\M_t $, which correspond to the typical scales of a stable process and its iterated integrals, provide the underlying homogeneous structure. 

\nero{   
To prove our results 
we will often use  the following rescaling identity
\begin{equation} \label{str}
e^{rA} = \M_r  e^{A} \M_r^{-1}, \;\;\; r>0, 
\end{equation}
and its adjoint form $e^{rA^*} = \M_r^{-1}  e^{A^*} \M_r $.
To get \eqref{str} we first check directly  that $ \M_r  {A} \M_r^{-1} = r A$, $r>0$; then we have the assertion writing 
$$
e^{rA} =  I + rA + \frac{r^2}{2} A^2 + \ldots  
= 
I + \M_r  {A} \M_r^{-1} + \frac{ 1 
}{2} \M_r  {A} \M_r^{-1} \M_r  {A} \M_r^{-1} + \ldots. 
$$
}
The next  result is crucial for our main estimates. 
\begin{proposition}[Density and Fundamental Solution] 
\label{THE_PROP_FUND}
Under \A{A}, the process $(\Lambda_t^\x)_{t\ge 0} $ defined in \eqref{EDS_OU} has for all $t>0$ and starting point $\x\in \R^N $ a $C^\infty(\R^N)$-density $p_\Lambda (t,\x,\cdot) $ that writes for all $\y\in \R^N $:
\begin{eqnarray}
\label{THE_DENS_OU}
 p_\Lambda(t,\x,\y)=\\
 \frac{\det(\M_t^{-1})}{(2\pi)^N}\int_{\R^{N}} \exp\Big(-i\langle \M_t^{-1}(\y-\exp(t A)\x),\p\rangle \Big)\exp\Big(-t\int_{\S^{N-1}}|\langle \p, \bxi\rangle|^\alpha \mu_S(d\bxi)\Big)d\p,\nonumber
\end{eqnarray}
where $\M_t$ is defined in \eqref{DEF_T_ALPHA} and $\mu_S $ is a symmetric spherical measure on $\S^{N-1}$ satisfying the non-degeneracy condition \A{ND} on $\R^N $ instead of $\R^d $.

The analytical counterpart of this expression is that the operator $\partial_t - {\mathcal L}_\sigma $ has a fundamental solution given by $p_\Lambda \in C^\infty(\R_+^*\times \R^{2N}) $. Also, for every $u\in C_0^\infty(\R^{1+N})$, the following representation formula holds:
\begin{equation}
\label{REP}
u(t,\x) 
= - \int_{t}^{\infty} ds\int_{\R^N} p_\Lambda(s-t,\x,\y){\mathscr L}_\sigma u(s,\y)d\y, \;\;  (t,\x) \in \R^{1+N}.
\end{equation}
\end{proposition}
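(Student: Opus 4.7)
The plan is to compute the characteristic function of $\Lambda_t^\x$ directly from the integrated form \eqref{INTEGRATED_OU}, then exploit the scaling identity \eqref{str} to extract the $t$-dependence through the matrix $\M_t$. Since $\Lambda_t^\x - e^{tA}\x = \int_0^t e^{(t-s)A} B\sigma\, dZ_s$ is a stochastic integral of a deterministic matrix-valued function against the $\R^d$-valued stable process $Z$, the L\'evy-Khintchine formula together with Theorem 14.10 of \cite{sato} gives
$$\E\bigl[ e^{i\langle \p, \Lambda_t^\x \rangle}\bigr] = \exp\Bigl( i\langle \p, e^{tA}\x\rangle - \int_0^t\!\int_{\S^{d-1}} |\langle \sigma^* B^* e^{(t-s)A^*}\p, s\rangle|^\alpha \mu(ds)\, ds\Bigr).$$
Setting $\p = \M_t^{-1}\q$ and invoking the adjoint scaling $e^{rA^*}=\M_r^{-1} e^{A^*}\M_r$ together with $\M_t^{-1}\M_{t-s}=\M_{(t-s)/t}$ (both diagonal) and $B^*\M_{t-s}^{-1}=B^*$ (since $B$ is supported on the first block, where $\M_{t-s}^{-1}$ is the identity), the substitution $u = s/t$ yields
$$\E\bigl[ e^{i\langle \q, \M_t^{-1}(\Lambda_t^\x - e^{tA}\x)\rangle}\bigr] = \exp\Bigl(- t \int_0^1\!\int_{\S^{d-1}} |\langle \q, \M_{1-u} e^A B\sigma s\rangle|^\alpha \mu(ds)\, du\Bigr).$$
Writing $\M_{1-u} e^A B\sigma s$ in polar form and pushing forward $du\otimes\mu(ds)$ onto $\S^{N-1}$ with weight equal to the $\alpha$-th power of its radial part (then symmetrizing $\bxi\to -\bxi$, which is harmless since $|\langle\q,\cdot\rangle|^\alpha$ is even) produces a finite symmetric spherical measure $\mu_S$ for which the exponent equals $-t\int_{\S^{N-1}}|\langle \q,\bxi\rangle|^\alpha\mu_S(d\bxi)$; Fourier inversion and the change of variables $\z\mapsto \M_t\z$ then yield \eqref{THE_DENS_OU}.

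The main obstacle is showing that $\mu_S$ satisfies \A{ND} on $\R^N$, which is exactly where the H\"ormander-type condition \A{H} enters. Using \A{UE} together with \A{ND} for $\mu$ on $\R^d$, this reduces to the lower bound
$$\int_0^1 |B^* e^{A^*} \M_{1-u}\q|^\alpha\, du \ge c|\q|^\alpha, \qquad \q\in \R^N.$$
Since $A^*$ is strictly upper block-triangular with the only non-zero blocks being the $A_{i,i-1}^*$ mapping $\R^{d_i}$ to $\R^{d_{i-1}}$, a direct Taylor expansion of $e^{A^*}$ gives
$$B^* e^{A^*} \M_{1-u}\q = \sum_{k=0}^{n-1} \frac{(1-u)^k}{k!}\, A_{2,1}^* A_{3,2}^* \cdots A_{k+1,k}^*\, \q_{k+1},$$
an $\R^d$-valued polynomial in $v:=1-u$ of degree at most $n-1$. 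Under \A{H} each $A_{i,i-1}^*$ is injective, so the norm of the $k$-th coefficient is comparable to $|\q_{k+1}|$, and the desired bound follows from the equivalence of the $L^\alpha(0,1)$-norm with the coefficient sup-norm on the finite-dimensional space of such polynomials. The matching upper bound is immediate, since $\mu_S$ is by construction a finite measure.

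Non-degeneracy of $\mu_S$ forces the integrand in \eqref{THE_DENS_OU} to decay like $e^{-ct|\p|^\alpha}$, so every $\p$-derivative is integrable and $p_\Lambda\in C^\infty(\R_+^*\times \R^{2N})$; differentiating under the integral and re-using \eqref{str} shows that $(\partial_t - {\mathcal L}_\sigma)p_\Lambda(\cdot,\cdot,\y)=0$ on $\R_+^*\times \R^N$. For the representation formula \eqref{REP}, fix $v\in C_0^\infty(\R^{1+N})$ and pick $T$ large enough that $v(T,\cdot)\equiv 0$. By Remark \ref{reg1}, ${\mathscr L}_\sigma v$ is bounded and rapidly decaying, so It\^o's formula applied to $s\mapsto v(s,\Lambda_s^\x)$ on $[t,T]$, followed by taking expectations (the local martingale part has zero mean thanks to the compact support of $v$), gives
$$-v(t,\x) = \int_t^T \E\bigl[ {\mathscr L}_\sigma v(s,\Lambda_s^\x)\bigr]\, ds = \int_t^T\!\int_{\R^N} p_\Lambda(s-t,\x,\y){\mathscr L}_\sigma v(s,\y)\, d\y\, ds,$$
which is \eqref{REP} upon letting $T\to\infty$.
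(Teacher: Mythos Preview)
Your argument is correct and follows the same overall architecture as the paper: compute the characteristic function of $\Lambda_t^\x$ from \eqref{INTEGRATED_OU}, use the scaling identity \eqref{str} to factor out $\M_t$, push the measure $du\otimes\mu$ forward to $\S^{N-1}$ with the $\alpha$-homogeneous radial weight and symmetrize to obtain $\mu_S$, and then Fourier invert; the representation \eqref{REP} via It\^o's formula is likewise the paper's route.

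The one place where you genuinely diverge is the non-degeneracy of $\mu_S$. The paper defers this to Lemma~\ref{bound}, which argues by compactness of $\S^{N-1}$, the Kalman rank condition \eqref{rank}, and a comparison with the Gaussian controllability Gramian (bounding $|\cdot|^\alpha$ below by a multiple of $|\cdot|^2$ on a bounded set and then invoking positive definiteness of $\int_0^1 e^{vA}B\sigma\sigma^*B^*e^{vA^*}\,dv$). Your approach is more direct: you compute $B^* e^{A^*}\M_{1-u}\q$ explicitly as the $\R^d$-valued polynomial $\sum_{k=0}^{n-1}\frac{(1-u)^k}{k!}A_{2,1}^*\cdots A_{k+1,k}^*\,\q_{k+1}$, observe that injectivity of each $A_{i,i-1}^*$ (equivalent to \A{H}) makes the $k$-th coefficient comparable to $|\q_{k+1}|$, and then invoke equivalence of norms on the finite-dimensional space of degree-$(n-1)$ polynomials to get the $L^\alpha(0,1)$ lower bound. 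This is cleaner and avoids the detour through the Gaussian case; the paper's route has the advantage of making the link with the classical Kalman criterion explicit. Both are valid. (One cosmetic point: you use $s$ simultaneously for the time variable in $[0,t]$ and for the sphere variable in $\S^{d-1}$; the paper uses $\s$ for the latter.)
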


\begin{remark}[A useful identity in law]
\label{ID_LAW}
Let $(S_t)_{t\ge 0}$ be  a (unique in law)  $\R^N $-valued \nero{symmetric $\alpha$-stable} process with spectral measure $\mu_S $ defined on 
 $(\Omega, {\mathcal F}, \P)$, i.e., 
$$
\E [e^{i \langle \p , S_t \rangle}] = \exp\big(-t\int_{\S^{N-1}}|\langle \p, \bxi\rangle|^\alpha \mu_S(d\bxi)\big), \;\; t \ge 0, \; \p \in \R^N.
$$
\nero { By the previous result  we know that $S_t$ 
has a $C^{\infty}$-density $p_S(t,\cdot) $ for $t>0$. Note that $S_t  \overset{({\rm law})}{=} t^{\frac 1\alpha} S_1$ which is equivalent to 
$ p_S(t,\x) $ $= {t^{-\frac N\alpha}} \, p_S (1, t^{-\frac 1\alpha} \x),$ $ t>0, \, $ $\x \in \R^N.$ 
 Moreover,  \eqref{CORRESP_L_S} holds} and is equivalent to \eqref{THE_DENS_OU}.
In a more probabilistic way, this means that for any fixed $t>0$ the following identity in law holds: 
\begin{equation}
\label{d33}
\Lambda_t^\x\overset{({\rm law})}{=}\exp(t A)\x+\M_t S_t.
\end{equation}
\end{remark}


\begin{proof}[Proof of Proposition \ref{THE_PROP_FUND}]
Let $t\ge 0  $ be fixed.
Observe first that for a given $m\in \N$ considering the associated uniform partition $\Pi^m:=\{ (t_i:=\frac{i}{m}t)_{i\in \leftB 0,m \rightB}\}$ of $[0,t] $ yields for all $\p\in \R^{N}$:
\begin{eqnarray*}
\E\Big[\exp\Big(i\langle \p,\sum_{i=0}^{m-1} \exp\big((t-t_i)A\big) B\sigma(Z_{t_{i+1}}-Z_{t_i})\rangle \Big)\Big]\\
=\exp\Big(-\frac 1m\sum_{i=0}^{m-1}\int_{\S^{d-1}}|\langle \sigma^*B^*\exp\big( (t-t_i)A^*\big) \p, \s\rangle|^\alpha \mu(d\s)\Big).
\end{eqnarray*}
Thus, by the dominated  convergence theorem, one gets from \eqref{INTEGRATED_OU} that the characteristic function or Fourier transform of $\Lambda_t^\x $ writes for all $\p\in \R^N $: 
\begin{eqnarray}
\label{TF}
 \varphi_{\Lambda_t^{\x}}(\p)&:=&\E(e^{i \langle \p , \Lambda_t^{\x} \rangle})
=  \exp\left(i\langle \p,\exp\big( tA\big)\x\rangle -\int_0^t \int_{\S^{d-1}} |\langle \exp(u A^*) \p,  B \sigma \s\rangle|^\alpha \mu(d\s)du\right)\nonumber\\ 
&&=\exp\left(i\langle \p,\exp\big( tA\big)\x\rangle -t \int_0^1 \int_{\S^{d-1}} |\langle   \p, \exp(vt A)B \sigma \s\rangle|^\alpha \mu(d\s)dv\right)\nonumber\\
&&=\exp\left(i\langle \p,\exp\big( tA\big)\x\rangle -t \int_0^1 \int_{\S^{d-1}} |\langle   \M_t\p, \exp(v A) \M_t^{-1} B \sigma \s\rangle|^\alpha \mu(d\s)dv\right)
\\ \nonumber
&&=\exp\left(i\langle \p,\exp\big( tA\big)\x\rangle -t \int_0^1 \int_{\S^{d-1}} |\langle   \M_t\p, \exp(v A)  B \sigma \s\rangle|^\alpha \mu(d\s)dv\right),
\end{eqnarray}
where we have used   \eqref{str} and the fact that 
 $\M_t^{-1} B \y = B \y$,  for any $\y \in \R^{d}$.
\def\stima4 {
where the last equality follows from the structure of $\exp(tA)$. Indeed, write $[\exp(tA)]_{.,1}=\left(\begin{matrix}
[\exp(tA)]_{1,1}\\
[\exp(tA)]_{2,1}\\
\vdots\\
[\exp(tA)]_{n,1}
\end{matrix} \right) $, for the submatrix of $\exp(tA) $ in $\R^N\otimes \R^d $ corresponding to its entries $\big( [\exp(At)]_{j,1}\big)_{j\in \leftB 1,n\rightB} $.
Note that
$$
[\exp(tA)]_{j,1} \in \R^{d_{j}} \otimes \R^{d}.
$$
We have that each entry $\big( [\exp(tA)]_{j,1}\big)_{j\in \leftB 1,n \rightB} $ writes:
\begin{equation}
\label{FORM_COLUMN}
 [\exp(tA)]_{j,1}=  K_{j,1}(A) \, \frac{t^{j-1}}{(j-1)!}.
 \end{equation}
where  $K_{j,1}(A) $ is a matrix in $\R^{d_{j}} \otimes \R^{d}$ which has coefficients depending on the ones of $A$ but it is independent on $t$ (note that $K_{1,1}(A) = I_{d\times d}$). The previous  identity  gives that for all $\s\in {\mathbb S}^{d-1}$ and all $t>0,$ $v \ge 0$,
\begin{equation} \label{importa}
\M_{t}^{-1}\exp(vt A)B \sigma \s=\exp(v A)B \sigma \s .
\end{equation}
}
Introduce now the function
$$
\begin{array}{cccc}
f: &[0,1] \times \S^{d-1} &\longrightarrow &\S^{N-1}\\
&(v,\s) &\longmapsto &\frac{\exp(vA) B\sigma \s}{|\exp(vA) B\sigma \s|},
\end{array}
$$
and on $[0,1]\times \S^{d-1} $ the measure:
$
m_{\alpha}(dv,d\s) =   |\exp(vA) B\sigma \s|^\alpha  \mu(d\s)dv.
$
 The Fourier transform in  \eqref{TF} thus rewrites:
\begin{equation*}
\varphi_{\Lambda_t^{\x}}(\p)=\exp\left(i\langle \p,\exp\big( t A\big)\x\rangle -t  \int_{\S^{N-1}} |\langle   \M_t\p, \bxi \rangle|^\alpha \bar \mu(d\bxi)\right),
\end{equation*}
where $\bar \mu $ is the image of $m_\alpha $ by $f$. Introduce now the symmetrized version of $\bar \mu$, defining for all $A\in {\mathcal B}(\S^{N-1})$ (Borel $\sigma $-field of $\S^{N-1}$):
$\mu_S(A)=\frac{\bar \mu(A)+\bar \mu(-A)}{2}.$
We get that
\begin{equation}
\label{TF_MULTI_SCALE}
\varphi_{\Lambda_t^{\x}}(\p)=\exp\left(i\langle \p,\exp\big( tA\big)\x\rangle -t  \int_{\S^{N-1}} |\langle   \M_t\p, \bxi \rangle|^\alpha \mu_S(d\bxi)\right),
\end{equation}
which indeed involves the exponent of a symmetric stable process $(S_t)_{t\ge 0}$ with spectral measure $\mu_S $ at point $\M_t\p$. Up to now we have just proved \eqref{d33}. 
On the other hand, it follows from \eqref{TF}  and \A{ND} that there exists $c:=c(\A{A})>0$ s.t.:
\begin{equation}
\label{CTR_ND}
\int_0^1 \int_{\S^{d-1}} |\langle   \M_t\p, \exp(v A)B \sigma \s\rangle|^\alpha \mu(d\s)dv\ge c|\M_t\p|^\alpha.
\end{equation}
The above result is algebraic. We refer to Lemma \ref{bound} or to \cite{huan:meno:15} 
for a complete proof. 
Thus, the mapping $\p\in \R^N\mapsto \varphi_{\Lambda_t^\x}(\p)$ is in $L^1(\R^N) $ so that \eqref{THE_DENS_OU} follows by inversion and a direct change of variable. The smoothness of $p_\Lambda $ readily follows from \eqref{THE_DENS_OU} and \eqref{CTR_ND}.
It is then well known (see e.g.  \cite{dynk:65}), and it can as well be easily derived by direct computations,  
 that $p_\Lambda$ is a fundamental solution of $\partial_t - {\mathcal L}_\sigma$ (note that $(\partial_t - {\mathcal L}_\sigma) p(\cdot, \cdot, \y)=0 $  on $(0, + \infty) \times \R^N$ for all $\y\in \R^N $ and $p(t,\x,\cdot) \longrightarrow \delta_\x(\cdot) $ as $t \to 0^+$). 

 Equation \eqref{REP} can be easily obtained by using  the Fourier transform  taking into account that the symbol of ${\mathcal L}_{\sigma}$ is $\Psi (\lambda)$ (cf. Section 3.3.2 in \cite{appl:09}). It can also be derived by 
It\^o's formula applied to $\big(u(r +t ,\Lambda_r^{\x}) \big)_{r\ge 0} $ (cf. Section 4.4 in \cite{appl:09}). We have
$$
\E [u(r +t ,\Lambda_r^{\x})] = u(t,x) + \E \int_0^r (\partial_t + {\mathcal L}_\sigma) u(s +t ,\Lambda_{s}^{\x}) ds. 
$$
Letting $r \to +\infty$ and changing variable we get 
 \eqref{REP}. 
\end{proof}


Let $T>0$ be fixed and recall that ${\mathscr L}_\sigma =  \partial_t + {\mathcal L}_\sigma 
$. Theorem \ref{THE_THM1} will actually be a consequence of the following estimates on the strip $$\Sc:=[-T,T]\times \R^N .$$
\begin{theorem}[Parabolic $L^p$ estimates on the strip $\Sc$]\label{THM_STRISCIA}
For $p\in (1,+\infty)$, there exists $C_p:=C_p(\A{A})\ge 1$ independent of $T>0$
s.t. for all $u\in C_0^\infty((-T, T) \times \R^N$), $i \in \leftB 1,n \rightB ,$ we have:
\begin{equation}
\label{STIMA_STRISCIA}
\|\Delta_{\x_i}^{\frac{\alpha_i}{2}}  u\|_{L^p(\Sc)}\le C_p \|{\mathscr L }_\sigma u\|_{L^p(\Sc)}.
\end{equation}
\end{theorem}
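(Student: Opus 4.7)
The plan is to recast \eqref{STIMA_STRISCIA} as the $L^p$-boundedness of a family of singular integral operators on a suitable homogeneous space and then apply the Coifman--Weiss theory (Appendix \ref{APP_CW}). First, from the representation formula \eqref{REP} of Proposition \ref{THE_PROP_FUND}, every $u\in C_0^\infty((-T,T)\times \R^N)$ satisfies
$$u(t,\x) = -\int_t^{+\infty}\!\!\!\!ds\int_{\R^N} p_\Lambda(s-t,\x,\y)\,{\mathscr L}_\sigma u(s,\y)\,d\y.$$
Applying $\Delta_{\x_i}^{\alpha_i/2}$ and differentiating under the integral (justified by the smoothness and integrability of $p_\Lambda$, themselves consequences of \eqref{THE_DENS_OU} and the non-degeneracy bound \eqref{CTR_ND}), we obtain $\Delta_{\x_i}^{\alpha_i/2}u = T_i[{\mathscr L}_\sigma u]$, where $T_i$ is the integral operator on $\R^{1+N}$ with kernel
$$K_i(t,\x;s,\y) := \Delta_{\x_i}^{\alpha_i/2}p_\Lambda(s-t,\x,\y)\,\ind_{\{s>t\}}.$$
Thus \eqref{STIMA_STRISCIA} reduces to the $L^p(\R^{1+N})$-boundedness of $T_i$ for every $p\in(1,\infty)$, the restriction to $\Sc$ being obtained by extending $u$ by zero outside $(-T,T)\times\R^N$.

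Next, I would endow $\R^{1+N}$ with a quasi-distance built from the pseudo-norm $\rho$ in \eqref{rho_homo}, adapted via the dilation operator of Proposition \ref{PROP_INV_DIL} and the exponential drift $e^{tA}$ so as to be invariant under the relevant translations in time and under the dilations $(t,\x)\mapsto(\delta^\alpha t,\delta\M_{\delta^\alpha}\x)$. Lebesgue measure on $\R^{1+N}$ is then doubling with respect to this quasi-distance, the dimension being $\alpha+\sum_{i=1}^n d_i(1+(i-1)\alpha)$, which places us squarely in the Coifman--Weiss homogeneous-space setting (crucially, without any need for an underlying Lie-group structure). To apply the general theorem, two hypotheses must be verified for each $T_i$.

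The first is $L^2$-boundedness, which I would prove as a freestanding lemma (this is Lemma \ref{LEMME_L2}) by Plancherel and Fourier analysis, directly exploiting the explicit form of the characteristic function of $\Lambda_t^\x$ computed in \eqref{TF_MULTI_SCALE}: modulo the drift $e^{tA}$, the symbol of $T_i$ involves $|\M_t^{-1}B_i^*\lambda|^{\alpha_i}$ divided by the symbol of ${\mathscr L}_\sigma$ (which is comparable to $|\M_t\lambda|^\alpha$ by the non-degeneracy of $\mu_S$ established in Proposition \ref{THE_PROP_FUND}); the precise balancing $\alpha_i = \alpha/(1+(i-1)\alpha)$ from \eqref{DEF_ALPHA_I_B_I1} is exactly what makes this ratio uniformly bounded in $\lambda$ and $t$. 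The second is the H\"ormander-type smoothness condition on $K_i$, namely $\int_{d(z,z_0)\ge c\,d(z_0,z_0')}|K_i(z,z_0)-K_i(z,z_0')|\,dz\le C$ together with its transposed analogue. Here I would use the identity \eqref{CORRESP_L_S} to rewrite $K_i$ as $\det(\M_{s-t})^{-1}$ times a rescaled $\x_i$-fractional Laplacian of the density $p_S$ of a non-degenerate $\R^N$-valued stable process evaluated at $(\M_{s-t})^{-1}(e^{(s-t)A}\x-\y)$, and then estimate it through pointwise bounds and derivative bounds on $p_S$ (Lemma \ref{SENS_SING_STAB}), which hold \emph{uniformly regardless of the singularity of} $\mu_S$ thanks to its non-degeneracy.

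The main obstacle will be the H\"ormander kernel condition, especially for the degenerate indices $i\ge 2$. In those directions the smoothness in $\x_i$ of $p_\Lambda$ must be produced entirely by the commutators encoded in $A$ (assumption \A{H}), and the number of commutators required to reach the block $\x_i$ precisely determines the effective order $\alpha_i$ through the dilation weight $1+(i-1)\alpha$. Controlling the difference $K_i(\cdot,z_0)-K_i(\cdot,z_0')$ therefore requires a careful splitting of the integration region in the homogeneous balls, combined with the scaling identity \eqref{str} $e^{rA}=\M_r e^A \M_r^{-1}$ to transfer derivatives through the drift without picking up bad factors. Once these kernel estimates are in place, the Coifman--Weiss theorem yields the weak $(1,1)$ bound for $T_i$; interpolation with the $L^2$ bound and duality then give \eqref{STIMA_STRISCIA} for every $p\in(1,\infty)$.
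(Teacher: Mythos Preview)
Your overall strategy is the same as the paper's: reduce \eqref{STIMA_STRISCIA} via the representation \eqref{REP} to the $L^p$-boundedness of the singular integrals $T_i$ with kernel $\Delta_{\x_i}^{\alpha_i/2}p_\Lambda$, set things up on the homogeneous space $(\Sc,d,\mu)$ of Proposition~\ref{PROP_DOUBLING}, prove $L^2$-boundedness by Fourier methods (Lemma~\ref{LEMME_L2}) and the H\"ormander-type deviation bound by exploiting \eqref{CORRESP_L_S} and Lemma~\ref{SENS_SING_STAB} (this is Lemma~\ref{LEMME_DEV}), then conclude by Coifman--Weiss plus duality. So far so good.

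There is, however, a genuine technical gap. The Coifman--Weiss theorem as stated (Theorem~\ref{coi}) requires the kernel to lie in $L^2(X\times X,\mu\otimes\mu)$, and the raw kernel $K_i$ does \emph{not}: even after the time truncation $|s-t|\ge\varepsilon$, the paper explicitly notes (see the discussion after \eqref{DEF_K_EPSILON}) that $k_{i,\varepsilon}\notin L^2(\Sc\times\Sc)$. The paper therefore introduces a further spatial truncation, splitting $k_{i,\varepsilon}=k_{i,\varepsilon}^C+k_{i,\varepsilon}^F$ according to whether $d((t,\x),(s,\y))\le c_0$ or $>c_0$. The \textit{far} piece $K_{i,\varepsilon}^F$ has no singularity and is bounded on $L^p$ by a direct Schur-type argument (Section~\ref{SEC_K_EPS_F}); only the \textit{close} piece $K_{i,\varepsilon}^C$, whose kernel \emph{is} in $L^2(\Sc\times\Sc)$, is fed into Theorem~\ref{coi}. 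Both the $L^2$ bound (Lemma~\ref{LEMME_L2_TRUNC}) and the deviation estimate (Lemma~\ref{LEMME_DEV_TRUNC}) then have to be re-checked for the truncated kernel, which generates extra cross terms coming from the indicator functions. Finally one needs all constants to be uniform in $\varepsilon$ to pass to the limit. Your sketch applies Coifman--Weiss directly to $T_i$ and omits this entire regularization/splitting layer; without it the hypotheses of the theorem are not met.

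A smaller point: your description of the $L^2$ step as ``the ratio of symbols is bounded'' is too glib. Because of the drift $\langle A\x,\nabla_\x\rangle$, the operator $T_i$ is not a space--time Fourier multiplier; the paper's proof of Lemma~\ref{LEMME_L2} takes the spatial Fourier transform only, obtains an expression involving $\mathcal{F}(f)(s,e^{-(s-t)A^*}\bzeta)$ integrated in $s$, and then uses Cauchy--Schwarz together with the scaling identity \eqref{str} and a careful time-integration argument (the functions $\Theta_j$, $\bar\Theta_j$) to close. The balancing $\alpha_j=\alpha/(1+(j-1)\alpha)$ indeed enters precisely here, but the mechanism is more delicate than a pointwise symbol bound.
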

\begin{proof}[Proof of Theorem \ref{THE_THM1}]
To show that Theorem \ref{THM_STRISCIA}  implies  Theorem \ref{THE_THM1}
we introduce 
$
\psi\in C_{0}^{\infty}\left(  \mathbb{R}\right)
$
 with support in $(-1,1)$ and such that $\psi (s)=1$ for  $s \in [-1/2, 1/2]$.
 If $v \in C_{0}^{\infty}(\R^N)$ we consider 
\begin{equation} \label{ci1}
u\left( t,\x \right)  :=v\big(  \x\big)  \psi\big(  \frac{t}{T}\big),\;\; (t, \x) \in (-T, T) \times \R^N.
\end{equation} 
Since 
$
{\mathscr L }_\sigma  u\left( t,\x \right) $ $ =\psi (t/T) {\mathcal L}_{\sigma}v(\x) + \frac{1}{T}v(\x) \psi'(t/T)$ we find from Theorem \ref{THM_STRISCIA} applied to $u$:
\begin{gather*}
\|\Delta_{\x_i}^{\frac{\alpha_i}{2}}  u\|_{L^p(\Sc)}^p
= \int_{-T}^{T} |\psi(t/T)|^p dt \int_{\R^N} |\Delta_{\x_i}^{\frac{\alpha_i}{2}} v(\x)|^p d \x = 
 T \int_{-1}^{1} |\psi(s)|^p ds \int_{\R^N} |\Delta_{\x_i}^{\frac{\alpha_i}{2}} v(\x)|^p d \x
\\
\le C_p^p \int_{-T}^{T} \int_{\R^N} |\psi (t/T) {\mathcal L}_{\sigma}v(\x) + \frac{1}{T}v(\x) \psi'(t/T)|^p \,  d\x dt 
= C_p^p \, T \int_{-1}^{1} \int_{\R^N} |\psi (s) {\mathcal L}_{\sigma}v(\x) + \frac{1}{T}v(\x) \psi'(s)|^p \,   d\x ds. 
\end{gather*}
It follows that 
$$
\int_{-1}^{1} |\psi(s)|^p ds \int_{\R^N} |\Delta_{\x_i}^{\frac{\alpha_i}{2}} v(\x)|^p d \x \le C_p^p \, \int_{-1}^{1} \int_{\R^N} \big|\psi (s) {\mathcal L}_{\sigma}v(\x) + \frac{1}{T}v(\x) \psi'(s) \big|^p  d\x ds; 
$$
passing to the limit as $T \to \infty$ by the Lebesgue convergence theorem we get the assertion since ${\int_{-1}^{1} |\psi(s)|^p ds >0}$. 
\end{proof}

\begin{remark} \label{chen} The fact that in the previous theorem the constant $C_p$ is independent on $T$ agrees with   the   singular integral estimates given in Section 3 of 
\cite{bram:cerr:manf:96} for $\mathcal{L}_{\sigma}$ when $\alpha =2$.
 On the other hand, the parabolic estimates  of Theorem 3 in \cite{bram:cupi:lanc:prio:10} when considered on   the strip $\Sc:=[-T,T]\times \R^N $ have a constant depending on $T$ since  a more general matrix $A$ is considered in that paper (indeed the exponential matrix $e^{tA}$ can growth exponentially in \cite{bram:cupi:lanc:prio:10}). This is why the elliptic estimate in Theorem 1 of \cite{bram:cupi:lanc:prio:10}  (see \eqref{cald}) contains an extra term $\| v\|_{L^p(\R^N)}$ which is not present in the right-hand side of our estimates \eqref{THE_CTR_LP_ELLIP}.

\end{remark}

\mysection{Strategy of the proof of Theorem \ref{THM_STRISCIA}}\label{STRAT_PROOF}

To prove Theorem \ref{THM_STRISCIA}, thanks to the  formula \eqref{REP}, we restrict to  consider
functions of the form
\begin{equation}
\label{DEF_GK}
u(t,\x)=Gf(t,\x)=\int_t^T ds\int_{\R^N} p_\Lambda(s-t,\x,\y) f(s,\y)d\y,\ (t,\x)\in \Sc.
\end{equation}
 We study \eqref{DEF_GK} when  
 $f$ belongs to the space of test functions
 \begin{eqnarray}
\label{DEF_TEST_CLASS}
\nonumber {\mathscr T}
(\R^{1+N}):=\Big\{f \in C^{\infty} (\R^{1+ N}): \exists R>0, \forall t \not \in [-R,R],  f(t, \cdot ) =0,\ \    
\forall  t \in [-R,R],\ 
\\
 \text{$f$ and all spatial derivatives } \; \;  
 \partial_{\x}^{{\mathbf i}} f\in \bigcap_{p \in [1, +\infty]} L^p (\R^{1+N}),\ \text{for all multi-indices}\  {{\mathbf i}}\Big\}.
\end{eqnarray}
Note that,  for any $u \in C_0^{\infty} 
(\R^{1+ N}) $,    
\begin{equation}
\label{d11}
 (\partial_t + {\mathcal L}_\sigma)u \in  {\mathscr T}
(\R^{1+ N})
\end{equation}
(cf. Remark \ref{reg1}). 
\nero{
With the notations preceding \eqref{DEF_ALPHA_I_B_I1}, we write for all $i\in \leftB 1,n \rightB $,}  $  { B_i=\left(\begin{array}{c}
0_{d_1\times d_i}\\
\vdots\\
I_{d_i\times d_i}\\
\vdots\\
0_{d_n\times d_i}
\end{array} \right)}$.
Recalling the definition of $\M_t$ in \eqref{DEF_T_ALPHA}, we then get from  \eqref{CORRESP_L_S}, for all $i\in \leftB 1,n\rightB $ and $\alpha_i=\frac{\alpha}{1+\alpha (i-1)} $:
\nero{
\begin{eqnarray*} 
\Delta_{\x_i}^{\frac{\alpha_i}{2}}p_\Lambda (s-t,\x,\y)=\frac 1{\det(\M_{s-t})}\int_{\R^{d_i}} \Big(p_S\big(s-t,\M_{s-t}^{-1}(e^{(s-t)A}(\x+B_iz)-\y)\big)-
 p_S\big(s-t,\M_{s-t}^{-1}(e^{(s-t)A}\x-\y)\big) 
\\
-\nabla_{\x_i}\Big(p_S\big(s-t,\M_{s-t}^{-1}(e^{(s-t)A}\x-\y)\big)\Big)\cdot z\I_{|z|\le 1}
\Big) \frac{dz}{|z|^{d_i+\alpha_i}}\\
=\frac 1{\det(\M_{s-t})}\int_{\R^{d_i}} \Big(p_S\big(s-t,\M_{s-t}^{-1}(e^{(s-t)A}\x-\y)+(s-t)^{-(i-1)} \, e^A B_i  z\big)
-p_S\big(s-t,\M_{s-t}^{-1}(e^{(s-t)A}\x-\y)\big)\\
-\nabla p_S\big(s-t,\M_{s-t}^{-1}(e^{(s-t)A}\x-\y)\big)\cdot (\M_{s-t}^{-1}e^{(s-t)A}) B_i z\  \I_{|z|\le 1}\big)
 \Big) \frac{dz}{|z|^{d_i+\alpha_i}}, 
\end{eqnarray*}
$\x, \, \y \in \R^N, \, s>t$,
 where we have used that $\M_{s-t}^{-1}e^{(s-t)A} B_i z 
  = e^{A} \M_{s-t}^{-1} B_i z $ (see \eqref{str}) and the fact that  
\begin{equation} \label{na1}
 \M_{s-t}^{-1} B_i z = (s-t)^{-(i-1)} B_i z,\;\;\;, r>0, \; z \in \R^d. 
\end{equation}
In the sequel we set 
$$
 (e^A)_i = e^A B_i 
$$
Introduce now for  $\varphi\in C_0^{\infty}(\R^N) $ and for all $i\in \leftB 1,n\rightB,\ s>t $ the operator: 
\begin{equation}
\label{DEF_DELTA_MOD}
\Delta^{\frac{\alpha_i}2,A,i,s-t}\varphi(\x):=\int_{\R^{d_i} } \Big (\varphi(\x+(s-t)^{-(i-1)}(e^A)_iz)-\varphi(\x)- (s-t)^{-(i-1)}\nabla \varphi(\x)\cdot (e^A)_iz \I_{|z|\le 1} \Big) \frac{dz}{|z|^{d_i+\alpha_i}}.
\end{equation}
We insist here on the fact that, in \eqref{DEF_DELTA_MOD}, $\nabla $ stands for the full gradient on $\R^N$.
We thus get the correspondence: 
\begin{equation}
\label{CORRISPONDENZA}
\begin{split}
p_\Lambda (s-t,\x,\y)&=\frac{1}{\det(\M_{s-t})}p_S(s-t, \cdot )) \big(\M_{s-t}^{-1}(e^{(s-t)A}\x-\y)\big),\\
\Delta_{\x_i}^{\frac{\alpha_i}{2}}p_\Lambda (s-t,\x,\y)&=\frac{1}{\det(\M_{s-t})} (\Delta^{\frac{\alpha_i}2, A,i,s-t}p_S(s-t, \cdot )) \big(\M_{s-t}^{-1}(e^{(s-t)A}\x-\y)\big).
\end{split}
\end{equation}
\nero{Equation \eqref{CORRISPONDENZA} reflects in particular how the effect of a stable operator  on  the 
$i^{\rm th}$-variable propagates to the next variables. This correspondence allows to develop  specific computations related to the  stable process $(S_t)$ having a  singular spectral measure.
}
 Formula \eqref{CORRISPONDENZA} 
 is meaningful since, for any $s>t$, $p_S (s-t, \cdot) \in C^{\infty}(\R^N) \cap L^1(\R^N)$ (this is a consequence of \eqref{CTR_ND}). Moreover all the partial derivatives of $p_S (s-t, \cdot) \in  L^1(\R^N)$. Arguing as in Remark \ref{reg1} one can prove that $\Delta^{\frac{\alpha_i}2, A,i,s-t}p_S(s-t, \cdot ) \in L^1(\R^N) \cap C^{\infty}(\R^N)$.
 This 
 gives in particular that for $f\in {\mathscr T}(\R^{1+N})$ the function $\Delta_{\x_i}^{\frac {\alpha_i} 2}Gf(t,\x) $ is pointwise  defined for all  $(t,\x)\in \Sc $. 
}
 
 Additionally, by using It\^o's formula (cf. the proof of  Proposition \ref{THE_PROP_FUND}) or Fourier analysis techniques  
 it can be   easily derived that for $f \in  {\mathscr T}
(\R^{1+ N}) $,  $Gf$ solves the equation:
\begin{equation}\label{PbCauchy}
\begin{cases}
(\partial_t + {\mathcal L}_\sigma) w(t,\x)  = -f(t,\x),\ (t,\x)\in \overset{\circ}{\Sc},\\
w(T,\x)=0.
\end{cases}
\end{equation}
  By \eqref{REP}  we know that $u = - G ({\mathscr L}_\sigma u)$ if $u \in  C_0^\infty((-T, T) \times \R^N)$. Hence 
estimates \eqref{STIMA_STRISCIA} 
follows if we prove 
\begin{equation}
\label{STIMA_STRISCIA_GREEN}
\sum_{i=1}^{n}\|\Delta_{\x_i}^{\frac {\alpha_i}{2}} Gf\|_{L^p(\Sc)}\le C_p \|f\|_{L^p(\Sc)},\;\;\; f \in   {\mathscr T}
(\R^{1+ N}).
\end{equation}
 To prove estimate \eqref{STIMA_STRISCIA_GREEN},
we introduce for $i\in \leftB 1,n \rightB $, the kernel $k_i\big( (t,\x),(s,\y)\big):=\Delta_{\x_i}^{\frac{\alpha_i}{2}} p_\Lambda(s-t,\x,\y)$ so that:
\begin{equation}
\label{DEF_K}
K_if(t,x):=\Delta_{\x_i}^{\frac{\alpha_i}{2}} Gf(t,x)=\int_{t}^T ds \int_{\R^N}
k_i\big( (t,\x),(s,\y)\big) f(s,\y)d\y .
\end{equation}
The goal is to show $\sum_{i=1}^n\|K_if \|_{L^p(\Sc)} \le C_p \|f\|_{L^p(\Sc)}$.
But the kernels $(k_i)_{i\in \leftB 1,n\rightB}$ are singular (see e.g. Lemma \ref{SENS_SING_STAB} below) and do not satisfy some \textit{a priori} integrability conditions.
We will  establish \eqref{STIMA_STRISCIA_GREEN} through uniform estimates on a \textit{truncated} kernel in time and space.
We need to introduce the following \textit{quasi-distance} on $\Sc$.
\begin{definition}[Quasi-distance on $\Sc$] \label{dis11}
For $\big((t,\x), (s,\y)\big)\in {\Sc}^2 $, we  introduce the \textit{quasi-distance}:
\begin{equation}
\label{DEF_QD}
d\big((t,\x),(s,\y)\big ):=\frac 12\Big( \rho
(t-s,\exp((s-t)A)\x-\y)+\rho
(t-s,\x-\exp((t-s)A)\y)\Big),
\end{equation}
where the homogeneous pseudo-norm $\rho$ has been defined in  \eqref{rho_homo}.
\end{definition}

The above definition takes into account the transport of the first spatial point $\x$ by the matrix $e^{(s-t)A} $ (\textit{forward transport} of the initial point) and the one of the second spatial point $\y$ by $e^{(t-s)A} $  (\textit{backward transport} of the final point). By \textit{transport} we actually intend here the action of the first order term in \eqref{OP_SPAZIALE} (corresponding to the deterministic differential system $\dot\btheta_u(\z)=A\btheta_u(\z), \btheta_0(\z)=\z\in \R^N  $)  w.r.t. the considered associated times and points.  Observe that the fact that $d$ is actually a quasi-distance, i.e. that it satisfies the quasi-triangle inequality, is not obvious \textit{a priori}.
From the invariance by dilation of the operator ${\mathscr L}_\sigma $ established in Proposition \ref{THE_PROP_FUND} and the underlying group structure on $\R^{1+N}$ endowed with the composition law
$(t,\x)\circ (\tau,\bxi)=(t+\tau,\bxi+e^{A\tau }\x),$
the quasi-triangle inequality can be derived  similarly to    
 \cite{difr:poli:06} in the diffusive setting, see also \cite{bram:cupi:lanc:prio:10}.
 In order to be self-contained, and to show that the quasi-distance property still holds without any underlying group structure,
 we anyhow provide a proof of this fact in Proposition \ref{PROP_DOUBLING} below.

Let us now introduce a \textit{non-singular} Green kernel. 
We choose to truncate w.r.t. time. 
For $\varepsilon>0$ and a given $c_0>0$, we define for all $ i\in \leftB 1,n\rightB$:
\begin{eqnarray}
k_{i,\varepsilon}\big( (t,\x),(s,\y)\big)&:=&\I_{|s-t|\ge \varepsilon}\Delta_{\x_i}^{\frac{\alpha_i}{2}} p_\Lambda(s-t,\x,\y) \notag\\
&=& \I_{|s-t|\ge \varepsilon} \I_{ d ( (t,\x) , (s,\y) ) \le c_0 } \Delta_{\x_i}^{\frac{\alpha_i}{2}} p_\Lambda(s-t,\x,\y)
+\I_{|s-t|\ge \varepsilon} \I_{ d ( (t,\x) , (s,\y) ) > c_0 } \Delta_{\x_i}^{\frac {\alpha_i}{2}} p_\Lambda(s-t,\x,\y)  \notag\\
&=:& k_{i,\varepsilon}^C  \big( (t,\x),(s,\y)\big)+ k_{i,\varepsilon}^F \big( (t,\x),(s,\y)\big). \label{DEF_NUCLEO_SINGOLARE_TRONCATO}
\end{eqnarray}
We then define
\begin{equation}
\label{DEF_K_EPSILON}
K_{i,\varepsilon} f(t,x):=\int_{t}^T ds \int_{\R^N}
k_{i,\varepsilon} \big( (t,\x),(s,\y)\big) f(s,\y)d\y,\;\; f \in   {\mathscr T}
(\R^{1+ N}). 
\end{equation}
Accordingly, the operators $K_{i,\varepsilon}^C f(t,x)$ and $K_{i,\varepsilon}^F f(t,x)$ are obtained replacing $k_{i,\varepsilon}$ by $k_{i,\varepsilon}^C $ and $k_{i,\varepsilon}^F $ in \eqref{DEF_K_EPSILON}.
Observe that $K_{i,\varepsilon}^C$ (resp.  $K_{i,\varepsilon}^F$) is concerned with the integrating points that are \textit{close} (resp. \textit{far}) from the initial point with respect to the underlying quasi-distance in \eqref{DEF_QD}.
Similarly, we denote by $G_\varepsilon f(t,\x) $ the quantity obtained replacing $t$ by $t+\varepsilon$ in \eqref{DEF_GK}.
Since
$$\|\Delta_{\x_i}^{\frac{\alpha_i}{2}} G_\varepsilon f\|_{L^p(\Sc)}=\|K_{i,\varepsilon} f\|_{L^p(\Sc)} \le  \| K_{i,\varepsilon}^C f \|_{L^p(\Sc)} + \| K_{i,\varepsilon}^F f \|_{L^p(\Sc)},$$
the result \eqref{STIMA_STRISCIA_GREEN} will follow from weak convergence arguments, provided that the following lemma holds.
\begin{lemma}[Key Lemma]\label{KEY_LEMMA}
There exists a constant $C_{p}>0$ independent of $\varepsilon>0$ and $T>0$ such that for all $f\in  
{\mathscr T}(\R^{1+N})$:
$$\| K_{i,\varepsilon}^C f \|_{L^p(\Sc)} + \| K_{i,\varepsilon}^F f \|_{L^p(\Sc)} \le C_{p} \| f\|_{L^p(\Sc)}.$$
\end{lemma}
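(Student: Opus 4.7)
The plan is to treat $K_{i,\varepsilon}^C$ and $K_{i,\varepsilon}^F$ separately, using the structure of $(\Sc,d,dt\,d\x)$ as a space of homogeneous type (doubling being Proposition \ref{PROP_DOUBLING}): a direct Schur-type argument for the far piece, and the Coifman--Weiss theorem of Appendix \ref{APP_CW} for the singular close piece. The split at $d = c_0$ separates the bulk, integrable behaviour of $k_i$ from its diagonal singularity, which are of quite different natures.

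For $K_{i,\varepsilon}^F$, my plan is to establish
\[
\sup_{(t,\x)\in\Sc}\int_{\{d((t,\x),(s,\y))>c_0\}}|k_i((t,\x),(s,\y))|\,ds\,d\y \;\le\; C,
\]
and the transposed bound in $(s,\y)$, so that Schur's test yields $\|K_{i,\varepsilon}^F\|_{L^p\to L^p}\le C$ uniformly in $\varepsilon,T$. Rewriting $k_i$ via the rescaled stable density $p_S$ using \eqref{CORRISPONDENZA}, applying the pointwise bounds from the forthcoming Lemma \ref{SENS_SING_STAB}, and performing the change of variables $\y\mapsto \M_{s-t}^{-1}(e^{(s-t)A}\x-\y)$, the integral reduces to one of the form $\int_{\{\rho(\tau,\z)>c\}}\rho(\tau,\z)^{-Q-\alpha_i(1+\alpha(i-1))}\,d\tau\,d\z$, where $Q$ is the homogeneous dimension of $\Sc$ attached to the scaling $\M_t$. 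The identity $\alpha_i(1+\alpha(i-1))=\alpha$ from \eqref{DEF_ALPHA_I_B_I1} is precisely what makes this integral converge.

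For $K_{i,\varepsilon}^C$ I would invoke Coifman--Weiss, whose two hypotheses are the uniform $L^2$-boundedness of $K_{i,\varepsilon}$---this is Lemma \ref{LEMME_L2}, obtained by Plancherel after observing that the Fourier multiplier of $K_{i,\varepsilon}$ is bounded uniformly in $\varepsilon$---and the Hörmander-type regularity
\[
\int_{\{d((t,\x),(s_0,\y_0))>C_0 r\}}\big|k_i((t,\x),(s,\y))-k_i((t,\x),(s_0,\y_0))\big|\,dt\,d\x \;\le\; C
\]
for $(s,\y)$ in the $d$-ball $B_r((s_0,\y_0))$, with $C$ independent of $r$. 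For the latter I would Taylor-expand the smooth density $p_S(\tau,\cdot)$ at $(s_0,\y_0)$ (smoothness follows from \eqref{CTR_ND}), use the rescaling identity \eqref{str}, $e^{rA}=\M_r e^A\M_r^{-1}$, to put the perturbation at the natural homogeneous scale, and then change variables $(t,\x)\mapsto(\tau,\z)=(t-s_0,\M_{t-s_0}^{-1}(e^{(t-s_0)A}\x-\y_0))$ to reduce the integral to one over the far region of $\rho$ of a quantity decaying one order faster in $\rho$ than $k_i$ itself. The sharp gradient estimates on $p_S$ to be proved in Section \ref{THE_TEC_SEC} close the argument.

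The main obstacle I expect is the Hörmander regularity for the close part: the quasi-distance $d$ mixes anisotropic time/space scales that depend on the index through the exponents $1/(1+\alpha(j-1))$, the spectral measure $\mu_S$ of the rescaled stable process can be extremely singular, and the operator $\Delta^{\alpha_i/2,A,i,s-t}$ couples the spatial difference to time via the factor $(s-t)^{-(i-1)}$. Verifying that after Taylor expansion and rescaling the remainder genuinely gains exactly one unit of $\rho$-decay over $k_i$, and that this extra decay beats the volume element on the far region uniformly in $r$ and $\varepsilon$, is a delicate bookkeeping that hinges precisely on the exponent choice $\alpha_i=\alpha/(1+\alpha(i-1))$, and constitutes the technical core of Section \ref{THE_TEC_SEC}.
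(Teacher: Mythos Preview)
Your high-level architecture---Schur-type bound for $K_{i,\varepsilon}^F$, then Coifman--Weiss for $K_{i,\varepsilon}^C$ using Lemma~\ref{LEMME_L2} and a H\"ormander deviation estimate---is exactly the paper's route. The gap is in how you propose to execute both pieces: you implicitly assume pointwise Calder\'on--Zygmund type bounds on the kernel that are \emph{not available} here.

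Concretely, your claim that the far integral ``reduces to one of the form $\int_{\{\rho(\tau,\z)>c\}}\rho(\tau,\z)^{-Q-\alpha}\,d\tau\,d\z$'' would require $p_S(t,\x)\lesssim t^{-N/\alpha}(1+|\x|/t^{1/\alpha})^{-N-\alpha}$, and your plan for the H\"ormander condition (``a quantity decaying one order faster in $\rho$ than $k_i$ itself'') would require a matching pointwise gradient bound. But the spectral measure $\mu_S$ of the rescaled stable process $(S_t)$ can be extremely singular on $\S^{N-1}$ (this is forced by the degenerate structure---see e.g.\ the cylindrical examples \eqref{r44}, \eqref{r44_COUPLED}), and for such measures these pointwise bounds simply \emph{fail}. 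What Lemma~\ref{SENS_SING_STAB} actually delivers is a bound of the form $|\Delta^{\frac{\alpha_i}{2},A,i,t}p_S(t,\x)|\le \frac{C}{t}q(t,\x)$ and a H\"older sensitivity with the same $q$, where $q(t,\cdot)$ is a probability density obeying only the moment condition \eqref{INT_Q}, $\int q(t,\x)|\x|^\delta\,d\x\le C_\delta t^{\delta/\alpha}$ for $\delta<\alpha$, and no useful pointwise decay. After the change of variables you indicate, the far integral becomes $\int_{\rho_s(\z)>\tilde c_0}\frac{1}{|s-t|}q(\z)\,d\z\,ds$ (cf.~\eqref{RHO_MACRO}), not an explicit $\rho^{-Q-\alpha}$ integral; and the H\"ormander estimate produces integrals of the shape $\gamma^{\delta\alpha}\int_{\rho_s(\z)>K\gamma}\frac{|\z|^\kappa}{|u(s)|^{1+\delta}}q(\z)\,d\z\,ds$. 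Controlling these using only moments of $q$ is precisely the content of the ``integration of singularities'' Lemma~\ref{LABEL_LEMME_A_FAIRE} (and Remark~\ref{DELTA_0}), which you have not invoked and which is the real technical core. Two smaller points: the $L^2$-bound you need is for $K_{i,\varepsilon}^C$, not $K_{i,\varepsilon}$, so you must first subtract the $K_{i,\varepsilon}^F$-bound (Lemma~\ref{LEMME_L2_TRUNC}); and the paper establishes the H\"ormander condition with regularity in the \emph{first} variable $(t,\x)$ (Lemma~\ref{LEMME_DEV_TRUNC}), obtaining $p\in(1,2]$ directly and $p>2$ by an explicit duality computation on the adjoint kernel.
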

We prove this estimate separately for $K_{i,\varepsilon}^C f$ and $K_{i,\varepsilon}^F f$ in Section \ref{KEY_SECTION} below.
For the latter, a direct argument can be used whereas
for $K_{i,\varepsilon}^C f$, some controls from singular integral theory are required. Precisely, we aim to use Theorem 2.4, Chapter III in  \cite{coif:weis:71} (see Theorem \ref{coi}). The choice to split the kernel $ k_{i,\varepsilon}$ into $k_{i,\varepsilon}^C+k_{i,\varepsilon}^F $ is then motivated by the fact that even though for all $\varepsilon>0 $, $k_\varepsilon\in L^1(\Sc) $ when integrating w.r.t. $dtd\x$ or $dsd\y $ we do not have that $k_{i,\varepsilon}\in L^2(\Sc^2)$  
which is the assumption required in the quoted reference. This is what induces us to introduce a spatial truncation to get the joint integrability in all the variables.

 \mysection{Technical Lemmas}
\label{THE_TEC_SEC}

We now give two \textit{global} results that will serve several times for the truncated kernel as well. These lemmas are the current technical core of the paper. Lemma \ref{LEMME_L2} is a global $L^2$ bound on the singular kernel $\Delta_{\x_i}^{\frac {\alpha_i} {2}} G_\varepsilon f $ which is based on Fourier arguments and the representation formula in \eqref{THE_DENS_OU}.
Lemma \ref{LEMME_DEV} is a key tool to control singular kernels (see 
 Appendix \ref{APP_CW} in the   framework of homogeneous spaces). 

\begin{lemma}[Global $L^2$ estimate]
\label{LEMME_L2}
 There exists a positive constant $C_2:=C_2(\A{A})$ such that, for all $\varepsilon>0 $,  $i\in \leftB 1,n\rightB $, and for all $f \in L^2( \R^{1+N})$,
$$
\|\Delta_{\x_i}^{\frac{\alpha_i}{2}} G_\varepsilon f\|_{L^2(\Sc)}\le C_2 \|f\|_{L^2(\Sc)}.
$$
\end{lemma}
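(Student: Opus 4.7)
The plan is to pass to the Fourier side in the spatial variable and reduce the estimate to a per-frequency one-dimensional bound in time, exploiting the explicit multiplier structure uncovered in Proposition \ref{THE_PROP_FUND}.

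First, by Plancherel in $\x$,
$$\|\Delta_{\x_i}^{\alpha_i/2} G_\varepsilon f\|_{L^2(\Sc)}^2 \;=\; c_N \int_{-T}^T\!\!\int_{\R^N} |\p_i|^{2\alpha_i}\, |\widehat{G_\varepsilon f(t,\cdot)}(\p)|^2\, d\p\, dt,$$
and a direct Fourier transform of the density formula \eqref{THE_DENS_OU} yields, for the Ornstein-Uhlenbeck semigroup $P_r$,
$$\widehat{P_r g}(\p) \;=\; e^{-\Phi(r,\p)}\,\hat g\bigl(e^{-rA^*}\p\bigr),\qquad \Phi(r,\p) \;:=\; \int_0^r \Psi\bigl(e^{-vA^*}\p\bigr)\, dv.$$
Hence $\widehat{G_\varepsilon f(t,\cdot)}(\p) = \int_\varepsilon^{T-t} e^{-\Phi(r,\p)}\,\hat f(t+r, e^{-rA^*}\p)\, dr$. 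By \A{ND} applied to $\mu_S$ together with the rescaling identity \eqref{str}, one has the two-sided bound $\Phi(r,\p) \asymp r\,|\M_r \p|^\alpha$.

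Second, I would introduce the $L^2$-isometry $v(t,\x) := u(t, e^{tA}\x)$ (well-defined since $\det e^{tA}=1$), whose Fourier counterpart is $\hat v(t,\p) = \hat u(t, e^{-tA^*}\p)$. A short computation shows that the transport term in $\L_\sigma$ is eliminated and $\hat v$ solves the frequency-wise decoupled ODE
$$\partial_t \hat v(t,\p)\;-\;\Psi\bigl(e^{-tA^*}\p\bigr)\,\hat v(t,\p)\;=\;-g(t,\p),\qquad \hat v(T,\p)=0,$$
with $g(t,\p):=\hat f(t, e^{-tA^*}\p)$ and $\|g\|_{L^2(dt d\p)} = c_N\|f\|_{L^2(\Sc)}$. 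A further change of variable $\q = e^{tA^*}\p$ in the Plancherel identity recasts the target norm as
$$\|\Delta_{\x_i}^{\alpha_i/2}G_\varepsilon f\|_{L^2(\Sc)}^2 \;=\; c_N\!\int dt\,d\q\; |B_i^*e^{-tA^*}\q|^{2\alpha_i}\, |\hat v(t,\q)|^2,$$
and Duhamel gives $\hat v(t,\q) = \int_t^T e^{-\int_t^s \Psi(e^{-rA^*}\q)dr}\, g(s,\q)\, ds$.

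Third, since the map $g \mapsto \hat v$ is diagonal in $\q$, the $L^2$ bound reduces to the per-frequency one-dimensional estimate
$$\int_{-T}^T |B_i^* e^{-tA^*}\q|^{2\alpha_i}\,|\hat v(t,\q)|^2\, dt \;\le\; C\!\int_{-T}^T |g(t,\q)|^2\, dt,$$
with $C$ independent of $\q$ and $T$. I would establish this via the Schur test applied to the kernel $K(t,s;\q) := |B_i^* e^{-tA^*}\q|^{\alpha_i}\, e^{-\int_t^s \Psi(e^{-rA^*}\q)dr}\, \mathbf{1}_{\{s>t\}}$, checking that $\sup_t\int|K|ds$ and $\sup_s\int|K|dt$ are bounded uniformly in $\q$. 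The verification combines (a) the rescaling identity $e^{-tA^*}=\M_t^{-1}e^{-A^*}\M_t$, which makes $|B_i^*e^{-tA^*}\q|$ a polynomial in $t$ of degree $n-i$ with coefficients controlled by $\M_t\q$; (b) the non-degeneracy $\Psi(e^{-tA^*}\q) \asymp |(e^{-tA^*}\q)_1|^\alpha$, providing exponential decay along characteristics; and (c) a time-rescaling argument exploiting the dilation invariance of Proposition \ref{PROP_INV_DIL}.

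The main obstacle is the per-frequency Schur bound in the third step. The difficulty is that the coefficient $\Psi(e^{-tA^*}\q)$ can vanish to high order on the set where $(e^{-tA^*}\q)_1=0$, so $\int_t^s e^{-\int \Psi dr}\,ds$ may be very large near such times. The exponents $\alpha_i = \alpha/(1+(i-1)\alpha)$ are precisely those for which the compensating weight $|B_i^*e^{-tA^*}\q|^{\alpha_i}$ vanishes at the compatible rate dictated by the dilation structure of $\L_\sigma$; the cancellation that produces a uniform constant must be extracted by splitting the $r$-integral according to which component $|\M_r\q|_j$ is dominant and summing the contributions, which is the technical heart of the lemma.
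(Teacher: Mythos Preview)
Your strategy is correct and coincides with the paper's: Plancherel in space, then a per-frequency bound obtained by Cauchy--Schwarz/Schur on the one-dimensional time integral. The paper does Cauchy--Schwarz in $s$ before the change of variable, producing exactly the two quantities you call Schur bounds: $\Theta_j(t,T,\zeta_j)=|\zeta_j|^{\alpha_j}\!\int_t^T e^{-c(s-t)^{1+\alpha(j-1)}|\zeta_j|^\alpha}\,ds$ and, after Fubini and the substitution $\q=e^{-(s-t)A^*}\zeta$, $\bar\Theta_j(s,T,\q)=\int_{-T}^s |(e^{(s-t)A^*}\q)_j|^{\alpha_j}e^{-c(s-t)|\M_{s-t}\q|^\alpha}\,dt$. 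Your change of variable $v(t,\x)=u(t,e^{tA}\x)$ is cosmetic; after relabelling, your two Schur integrals reduce to these.

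Where your description of the ``main obstacle'' is slightly off is in the mechanism that closes the argument. You focus on the pointwise vanishing of $\Psi(e^{-tA^*}\q)\asymp |(e^{-tA^*}\q)_1|^\alpha$, but pointwise control is never used. What matters is the \emph{integrated} Kalman-type lower bound $\int_0^r \Psi_0\bigl(B^*e^{\pm uA^*}\p\bigr)\,du\;\ge\; c\,r\,|\M_r\p|^\alpha$ (this is \eqref{CTR_ND}/Lemma~\ref{bound}), and your item (b) does not invoke it. With this in hand the first Schur bound is immediate via the one-variable substitution $v=u\,|\zeta_j|^{\alpha_j}$. For the second, the paper does \emph{not} split by dominant component; it uses the structural expansion $|(e^{rA^*}\q)_j|\le c\sum_{k\ge j} r^{\,k-j}|\q_k|$ (from $e^{rA^*}=\M_r^{-1}e^{A^*}\M_r$ and the sub-diagonal form of $A$) together with $|\M_r\q|^\alpha\ge r^{\alpha(k-1)}|\q_k|^\alpha$, and then the elementary identity $\alpha_j-\alpha_k=(k-j)\,\alpha_j\alpha_k$ makes each term of the sum a scale-invariant integral equal to a universal constant. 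That algebraic identity is the real reason the exponents $\alpha_i=\alpha/(1+(i-1)\alpha)$ are forced, and it replaces the vaguer ``compatible vanishing rate'' and dilation-invariance hints in your step (c).
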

The constant $C_2$ does not depend on $T>0$ considered in the strip $\Sc$. Also, this estimate  would hold under weaker assumptions than \A{ND}. No symmetry would a priori be needed, a control similar to \eqref{EQ_ND} for the real-part should be enough.
It would also hold for a wider class of initial operators $L_\sigma $ including those associated with tempered or truncated stable processes. The result seems to be new even in the limit local case $\alpha=2.$

\begin{lemma}[Deviation Controls]\label{LEMME_DEV}
There exist constants $K:=K(\A{A}),\ C:=C(\A{A})\ge 1$ s.t. for all  
$(\sigma,\bxi), (t,\x) \in \Sc$  the following control holds, for all $i\in \leftB 1,n\rightB $:
\begin{equation}
\label{CONTROL_DEV}
\int_{ s\ge t\vee \sigma , \, \rho\ge K \gamma }|\Delta_{\x_i}^{\frac{\alpha_i}{2}}p_\Lambda (s-t,\x,\y)-\Delta_{\x_i}^{\frac{\alpha_i}{2}}p_\Lambda (s-\sigma,\bxi,\y) | d\y ds \le C,
\end{equation}
where we have denoted $\rho:=\rho(s-t,e^{(s-t)A}\x-\y), \gamma:=\rho(\sigma-t,e^{(\sigma-t)A}\x-\bxi) $.
\end{lemma}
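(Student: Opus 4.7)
The estimate is a H\"ormander-type cancellation condition for the singular kernel $k_i((t,\x),(s,\y))=\Delta_{\x_i}^{\alpha_i/2}p_\Lambda(s-t,\x,\y)$ in the Coifman--Weiss framework. The strategy is to use the correspondence \eqref{CORRISPONDENZA} to reduce everything to the non-degenerate $\R^N$-stable density $p_S$ at rescaled arguments, and then to estimate the resulting difference by a mean-value argument along a path from $(\sigma,\bxi)$ to $(t,\x)$, using the integrability bounds on $p_S$ and its derivatives (Lemma \ref{SENS_SING_STAB}).

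\emph{Step 1 (scaling reduction).} By the dilation invariance of $\partial_t-{\mathcal L}_\sigma$ (Proposition \ref{PROP_INV_DIL}) and the corresponding scaling of $\rho$ recorded in Remark \ref{REM_HOMO}, one can rescale time by $\gamma^{\alpha}$ and space by the graded action of $\M_{\gamma^{\alpha}}$ so that $\gamma=\rho(\sigma-t,e^{(\sigma-t)A}\x-\bxi)=1$. Both sides of \eqref{CONTROL_DEV} are invariant under this transformation (the measure $d\y\,ds$ rescales exactly against the homogeneity of $\Delta_{\x_i}^{\alpha_i/2}p_\Lambda$ built into \eqref{CORRISPONDENZA}), so it suffices to prove the bound under the normalization $\gamma=1$, the integration now running over $\rho\ge K$.

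\emph{Step 2 (change of variable).} Setting $\tau:=s-t$ and $\tau':=s-\sigma$, perform in each integral the substitution $\y\mapsto \z_\tau:=\M_\tau^{-1}(e^{\tau A}\x-\y)$, resp.\ $\y\mapsto\z_{\tau'}:=\M_{\tau'}^{-1}(e^{\tau' A}\bxi-\y)$; the Jacobian exactly cancels the prefactor $\det(\M_\cdot)^{-1}$ in \eqref{CORRISPONDENZA}. The two integrands become $(\Delta^{\alpha_i/2,A,i,\tau}p_S(\tau,\cdot))(\z_\tau)$ and $(\Delta^{\alpha_i/2,A,i,\tau'}p_S(\tau',\cdot))(\z_{\tau'})$, which, expressed in the common variable $\z:=\z_\tau$, differ by (i) a time shift in the operator and in $p_S$ from $\tau$ to $\tau'$, and (ii) a space translation $\mathbf{h}=\mathbf{h}(s,t,\sigma,\x,\bxi)$ encoding the mismatch $\M_{\tau'}^{-1}e^{\tau'A}\bxi-\M_\tau^{-1}e^{\tau A}\x$. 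Using \eqref{str} and the quasi-distance definition \eqref{DEF_QD}, the normalized condition $\gamma=1$ entails $|\tau-\tau'|\le 1$ and an anisotropic control of each block of $\mathbf{h}$ by a negative power of $(1+\tau^{1/\alpha})$, i.e.\ the components of $\mathbf{h}$ in the $i$-th block are of order $\tau^{-(i-1)}$ times a factor at most $1$.

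\emph{Step 3 (telescoping and mean-value).} Decompose
\begin{equation*}
\bigl(\Delta^{\alpha_i/2,A,i,\tau}p_S(\tau,\cdot)\bigr)(\z)-\bigl(\Delta^{\alpha_i/2,A,i,\tau'}p_S(\tau',\cdot)\bigr)(\z-\mathbf{h})
\end{equation*}
into (a) a spatial increment in $\mathbf{h}$ handled by the fundamental theorem of calculus along the segment $[\z-\mathbf{h},\z]$, producing $\int_0^1\nabla\Delta^{\alpha_i/2,A,i,\tau}p_S(\tau,\cdot)(\z-\lambda\mathbf{h})\cdot\mathbf{h}\,d\lambda$, and (b) a time increment from $\tau'$ to $\tau$ treated through $\partial_\tau p_S=L_S p_S$, where $L_S$ is the $\R^N$-stable generator with spectral measure $\mu_S$, together with the explicit $\tau$-dependence of the coefficient $(s-t)^{-(i-1)}$ appearing in \eqref{DEF_DELTA_MOD}. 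Lemma \ref{SENS_SING_STAB} (applicable despite the singularity of $\mu_S$ thanks to the $\R^N$-non-degeneracy \A{ND} of $\mu_S$ established in Proposition \ref{THE_PROP_FUND}) gives the required pointwise and integral bounds on $p_S$ and its first order derivatives, as well as on the non-local operator $\Delta^{\alpha_i/2,A,i,\tau}p_S$.

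\emph{Step 4 (integration).} Inserting these pointwise bounds, integrating in $\z$ uses the $L^1$ integrability of $\nabla p_S(1,\cdot)$ and of $L_Sp_S(1,\cdot)$ after the scaling $p_S(\tau,\z)=\tau^{-N/\alpha}p_S(1,\tau^{-1/\alpha}\z)$. The remaining integral in $ds$ takes the schematic form $\int\bigl(|\mathbf{h}|_\M\tau^{-1/\alpha}+|\tau-\tau'|\tau^{-1}\bigr)\,d\tau$ on a region of $\tau$ bounded away from $0$ by the constraint $\rho\ge K\gamma=K$, and bounded above by integrability at infinity coming from the additional spatial decay of the derivatives of $p_S$ as $|\z|\to\infty$. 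Choosing $K$ large enough produces a convergent majorant depending only on the parameters of \A{A}, yielding the claimed universal constant $C$.

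\emph{Main obstacle.} The chief difficulty is the intertwined dependence on $\tau$ of both the density $p_S(\tau,\cdot)$ and the non-local operator $\Delta^{\alpha_i/2,A,i,\tau}$ in \eqref{DEF_DELTA_MOD}: the time-shift term in the telescoping produces a cross contribution that is not obviously small, and it is the anisotropic scaling of $\M_\tau$ (which makes different blocks of $\mathbf{h}$ shrink at different rates $\tau^{-(i-1)}$) that must exactly cancel these $\tau$-dependent factors block by block. Keeping track of this bookkeeping while staying within the $L^1$-integrability ranges afforded by Lemma \ref{SENS_SING_STAB} is the technical heart of the proof.
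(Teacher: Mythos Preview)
Your overall architecture --- passing through \eqref{CORRISPONDENZA} to the non-degenerate density $p_S$, telescoping into a spatial increment and a time increment, and invoking Lemma \ref{SENS_SING_STAB} --- is the same as the paper's. But Step~4 contains a genuine error that breaks the argument.

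You assert that the $s$-integral runs over ``a region of $\tau$ bounded away from $0$ by the constraint $\rho\ge K\gamma=K$''. This is false. Since $\rho=(s-t)^{1/\alpha}+\sum_j |(e^{(s-t)A}\x-\y)_j|^{1/(1+\alpha(j-1))}$, the condition $\rho\ge K\gamma$ is fully compatible with $s-t\to 0$ provided $\y$ is far from $e^{(s-t)A}\x$ in the anisotropic metric; after your change of variable this is the regime $|\z|\to\infty$, $\tau\to 0$. The sensitivity estimates produce a non-integrable factor $(s-\sigma)^{-(1+\delta)}$ (or $(s-t)^{-(1+\delta)}$), and nothing in your outline absorbs it. The paper handles exactly this via Lemma \ref{LABEL_LEMME_A_FAIRE}: on $\{|u(s)|\le c_0\gamma^\alpha,\ \rho_s(\z)\ge K\gamma\}$ some block satisfies $|\z_{i_0}|\gtrsim \gamma^{1+\alpha(i_0-1)}/|u(s)|^{i_0-1+1/\alpha}$, so a small power $|\z|^{\eta}$ with $\eta<\alpha$ can be extracted and traded against the time singularity, using only $\int|\z|^{\eta+\kappa}q(\z)\,d\z<\infty$. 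This space--time trade-off is the technical core of the proof; your scaling reduction to $\gamma=1$ does not bypass it.

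Two further points. In Step~3(a) you take a full gradient of $\Delta^{\alpha_i/2,A,i,\tau}p_S$, i.e.\ effectively $\beta=1$. The paper uses instead the $\beta$-H\"older bound of Lemma \ref{SENS_SING_STAB}(ii) with $\beta$ small: feeding the spatial term into Lemma \ref{LABEL_LEMME_A_FAIRE} requires $\delta=\beta(i-1+1/\alpha)<1$, which for $i=n$ forces $\beta<\alpha_n<1$. With $\beta=1$ the lemma does not apply. Second, the time increment hides more than you indicate: $\tau$ enters through $p_S(\tau,\cdot)$, through the operator $\Delta^{\alpha_i/2,A,i,\tau}$, through the rescaling $\M_\tau^{-1}$, and through $\det\M_\tau$. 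The paper splits $I_{i,T}$ into three pieces $I_{i,T}^1+I_{i,T}^2+I_{i,T}^3$ accordingly, the determinant term $I_{i,T}^3$ (involving $|1-\det\M_{(s-t)/(s-\sigma)}|$) being specific to the degenerate setting; each is then controlled separately by Lemma \ref{LABEL_LEMME_A_FAIRE}. Your telescoping needs this finer decomposition.
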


\subsection{Proof of Lemma \ref{LEMME_L2}}
We start from the representation of the density obtained in Proposition \ref{THE_PROP_FUND}:
\begin{eqnarray*}
 p_\Lambda(t,\x,\y)&=&
 \frac{\det(\M_t^{-1})}{(2\pi)^N}\int_{\R^{N}} \exp\Big(-i\langle \M_t^{-1}(\y-e^{t A}\x),\p\rangle \Big)\exp\Big(-t\int_{\S^{N-1}}|\langle \p, \bxi\rangle|^\alpha \mu_S(d\bxi)\Big)d\p\\
 &=&
 \frac{1}{(2\pi)^N} \int_{\R^N} \exp\Big(-i \langle \q , \x - e^{-tA}\y \rangle \Big)
 \exp\Big(-t\int_{\S^{N-1}}|\langle \M_t e^{-tA^*} \q, \bxi\rangle|^\alpha \mu_S(d\bxi)\Big) d\q,
\end{eqnarray*}
recalling that the specific form of $A$ yields $\det(e^{t A^*})=\det(e^{t A})=1$.
 Let us fix $j\in \leftB 1,n\rightB $ and prove the estimate for $\Delta_{\x_j}^{\frac {\alpha_j}2} G_\varepsilon f $. 
We can compute, for $f\in{\mathscr S}(\R^{1+N})$ (Schwartz class of $\R^{1+N}$), the Fourier transform:
 $$\bzeta\in \R^N\mapsto  \mathcal{F}(\Delta_{\x_j}^{\frac{\alpha_j}{2}}G_\varepsilon f) (t,\bzeta) = \int_{\R^{N}}  e^{i \langle \bzeta, \x \rangle} \Delta_{\x_j}^{\frac{\alpha_j}{2}}G_\varepsilon f(t,\x) d \x,\ t\in [-T,T].$$
 Indeed,  from the comments after \eqref{CORRISPONDENZA} and by the Young inequality  we have  that  $\Delta_{\x_j}^{\frac{\alpha_j}{2}}G_\varepsilon f(t, \cdot)\in L^1 (\R^N)\cap L^2(\R^{N}) $, $t \in [-T,T]$.
  For all \nero{ $(t,\bzeta)\in \Sc $ with $t+ \epsilon \le T$}  we get from the definition of $G_\varepsilon f $ following \eqref{DEF_K_EPSILON}:

\def\stima3{  
\begin{eqnarray*}
\mathcal{F}(\Delta_{\x_j}^{\frac{\alpha_j}{2}}G_\varepsilon f)  (t,\bzeta)
=
\int_{\R^{N}}  e^{i \langle \bzeta,\x \rangle} \Bigg(\int_{t+\varepsilon}^T 
\int_{\R^{N}} 
f(s,\y)\\
\times \frac{1}{(2\pi)^N} \int_{\R^N} e^{-i \langle \q , \x \rangle} \,
e^{i \langle \q ,  e^{-(s-t)A}\y \rangle } |\q_j|^{\alpha_j}
 \exp\Big(-(s-t)\int_{\S^{N-1}}|\langle  \M_{s-t} e^{-(s-t)A^*} \q, \bxi\rangle|^\alpha \mu_S(d\bxi)\Big) d\q d\y ds\Bigg) d\x.
%
\end{eqnarray*}
Using the Fubini theorem in the internal integral with respect to $(\q, \y,s)$ we obtain
\begin{align*}
\mathcal{F}(\Delta_{\x_j}^{\frac{\alpha_j}{2}}G_\varepsilon f)  (t,\bzeta)
=  \frac{1}{(2\pi)^N} 
\int_{\R^{N}}  e^{i \langle \bzeta,\x \rangle} \Bigg(
\int_{\R^{N}} e^{-i \langle \q , \x \rangle} 
 \\  
 \times \,
|\q_j|^{\alpha_j}  \exp\big(-(s-t)\int_{\S^{N-1}}|\langle  \M_{s-t} e^{-(s-t)A^*} \q, \bxi\rangle|^\alpha \mu_S(d\bxi) \big) 
\, \Big( \int_{t+\varepsilon}^T \int_{\R^N} e^{i \langle \q ,  e^{-(s-t)A}\y \rangle } f(s,\y) 
 ds d\y \Big ) d \q \Bigg) d \x 
\end{align*}
The Fourier inversion formula then yields:
\begin{eqnarray*}
&&\mathcal{F}(\Delta_{\x_j}^{\frac{\alpha_j}{2}}G_\varepsilon f)  (t,\bzeta) \\
&=&
 \int_{t+\varepsilon}^T  \int_{\R^{N}}  e^{i\langle \bzeta ,e^{-(s-t)A}\y\rangle} f(s,\y)  |\bzeta_j|^{\alpha_j}
\exp\left(- (s-t)\int_{\S^{N-1}} |\langle \M_{s-t} e^{-(s-t)A^*} \bzeta,\bxi \rangle|^{\alpha}\mu_S(d\bxi) \right)  d\y ds\\
&=& |\bzeta_j|^{\alpha_j}  \int_{t+\varepsilon}^T  \mathcal{F}(f)(s,e^{-(s-t)A^*} \bzeta)\exp\left(- (s-t)\int_{\S^{N-1}} |\langle \M_{s-t} e^{-(s-t)A^*} \bzeta,\bxi \rangle|^{\alpha}\mu_S(d\bxi) \right) ds
\end{eqnarray*}
($\mathcal{F}(f)$ denotes the Fourier transform of $f(s, \cdot)$).  
}  
\begin{eqnarray*}
\mathcal{F}(\Delta_{\x_j}^{\frac{\alpha_j}{2}}G_\varepsilon f)  (t,\bzeta)
=|\bzeta_j|^{\alpha_j}\mathcal{F}(G_\varepsilon f)(t,\bzeta)=|\bzeta_j|^{\alpha_j} \int_{\R^N}\exp(-i\langle \bzeta, \x\rangle)\Big(\int_{t+\varepsilon}^{T}\int_{\R^N}
p_\Lambda(s-t,\x,\y)f(s,\y)d\y ds \Big)d\x\\
=|\bzeta_j|^{\alpha_j} \int_{\R^N}\exp(-i\langle \bzeta, \x\rangle)\Big(\int_{t+\varepsilon}^{T}\int_{\R^N} 
\frac{1}{\det(\M_{s-t})}p_S(s-t,\M_{s-t}^{-1}(e^{(s-t)A}\x-\y))f(s,\y)d\y ds \Big) d\x,
\end{eqnarray*}
using equation \eqref{CORRESP_L_S} for the last identity. Rewrite now
\begin{eqnarray*}
\mathcal{F}(\Delta_{\x_j}^{\frac{\alpha_j}{2}}G_\varepsilon f)  (t,\bzeta)
=|\bzeta_j|^{\alpha_j} \int_{\R^N}\exp(-i\langle \M_{s-t}e^{-(s-t)A^*}\bzeta, \M_{s-t}^{-1}e^{(s-t)A}\x\rangle)\\
\Big(\int_{t+\varepsilon}^{T}\int_{\R^N} 
\frac{1}{\det(\M_{s-t})}p_S(s-t,\M_{s-t}^{-1}(e^{(s-t)A}\x-\y))f(s,\y)d\y ds \Big) d\x.
\end{eqnarray*}
Using the Fubini theorem yields: 
\begin{eqnarray*}
\mathcal{F}(\Delta_{\x_j}^{\frac{\alpha_j}{2}}G_\varepsilon f)  (t,\bzeta)
=|\bzeta_j|^{\alpha_j} \int_{t+\varepsilon}^{T} \int_{\R^N}\exp(-i\langle e^{-(s-t)A^*}\bzeta, \y) f(s,\y) \\
\Big( \int_{\R^N}\exp(-i\langle \M_{s-t}e^{-(s-t)A^*}\bzeta, \M_{s-t}^{-1}(e^{(s-t)A}\x-\y)\rangle)
\frac{1}{\det(\M_{s-t})}p_S(s-t,\M_{s-t}^{-1}(e^{(s-t)A}\x-\y))d\x \Big) d\y ds\\
=|\bzeta_j|^{\alpha_j} \Big(\int_{t+\varepsilon}^{T} \int_{\R^N}\exp(-i\langle e^{-(s-t)A^*}\bzeta, \y) f(s,\y) 
\Big( \int_{\R^N}\exp(-i\langle \M_{s-t}e^{-(s-t)A^*}\bzeta, \tilde \x\rangle)
p_S(s-t,\tilde \x)d\tilde \x \Big) d\y ds,
\end{eqnarray*}
 setting $\tilde \x=\M_{s-t}^{-1}(e^{(s-t)A}\x-\y) $ and recalling that $\det(e^{(s-t)A})=1 $ for the last identity. We finally get
\begin{eqnarray*}
\mathcal{F}(\Delta_{\x_j}^{\frac{\alpha_j}{2}}G_\varepsilon f)  (t,\bzeta) 
= |\bzeta_j|^{\alpha_j}  \int_{t+\varepsilon}^T  \mathcal{F}(f)(s,e^{-(s-t)A^*} \bzeta)\mathcal{\F}(p_S)(s-t,\M_{s-t}e^{-(s-t)A^*}\bzeta)ds\\
= |\bzeta_j|^{\alpha_j}  \int_{t+\varepsilon}^T  \mathcal{F}(f)(s,e^{-(s-t)A^*} \bzeta)\exp\left(- (s-t)\int_{\S^{N-1}} |\langle \M_{s-t} e^{-(s-t)A^*} \bzeta,\bxi \rangle|^{\alpha}\mu_S(d\bxi) \right) ds,
\end{eqnarray*}
where ($\mathcal{F}(f)(s,\cdot), \mathcal{F}(p_S)(s-t,\cdot)$ denote the Fourier transforms of $f(s, \cdot),p_{S}(s-t,\cdot)$.

Let us now com\-pu\-te $||\mathcal{F}(\Delta_{\x_j}^{\frac{\alpha_j}{2}}G_\varepsilon f)||_{L^2(\Sc)}$. From the non degeneracy of $\mu_S$,  we have:
\begin{eqnarray}
\label{THE_REF_FOR_LATER}
|\mathcal{F}(\Delta_{\x_j}^{\frac{\alpha_j}{2}}G_\varepsilon f)  (t,\bzeta) | &\le& C |\bzeta_j|^{\alpha_j} \int_{t}^T  |\mathcal{F}(f)(s,e^{-(s-t)A^*} \bzeta)|\exp\left(-C^{-1}(s-t)| \M_{s-t}e^{-(s-t)A^*}\bzeta|^\alpha \right)ds.
\end{eqnarray}
 For the $L^2$ norm of $\mathcal{F} (\Delta_{\x_j}^{\frac{\alpha_j}{2}}G f)$, using 
 the Cauchy-Schwarz inequality, we obtain:
\begin{align*}
&&||\mathcal{F}(\Delta_{\x_j}^{\frac{\alpha_j}{2}}G_\varepsilon f)||_{L^2(\Sc)}^2
\\ 
&\le& C \int_{-T}^T dt  \int_{\R^{N}} \Bigg(   |\bzeta_j|^{\alpha_j} 
 \,\int_{t}^T  |\mathcal{F}(f)(s,e^{-(s-t)A^*} \bzeta) |^2 \exp\left(- C^{-1}(s-t)| \M_{s-t}e^{-(s-t)A^*}\bzeta|^\alpha \right)ds\Bigg)\\
 &&\times \Bigg(|\bzeta_j|^{\alpha_j}\int_t^T \exp\left(- C^{-1}(s-t)| \M_{s-t}e^{-(s-t)A^*}\bzeta|^\alpha \right)ds\Bigg)d\bzeta. 
 \end{align*}
Now formula \eqref{str} 
\nero{yields that:}
$e^{-(s-t)A^*}=\M_{s-t}^{-1}e^{-A^*} \M_{s-t},$
where $e^{-A^*} $ is non-degenerate. Thus, there exists \nero{ $c = c(A)>0$} s.t. for all $T\ge s\ge t\ge -T,\ \bzeta\in \R^N $:
\begin{eqnarray}
(s-t)| \M_{s-t}e^{-(s-t)A^*}\bzeta|^\alpha\ge c  (s-t)| \M_{s-t}\bzeta|^\alpha\ge c(s-t)^{1+\alpha(j-1)}|\bzeta_j|^\alpha , \;\; \text{and}\notag\\
\label{SEPARAZIONI_IN_S}
||\mathcal{F}(\Delta_{\x_j}^{\frac{\alpha_j}{2}}G_\varepsilon f)||_{L^2(\Sc)}^2\notag
\\ \le C \int_{-T}^T \! \! \! dt  \int_{\R^{N}} \! \! \Bigg(   |\bzeta_j|^{\alpha_j} 
 \! \int_{t}^T \!\! |\mathcal{F}(f)(s,e^{-(s-t)A^*} \bzeta) |^2 \exp\left(- C^{-1}(s-t)| \M_{s-t}e^{-(s-t)A^*}\bzeta|^\alpha \right)ds\Bigg)
\Theta_j(t,T,\bzeta_j) d\bzeta,\notag
 \\
 \Theta_j(t,T,\bzeta_j):=\Bigg(|\bzeta_j|^{\alpha_j}\! \!\! \int_t^T \exp\left(- C^{-1}(s-t)^{1+\alpha(j-1)}|\bzeta_j|^\alpha \right)ds\Bigg).
\end{eqnarray}
Let us prove that $ \Theta_j(t,T,\bzeta_j)$ is bounded. Write \nero{ for $ |\bzeta_j | \not =0 $, changing variable, recalling that $\alpha_j=\frac{\alpha}{1+\alpha(j-1)} $:
\begin{eqnarray*}
\Theta_j(t,T,\bzeta_j)\le |\bzeta_j|^{\alpha_j}\int_0^{\infty} \exp\left(- C^{-1} \, [u \, |\bzeta_j|^{\, \frac{\alpha}{ 1+\alpha(j-1)}}  ]^{1+\alpha(j-1)} \right)du 
\\ 
= |\bzeta_j|^{\alpha_j}  \frac{1}{|\bzeta_j|^{\alpha_j} }
 \int_0^{\infty} \exp\left(- C^{-1} \,  v^{1+\alpha(j-1)} \right)dv = C_0 < \infty.
\end{eqnarray*}
}
\def\stima 5{
Setting $v=u ^{1+\alpha(j-1)}|\bzeta_j|^\alpha$,  so that $du= \frac{1}{|\bzeta_j|^{\alpha_j} v^{\frac{(j-1)\alpha}{1+\alpha(j-1)} }
} \frac{dv}{1+(j-1)\alpha}$, gives:
\begin{eqnarray*}
\Theta_j(t,T,\bzeta_j)= \int_0^{2T^{1+(j-1)\alpha}|\bzeta_j|^\alpha} \exp\left(- C^{-1}v \right)\frac{dv}{(1+(j-1)\alpha) v^{\frac{(j-1)\alpha}{1+(j-1)\alpha}}	}\le C\int_0^{+\infty} \exp\left(- C^{-1}v \right)\frac{dv}{v^{\frac{(j-1)\alpha}{1+(j-1)\alpha}}	}\le C.
\end{eqnarray*}
}
We eventually get from \eqref{SEPARAZIONI_IN_S} by the Fubini theorem  
\begin{equation*}
\begin{split}
||\mathcal{F}(\Delta_{\x_j}^{\frac{\alpha_j}{2}}G_\varepsilon f)||_{L^2(\Sc)}^2\\
\le C \int_{-T}^T dt  \int_{\R^{N}} \Bigg(   |\bzeta_j|^{\alpha_j} 
 \,\int_{t}^T  |\mathcal{F}(f)(s,e^{-(s-t)A^*} \bzeta) |^2 \exp\left(- C^{-1}(s-t)| \M_{s-t}e^{-(s-t)A^*}\bzeta|^\alpha \right)ds\Bigg)d\bzeta
\\
= \int_{-T}^T ds \int_{-T}^{s} dt
 \int_{\R^{N}}    
  |\bzeta_j|^{\alpha_j} 
 \,  |\mathcal{F}(f)(s,e^{-(s-t)A^*} \bzeta) |^2 \exp\left(- C^{-1}(s-t)| \M_{s-t}e^{-(s-t)A^*}\bzeta|^\alpha \right) d\bzeta. 
 \end{split} 
\end{equation*}
Setting $\q=e^{-(s-t)A^*} \bzeta$ in the space integral now yields:
\begin{eqnarray*}
||\mathcal{F}(\Delta_{\x_j}^{\frac{\alpha_j}{2}}G_\varepsilon f)||_{L^2(\Sc)}^2
&\le& C
\int_{-T}^T ds \int_{-T}^{s} dt
 \int_{\R^{N}}    |(e^{(s-t)A^*}\q)_j|^{\alpha_j} 
 \,  |\mathcal{F}(f)(s,\q)|^2 \exp\left(-C^{-1}(s-t)|\M_{s-t} \q |^\alpha \right) d\q \\
 &\le & C \int_{-T}^T ds \int_{\R^N}  |\mathcal{F}(f)(s,\q)|^2 \, d\q \int_{-T}^s   |(e^{(s-t)A^*}\q)_j|^{\alpha_j} \exp\left(-C^{-1}(s-t)|  \M_{s-t} \q |^\alpha \right) \, dt
 \\
 &\le & C \int_{-T}^T ds \int_{\R^N}  |\mathcal{F}(f)(s,\q)|^2\bar \Theta_j(s,T,\q) d \q.
\end{eqnarray*}
\nero{ with  $ \bar \Theta_j(s,T,\q) = \int_{-T}^s   |(e^{(s-t)A^*}\q)_j|^{\alpha_j} \exp\left(-C^{-1}(s-t)|  \M_{s-t} \q |^\alpha \right) \, dt$. Now indicating with $\q^*$ the row vector 
we have (see \eqref{str})
\begin{align} \label{chi}  \nonumber
 |(e^{r A^*}\q)_j| =  |B_j^* e^{rA^*} [(\q^*)^*] |
 =  | (\q^* e^{rA} B_j)^*| = 
 | \q^* e^{rA} B_j| 
\\
 =  | \q^* \M_r e^{A} \M_r^{-1} B_j | = r^{-(j-1)} | \q^* \M_r e^{A}  B_j | =  r^{-(j-1)} | \,[\q_1,  r \q_2, \ldots , r^{n-1}\q_n]^* \, e^A B_j | 
\\  \nonumber
 \le c  r^{-(j-1)} (r^{j-1}|\q_j| + r^j|\q_{j+1}| + \ldots + r^{n-1}|\q_{n}|) = c \sum_{k =j}^n r^{k-j} |\q_k|, \;\; r >0, 
\end{align}
since $ B_j = \left(\begin{array}{c}
0_{d_1\times d_j}\\
\vdots\\
I_{d_j\times d_j}\\
\vdots\\
0_{d_n\times d_j}
\end{array} \right)$,  $e^{A}  B_j = (e^{A})_j  = \left(\begin{array}{c}0_{d_1\times d_j}\\
\vdots\\
0_{d_{j-1}\times d_j}\\
{K}_{j,j}(A)
\\
\vdots\\
K_{n,j}(A)
\end{array}\right)$  for some $K_{h,j}(A) \in \R^{d_h}\otimes \R^{d_j}, h =j, \ldots, n$. 
}
We get 
\begin{eqnarray*}
\bar \Theta_j(s,T,\q)&\le& C\sum_{k=j}^n \int_{-T}^s | (s-t)^{(k-j)}\q_k|^{\alpha_j}\exp(-C^{-1}(s-t)^{1+\alpha(k-1)}|\q_k|^\alpha)dt\\
&\le& C\sum_{k=j}^n \int_{0}^{\infty}  u^{(k-j) \alpha_j}|\q_k|^{\alpha_j}\exp(-C^{-1} \, [ u \cdot |\q_k|^{\alpha_k} \, ]^{1+\alpha(k-1)})du,
\end{eqnarray*}
for $|\q_k| \not =0$, recalling $\alpha_k = \frac{\alpha}{1 + \alpha(k-1) }$.
\nero{ Setting $u   \cdot |\q_k|^{\alpha_k} = v$ in each integral we find 
\begin{align*}
 \bar \Theta_j(s,T,\q)\le 
 C\sum_{k=j}^n   |\q_k|^{\alpha_j - \alpha_k - (k-j) \alpha_j \alpha_k }\ \int_{0}^{\infty}  v^{(k-j) \alpha_j} \exp(-C^{-1} \, [ v \, ]^{1+\alpha(k-1)})dv \le n C_1 < \infty,
\end{align*}
since $\alpha_j - \alpha_k - (k-j) \alpha_j \alpha_k =0$, for any $ j \le k \le n$. }
\def\stima8{
Setting for each $k\in \leftB j,n\rightB $, $v^{jk}:=u^{1+(k-1)\alpha}|\q_k|^\alpha $, so that $u^{k-j}du=\frac{dv^{jk}}{|\q_k|^{\alpha_j}}  \frac{1}{(v^{jk})^{\frac{(k-1)\alpha}{1+(k-1)\alpha}}}(v^{jk})^{\frac{k-j}{1+(k-1)\alpha}\alpha_j}$, gives:
\begin{eqnarray*}
\Theta_j(s,T,\q)\le C \sum_{k=j}^n\int_0^{2T^{1+(k-1)\alpha}|q_k|^\alpha} \exp(-C^{-1}v^{jk}) (v^{jk})^{\frac{k-j}{1+(k-1)\alpha}\alpha_j}\frac{dv^{jk}}{(v^{jk})^{\frac{(k-1)\alpha}{1+(k-1)\alpha}}}\le C.
\end{eqnarray*}
}
We eventually get:
\begin{eqnarray*}
||\mathcal{F}(\Delta_{\x_j}^{\frac{\alpha_j}{2}}G_\varepsilon f)||_{L^2(\Sc)}^2
\le C  \int_{-T}^T \int_{\R^{N}}|\mathcal{F}(f)(s,\q)|^2 d\q ds = C ||\mathcal{F}(f)||_{L^2(\Sc)}^2.
\end{eqnarray*}
The assertion now follows for $f\in {\mathscr S}(\R^{1+N}) $ from Plancherel's lemma. The result for $f\in L^2(\R^{1+N}) $ is derived by density. \qed

\subsection{Proof of Lemma \ref{LEMME_DEV}}
\label{SEC_PROV_DEV}

To establish Lemma \ref{LEMME_DEV}, we will thoroughly exploit the important relation \eqref{CORRESP_L_S} for the marginals between the degenerate Ornstein-Uhlenbeck process $\Lambda^\x$ and the non degenerate $\R^N $-valued stable process $S$ introduced  in Remark \ref{ID_LAW}.
  Setting
\begin{equation}
\label{DEF_RHO_GAMMA}
\rho:=\rho(s-t,e^{(s-t)A}\x-\y) ,\ \gamma:=\rho(\sigma-t,e^{(\sigma-t)A}\x-\bxi) ,
\end{equation}
 we focus for $i\in \leftB 1,n\rightB $ on the quantities: 
\begin{eqnarray}
I_i&:=&\int_{s\ge t\vee \sigma , \, \rho\ge K \gamma }|\Delta_{\x_i}^{\frac{\alpha_i}{2}}p_\Lambda (s-t,\x,\y)-\Delta_{\x_i}^{\frac{\alpha_i}{2}}p_\Lambda (s-\sigma,\bxi,\y) | d\y ds\nonumber\\
&=&\int_{s\ge t\vee \sigma , \, \rho\ge K \gamma }\Big|\Delta_{\x_i}^{\frac{\alpha_i}{2}}\frac{1}{\det(\M_{s-t})}p_S \big(s-t,\M_{s-t}^{-1}(e^{(s-t)A}\x-\y)\big)
\nonumber\\  &-& \Delta_{\x_i}^{\frac{\alpha_i}{2}}\frac{1}{\det(\M_{s-\sigma})}p_S \big(s-\sigma,\M_{s-\sigma}^{-1}(e^{(s-\sigma)A}\bxi-\y)\big)\Big|d\y ds
\label{DEF_I}
\end{eqnarray}
 (cf. Proposition \ref{THE_PROP_FUND} and equation \eqref{CORRESP_L_S} for the last identity).
 Recalling the important correspondence \eqref{CORRISPONDENZA}, the quantity $I_i$ in \eqref{DEF_I} then rewrites:
\begin{eqnarray}
I_i=\int_{s\ge t\vee \sigma , \, \rho\ge K \gamma }\Big|\frac{\Delta^{\frac{\alpha_i}{2},A,i,s-t} p_S \big(s-t,\M_{s-t}^{-1}(e^{(s-t)A}\x-\y)\big)}{\det(\M_{s-t})}-\frac{\Delta^{\frac{\alpha_i}{2},A,i,s-\sigma} p_S \big(s-\sigma,\M_{s-\sigma}^{-1}(e^{(s-\sigma)A}\bxi-\y)\big)}{\det(\M_{s-\sigma})}\Big| d\y ds\nonumber\\
=\int_{\rho\ge K \gamma }\Big|\frac{\Delta^{\frac{\alpha_i}{2},A,i,s-t}p_S \big(s-t,\M_{s-t}^{-1}(e^{(s-t)A}\x-\y)\big) }{\det(\M_{s-t})}-\frac{\Delta^{\frac{\alpha_i}{2},A,i,s-\sigma}p_S \big(s-\sigma,\M_{s-\sigma}^{-1}(e^{(s-t)A}\x-\y)\big) }{\det(\M_{s-\sigma})}\Big| d\y ds \nonumber\\
+\int_{s\ge t\vee \sigma , \, \rho\ge K \gamma }
\Big|\frac{\Delta^{\frac{\alpha_i}{2},A,i,s-\sigma}p_S \big(s-\sigma,\M_{s-\sigma}^{-1}(e^{(s-t)A}\x-\y)\big)}{\det(\M_{s-\sigma})}-\frac{\Delta^{\frac{\alpha_i}{2},A,i,s-\sigma}p_S \big(s-\sigma,\M_{s-\sigma}^{-1}(e^{(s-\sigma)A}\bxi-\y)\big)}{\det(\M_{s-\sigma})}
 \Big| d\y ds\nonumber\\
 =:(I_{i,T}+I_{i,S}).\label{SPLITTING_TIME_SPACE}
\end{eqnarray}
Hence, to prove that $I_i$ is bounded we need to investigate the time and space sensitivities of $\Delta^{\frac {\alpha_i} 2,A,i,\cdot} p_S$. This is the purpose of the next subsection.

\subsubsection{Preliminary Estimates}
We begin with the following result.
\begin{lemma}[Bounds and Sensitivities of the Stable Singular Kernel]
\label{SENS_SING_STAB}
Let $(S_t)_{t\ge 0}$ be a symmetric $\alpha$-stable process in $\R^N$ with non degenerate L\'evy measure $\nu_S $, i.e. its spectral measure $\mu_S$ satisfies that there  exists $\eta\ge 1$ s.t. for all $  \p \in \R^N $:
$$\eta^{-1}|\p|^\alpha\le \int_{\S^{N-1}} |\langle \p, \s\rangle |^\alpha \mu_S(d\s)\le \eta|\p|^\alpha.$$
$\alpha \in (0,2)$. This condition amounts to say that $(S_t)_{t\ge 0}$ satisfies assumption \A{ND} in dimension $N$.
 In particular, this implies that for all $t>0 $, $S_t $ has a smooth density that we denote by $p_S(t,\cdot) $ \nero{ (cf. Remark \ref{ID_LAW}).}

There exists a family of probability densities
$\big( q(t,\cdot)\big)_{t>0}$   on $\R^N $ such that \nero{ $q(t,\x) = {t^{-N/ \alpha}} \, q (1, t^{-\frac 1\alpha} \x),$ $ t>0, $ $\x \in \R^N$},  
for all $\delta \in [0,\alpha) $, there exists a constant $C_{\delta}:=C_{\delta}(\A{A})>0$ s.t. 
\begin{equation}
\label{INT_Q}
\int_{\R^N}q(t,\x)|\x|^\delta d\x
\le C_{\delta}t^{\frac{\delta}{\alpha}},\;\;\; t>0,
\end{equation}
and the following controls hold:
\begin{trivlist}
\item[\textit{(i)}] There exists $C:=C(\A{A})$ s.t. for all $i\in \leftB 1,n\rightB $ and $t>0,\ \x\in \R^N $:
\begin{equation}
\label{CTR_BOUND_SING_KER}
|\Delta^{\frac{\alpha_i} 2,A,i,t}p_S(t,\x)|\le \frac{C}{t} q(t,\x).
\end{equation}
\item[\textit{(ii)}] 
For all $\beta \in (0,1] $, there exists $C:=C(\beta,\A{A})$ s.t. for all $i\in \leftB 1,n\rightB  $ and $t>0,\ (\x,\x')\in \R^{2N} $:
\begin{equation}
\label{CTR_SENSI_HOLDER}
|\Delta^{\frac{\alpha_i} 2,A,i,t}p_S(t,\x)-\Delta^{\frac{\alpha_i} 2,A,i,t}p_S(t,\x')|\le \frac{C}{t}
\left(\frac{|\x-\x'|}{t^{\frac{1}{\alpha}}}\right)^\beta\big[q(t,\x)+q(t,\x')\big].
\end{equation}
\item[\textit{(iii)}] 
There exists $C:=C(\A{A})$ s.t. for all $i\in \leftB 1,n\rightB $ and  $t>0,\ \x\in \R^N $:
\begin{equation}
\label{CTR_DER_TEMPS_SING_KER}
|\partial_t \Delta^{\frac{\alpha_i} 2,A,i,t}p_S(t,\x)|\le \frac{C}{t^2} q(t,\x).
\end{equation}
\end{trivlist}
\end{lemma}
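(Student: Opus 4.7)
The backbone of the proof is a scaling reduction to $t=1$. Using the self-similarity $p_S(t,\x)=t^{-N/\alpha} p_S(1, t^{-1/\alpha} \x)$, the symmetry of $dz/|z|^{d_i+\alpha_i}$ on $\R^{d_i}$—so that the gradient correction term in \eqref{DEF_DELTA_MOD} vanishes by odd symmetry on $\{|z|\le 1\}$—and the substitution $z = t^{1/\alpha_i} w$ (recalling $1/\alpha_i = 1/\alpha + (i-1)$), one obtains the clean identity
\begin{equation*}
\Delta^{\frac{\alpha_i}{2},A,i,t} p_S(t, \x) \;=\; t^{-\frac{N}{\alpha}-1}\, F_i(t^{-1/\alpha} \x), \qquad F_i(\y) \;:=\; \Delta^{\frac{\alpha_i}{2},A,i,1} p_S(1, \y).
\end{equation*}
I accordingly set $q(t,\x) := t^{-N/\alpha} q_0(t^{-1/\alpha} \x)$ for a probability density $q_0$ on $\R^N$ to be fixed below, so that the moment estimate \eqref{INT_Q} reduces by scaling to $\int q_0(\y)|\y|^\delta d\y < \infty$ for every $\delta \in [0,\alpha)$. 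The three estimates (i), (ii), (iii) then reduce, via this rescaling and an explicit $\partial_t$-computation for (iii), to pointwise bounds on $F_i$ and its derivatives of the form $|F_i(\y)| \le C q_0(\y)$, $|F_i(\y)-F_i(\y')| \le C|\y-\y'|^\beta[q_0(\y)+q_0(\y')]$, and $|\y\cdot\nabla F_i(\y)|\le Cq_0(\y)$.

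For the pointwise bound on $F_i$, I split the $w$-integration into $\{|w|\le 1\}$ and $\{|w|>1\}$. On the first set, a second-order Taylor expansion of $p_S(1,\cdot)$ combined with $\int_{|w|\le 1}|w|^{2-d_i-\alpha_i}dw < \infty$ (valid since $\alpha_i < 2$) reduces the question to a uniform bound on $D^2 p_S(1, \y+s(e^A)_i w)$ for $|w|\le 1$, $s\in[0,1]$. On the second set, I dominate the difference by $p_S(1, \y + (e^A)_i w) + p_S(1, \y)$, treating the first summand by further splitting according to whether $|(e^A)_iw|$ is comparable to $|\y|$ or not, using $\|p_S(1,\cdot)\|_{L^1}=1$ together with tail decay. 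The analytic input, and the main obstacle, is that although the spectral measure $\mu_S$ on $\S^{N-1}$ may be very singular, the non-degeneracy \A{ND} still gives $\exp(-\psi_S(\xi)) \le \exp(-c|\xi|^\alpha)$, so the Fourier inversion formula \eqref{THE_DENS_OU} followed by $k$-fold integration by parts in the frequency variable yields the Schwartz-type bound
\begin{equation*}
|D^k p_S(1,\y)| \;\le\; C_{k,m}\,(1+|\y|)^{-m}, \qquad k,m \ge 0.
\end{equation*}
Choosing $q_0(\y) := c_0(1+|\y|)^{-N-\alpha_0}$ for some fixed $\alpha_0 \in (0,\alpha)$ and normalized so that $q_0$ is a probability density, the bound $|F_i(\y)|\le Cq_0(\y)$ then follows from the splitting argument above.

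The H\"older estimate (ii) is obtained by interpolating (i) with a gradient bound $|\nabla F_i(\y)|\le Cq_0(\y)$, itself derived from the same splitting (third-order Taylor on $\{|w|\le 1\}$ and one extra differentiation of $p_S$ on $\{|w|>1\}$) together with the Schwartz-type bounds above. For $|\x-\x'|\ge t^{1/\alpha}$, (ii) reduces to (i) applied twice since $(|\x-\x'|/t^{1/\alpha})^\beta \ge 1$; for $|\x-\x'|\le t^{1/\alpha}$, a mean value argument on the rescaled variables $\y = t^{-1/\alpha}\x,\y' = t^{-1/\alpha}\x'$, the doubling property of $q_0$ along the segment, and $\beta \le 1$ conclude. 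Finally, differentiating the scaling identity of Paragraph~1 in $t$ gives
\begin{equation*}
\partial_t \Delta^{\frac{\alpha_i}{2},A,i,t} p_S(t,\x) \;=\; t^{-\frac{N}{\alpha}-2}\Big[-\Big(\tfrac{N}{\alpha}+1\Big)F_i(\y) - \tfrac{1}{\alpha}\,\y\cdot\nabla F_i(\y)\Big], \qquad \y = t^{-1/\alpha}\x,
\end{equation*}
so (iii) reduces to $|\y\cdot\nabla F_i(\y)|\le Cq_0(\y)$, which follows upon strengthening the gradient bound to $|\nabla F_i(\y)| \le Cq_0(\y)/(1+|\y|)$; this refinement is a byproduct of one further integration by parts in the Schwartz-type estimate for $D^k p_S$, which gives arbitrary polynomial decay.
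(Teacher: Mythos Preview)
Your scaling reduction to $t=1$ is correct and elegant: the identity $\Delta^{\frac{\alpha_i}{2},A,i,t} p_S(t,\x) = t^{-N/\alpha-1} F_i(t^{-1/\alpha}\x)$ holds as you claim, and would in principle reduce (i)--(iii) to pointwise bounds on $F_i$ and its derivatives.

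The fatal gap is the claimed Schwartz-type bound $|D^k p_S(1,\y)| \le C_{k,m}(1+|\y|)^{-m}$ for all $m\ge 0$. This is \emph{false}: an $\alpha$-stable random variable with $\alpha<2$ satisfies $\E|S_1|^\delta=\infty$ for every $\delta\ge\alpha$, so $p_S(1,\cdot)$ cannot decay faster than $|\y|^{-N-\alpha}$ at infinity and is never in the Schwartz class. The integration by parts in the frequency variable that you invoke breaks down because the L\'evy exponent $\psi_S(\p)=\int_{\S^{N-1}}|\langle\p,\s\rangle|^\alpha\mu_S(d\s)$ is only homogeneous of degree $\alpha$ and hence not smooth at $\p=0$; worse, for the \emph{singular} spectral measures $\mu_S$ that actually arise here (cf.\ the footnote after Proposition~\ref{THE_PROP_FUND} and example~\eqref{r44}), $\psi_S$ need not be smooth even on $\R^N\setminus\{0\}$. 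In the cylindrical example $\psi_S(\p)=|p_1|^\alpha+|p_2|^\alpha$, the density factorises as a product of one-dimensional stable densities and along a coordinate axis decays only like $|\y|^{-1-\alpha}$, strictly slower than $|\y|^{-N-\alpha}$. Thus even the weaker pointwise bound $p_S(1,\y)\le C(1+|\y|)^{-N-\alpha}$ can fail, and your splitting argument for the $\{|w|>1\}$ region cannot be closed against a single polynomial-decay majorant $q_0$. (The choice $q_0\sim(1+|\y|)^{-N-\alpha_0}$ with $\alpha_0<\alpha$ is in any case too weak for \eqref{INT_Q} to hold for every $\delta<\alpha$, but that is secondary.)

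The paper circumvents pointwise control of $p_S$ entirely. For each fixed $t$ it performs the It\^o--L\'evy split $S_t=M_t+N_t$ at the characteristic scale $t^{1/\alpha}$; the small-jump part $M_t$ \emph{does} have a Schwartz density (Lemma~\ref{EST_DENS_MART}) and carries all the differentiation, while the large-jump part $N_t$ enters only through convolution. The density $q(t,\cdot)$ is not an explicit polynomial but is \emph{constructed} as a finite sum of densities of random variables of the form $\bar M_t+N_t+t^{-(i-1)}(e^A)_i\Gamma_t^i$ (see \eqref{we} and \eqref{DEF_bar_S1}); each summand inherits the $\alpha$-self-similarity and the moment condition \eqref{INT_Q} from its building blocks, without any pointwise majorisation of $p_S$ being needed.
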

 \begin{remark}
\label{NOTATION_Q} \nero{
From now on, for the family of stable densities $\big(q(t,\cdot)\big)_{t>0} $, 
we also use the notation  $q(\cdot):=q(1,\cdot) $,  i.e. without any specified argument $q(\cdot)$ stands for the density $q$ at time $1$.}
\end{remark} 
\begin{proof}
\nero{It is enough to find  a suitable  $q$ for each estimate \eqref{CTR_BOUND_SING_KER}, \eqref{CTR_SENSI_HOLDER} and \eqref{CTR_DER_TEMPS_SING_KER};  summing up such densities one gets the required final $q$.}

Let us first write for all $i\in \leftB 1,n\rightB $ (cf. \eqref{DEF_DELTA_MOD}):
\begin{eqnarray}  
\Delta^{\frac {\alpha_i} 2,A,i,t} p_S(t,\x)=\int_{\R^{d_i}}\Big(p_S(t,\x+t^{-(i-1)}(e^A)_iz)-p_S(t,\x)\Big)\frac{dz}{|z|^{d_i+\alpha_i}} \notag\\
=\int_{\R^{d_i}}\Big( p_S(t,\x+t^{-(i-1)}(e^A)_iz)-p_S(t,\x)\Big)\I_{|z|\le t^{\frac 1{\alpha_i}}}\frac{dz}{|z|^{d_i+\alpha_i}}\notag\\ \nonumber
+\int_{\R^{d_i}}\Big(p_S(t,\x+t^{-(i-1)}(e^A)_iz)-p_S(t,\x)\Big)\I_{|z|> t^{\frac 1{\alpha_i}}}\frac{dz}{|z|^{d_i+\alpha_i}}
\\ \label{DICO_PICCOLI_GRANDI_SALTI_DI_A}
=: \LALS{i}{t} p_S(t,\x)+\LALB{i}{t} p_S(t,\x), 
\end{eqnarray}
where $\LALS{i}{t}$ (resp. $\LALB{i}{t} $) corresponds to the \textit{small} jumps  (resp. to the \textit{large} jumps) part of $\LAL{i}{t} $.

 Let us recall that, for a given fixed $t>0$, we can use an It\^o-L\'evy  decomposition
 at the associated  characteristic stable time scale (i.e. the truncation is performed at the threshold $t^{\frac {1} {\alpha}} $) 
to write $S_t:=M_t+N_t$
where $M_t$ and $N_t $ are independent random variables \nero{ (we are considering a probability space $(\Omega,\F,\P) $ on which the process $S = (S_s)_{s\ge 0} $ is defined; $\E $ denotes the associated expectation).}  More precisely, 
 \begin{equation} \label{dec}
 N_s = \int_0^s \int_{ |x| > t^{\frac {1} {\alpha}} }
\; x  P(du,dx), \;\;\; \; M_s = S_s - N_s, \;\; s \ge 0,
 \end{equation} 
where $P$ is the  Poisson random measure associated with the process $S$; for the considered fixed $t>0$,
 $M_t$ and $N_t$ correspond to
 the \textit{small jumps part } and
\textit{large jumps part} respectively. 
A similar decomposition has been already used in
 \cite{wata:07},        \cite{szto:10} 
 and \cite{huan:meno:15}. It is useful to note that the cutting threshold in \eqref{dec} precisely yields for the considered $t>0$ that:
\begin{equation} \label{ind}
N_t  \overset{({\rm law})}{=} t^{\frac 1\alpha} N_1 \;\; \text{and} \;\;
M_t  \overset{({\rm law})}{=} t^{\frac 1\alpha} M_1.
\end{equation}  
To check the assertion about $N$ we start with 
$$
\E [e^{i \langle \p , N_t \rangle}] = 
\exp \Big(  t
\int_{\S^{N-1}} \int_{t^{\frac 1\alpha}}^{\infty}
 \Big(\cos (\langle \p, r\bxi \rangle)  - 1  \Big) \, \frac{dr}{r^{1+\alpha}}\tilde \mu_{S}(d\bxi) \Big), \;\; \p \in \R^N
$$
(see  \eqref{STABLE_MEAS} and \cite{sato}). Changing variable $\frac{r}{t^{\frac 1\alpha}} =s$
we get that $\E [e^{i \langle \p , N_t \rangle}]$ $= \E [e^{i \langle \p , t^{\frac 1\alpha} N_1 \rangle}]$ for any $\p \in \R^N$ and this shows the assertion (similarly we get
the statement for $M$).
The density of $S_t$ then writes
\begin{equation}
\label{DECOMP_G_P}
p_S(t,\x)=\int_{\R^N} p_{M}(t,\x-\bxi)P_{N_t}(d\bxi),
\end{equation}
where $p_M(t,\cdot)$ corresponds to the density of $M_t$ and $P_{N_t}$ stands for the law of $N_t$. \nero{From Lemma \ref{EST_DENS_MART} (see as well
Lemma B.1  }
in \cite{huan:meno:15}),  $p_M(t,\cdot)$  belongs to the Schwartz class ${\mathscr S}(\R^N) $ and satisfies that for all $m\ge 1 $ and all { multi-index $\i=(i^1,\cdots, i^N) \in \N^N , \ |\i|:=\sum_{j=1}^N  i_j\le 3$,} there exists $C_{m,\i}$ s.t. for all $(t,\x)\in \R_+^*\times \R^N  $:
\begin{equation}
\label{CTR_DER_M}
\nero{
|\partial_\x^\i p_M(t,\x)|\le \frac{\bar C_{m,\i}}{t^{\frac{|\i|}{\alpha} }} \, p_{\bar M}(t,\x),\;\; \text{where} \;\; p_{\bar M}(t,\x)
:=
\frac{C_{m}}{t^{\frac{N}{\alpha}}} \left( 1+ \frac{|\x|}{t^{\frac 1\alpha}}\right)^{-m}
}
\end{equation}
where the above modification of the constant is performed in order that {\it $p_{\bar M}(t,\cdot ) $ be a probability density.}
 Note that the asymptotic decay of $p_{\bar M} $ here depends on on the integer $m$ considered. For our analysis, recalling from Remark \ref{ID_LAW} and equation \eqref{CORRESP_L_S} that we are disintegrating the density of a non degenerate stable process in dimension $N$, and that we are led to investigate sensitivities, which involve for the small jumps derivatives up to order 2 or 3 (depending on $\alpha \in (0,1) $ or $\alpha \in [1,2) $), see e.g. \eqref{CTR_SMALL_DIAG} below, we can fix
 $$m:=N+4.$$
Let us emphasize that, to establish the indicated results, as opposed to \cite{huan:meno:15}, we only focus on integrability properties and not on pointwise density estimates. Our global approach therefore consists in exploiting \nero{\eqref{dec},}  \eqref{DECOMP_G_P} and \eqref{CTR_DER_M}.
The various sensitivities will be expressed through derivatives of $p_M(t,\cdot)$, which also gives the corresponding time singularities. However, as for general stable processes, the integrability restrictions come from the large jumps (here $N_t $) and only depend on its index $\alpha$.
A crucial point then consists in observing  that the convolution $\int_{\R^N}p_{\bar M}(t,\x-\bxi)P_{N_t}(d\bxi) $ actually corresponds to the density of the random variable 
\begin {equation} \label{we}
\bar S_t:=\bar M_t+N_t,\;\; t>0 
\end{equation}
 (where $\bar M_t $ has density $p_{\bar M}(t,.)$ and is independent of $N_t $; \nero{to have such decomposition one can define each $\bar S_t$ on a product probability space}). Then, the integrability properties of $\bar M_t+N_t $, and more generally of all random variables appearing below, come from those of $\bar M_t $ and $N_t$. 

One can easily check that $p_{\bar M}(t,\x) = {t^{-\frac N \alpha}} \, p_{\bar M} (1, t^{-\frac 1\alpha} \x),$ $ t>0, \, $ $\x \in \R^N.$  Hence 
$$
\bar M_t  \overset{({\rm law})}{=} t^{\frac 1\alpha} \bar M_1,\;\;\; N_t  \overset{({\rm law})}{=} t^{\frac 1\alpha}  N_1.
$$
By independence of $\bar M_t$ and $N_t$, using the Fourier transform, one can easily prove that 
\begin{equation} \label{ser1}
\bar S_t  \overset{({\rm law})}{=} t^{\frac 1\alpha} \bar S_1.
\end{equation} 
Moreover, 
$
\E[|\bar S_t|^\delta]=\E[|\bar M_t+N_t|^\delta]\le C_\delta t^{\frac\delta \alpha}(\E[|\bar M_1|^\delta]+\E[| N_1|^\delta])\le C_\delta t^{\frac\delta \alpha}, \; \delta \in (0,\alpha).
$ 
This shows that the density of $\bar S_t$ verifies \eqref{INT_Q}. 
We now give the details of the computations in case \textit{(ii)}. 
This case contains the main difficulties, the other ones can be derived similarly. Write for all $t>0,(\x,\x')\in \R^{2N} $ (cf. \eqref{DICO_PICCOLI_GRANDI_SALTI_DI_A}):
\begin{eqnarray}
\LALB{i}{t} p_S(t,\x)-\LALB{i}{t} p_S(t,\x')\notag\\
=\int_{|z|\ge t^{\frac 1{\alpha_i}}}[p_S(t,\x+t^{-(i-1)}(e^A)_iz)-p_S(t,\x'+t^{-(i-1)}(e^A)_iz)] \frac{dz}{|z|^{d_i+\alpha_i}}\nonumber\\
\nero{-}\Big(p_S(t,\x)-p_S(t,\x')\Big)\int_{|z|\ge t^{\frac 1{\alpha_i}}}\frac{dz}{|z|^{d_i+\alpha_i}},\nonumber\\
\Big|\LALB{i}{t} p_S(t,\x)-\LALB{i}{t} p_S(t,\x')\Big|\notag\\
\le \left(\int_{|z|\ge t^{\frac 1{\alpha_i}}} |p_S(t,\x+t^{-(i-1)}(e^A)_iz)-p_S(t,\x'+t^{-(i-1)}(e^A)_iz)|\frac{dz}{|z|^{d_i+\alpha_i}} \right)\nonumber\\
+\left(\frac{C}t \Big|p_S(t,\x)-p_S(t,\x') \Big|\right)
=:
(I_{i,1}+I_{2})(t,\x,\x').\label{CTR_PREAL}
 \end{eqnarray}
 Assume for a while that the following control holds. For all \nero{$\beta\in (0,1] $} there exists a constant $C:=C_\beta $ s.t. for all $t>0,\ (\x,\x')\in \R^{2N} $,
 \begin{equation}
 \label{HOLDER_S}
|p_S(t,\x)-p_S(t,\x')|\le C \left(\frac{|\x-\x'|}{t^{\frac 1\alpha}}\right)^\beta \big(p_{\bar S}(t,\x)+p_{\bar S}(t,\x')\big),
 \end{equation}
\nero{where  $p_{\bar S}(t,\cdot) $ stands for the density of $\bar S_t $
which verifies  \eqref{INT_Q}.}
  From \eqref{CTR_PREAL} and \eqref{HOLDER_S}   we readily derive:
\begin{equation}
\label{CTR_I2}
|I_2(t,\x,\x')|\le \frac{C}{t}\left( \frac{|\x-\x'|}{t^{\frac 1 \alpha}}\right)^\beta \big(p_{\bar S}(t,\x)+p_{\bar S}(t,\x') \big).
\end{equation}
Also, still from \eqref{CTR_PREAL} and \eqref{HOLDER_S},
\begin{eqnarray*}
|I_{i,1}(t,\x,\x')|&\le& C \left( \frac{|\x-\x'|}{t^{\frac 1 \alpha}}\right)^\beta
\int_{|z|\ge t^{\frac 1{\alpha_i}}}  \big(p_{\bar S}(t,\x+t^{-(i-1)}(e^A)_iz)+p_{\bar S}(t,\x'+t^{-(i-1)}(e^A)_iz) \big) \frac{dz}{|z|^{d_i+\alpha_i}}\\
&=:&\frac{C}t \left( \frac{|\x-\x'|}{t^{\frac 1 \alpha}}\right)^\beta
\int_{\R^{d_i}}  \big(p_{\bar S}(t,\x+t^{-(i-1)}(e^A)_iz)+p_{\bar S}(t,\x'+t^{-(i-1)}(e^A)_i z) \big) f_{\Gamma^i}(t,z)(dz),
\end{eqnarray*}
\nero{setting $f_{\Gamma^i}(t,z):=t c_{\alpha_i,d_i}\I_{|z|\ge t^{\frac 1{\alpha_i}}}\frac{1}{|z|^{d_i+\alpha_i}} $} with $c_{\alpha,d_i}>0$ s.t. $\int_{\R^{d_i}}f_{\Gamma^i}(t,z) dz=1 $. Hence, $f_{\Gamma^i}(t, \cdot) $ is the density of an $\R^{d_i}$-valued random variable $\Gamma_t^i $. The above integrals  can thus be seen as the densities, at point $\x$ and $\x'$ respectively, of the random variable
\begin{equation}
\label{DEF_bar_S1}
\bar S_t^{i,1}:=\bar S_t+t^{-(i-1)}(e^A)_i\Gamma_t^i,
\end{equation}
\nero { where $\bar S_t$ is as in \eqref{we} and $\Gamma_t^i$ is independent of $\bar S_t$ and has density $f_{\Gamma^i}(t, \cdot) $. Note that 
\begin{eqnarray*}
f_{\Gamma^i}(t,z)      =c_{\alpha_i,d_i} \I_{\frac{|z|}{t^{\frac 1{\alpha_i}}}\ge 1} \Big(\frac{|z|}{t^{\frac1{\alpha_i}}}\Big)^{- {d_i} - {\alpha_i}} t^{-\frac {d_i}{\alpha_i}}={t^{- \frac {d_i}{\alpha_i}}} f_{\Gamma^i}\Big(1,\frac{z}{t^{\frac1{\alpha_i}}}\Big).
 \end{eqnarray*}
If we set 
$\tilde \Gamma_t^i:=(t^{-(i-1)})\Gamma_t^i $ and denote by $f_{\tilde \Gamma^i}(t,\cdot)$ its density, we find 
\begin{eqnarray*}
f_{\tilde \Gamma^i}(t,z)=t^{d_i(i-1)}f_{\Gamma^i}(t,z t^{i-1})=t^{d_i(i-1)}t^{-\frac{d_i}{\alpha_i}}f_{\Gamma^i}(1,z t^{i-1}/t^{\frac1{\alpha_i}}):=\frac1{t^{d_i(-(i-1)+\frac 1{\alpha_i})}}f_{\Gamma^i}\Big(1,\frac{z}{t^{-(i-1)+\frac 1{\alpha_i}}}\Big), 
\end{eqnarray*}
 $z \in \R^{d_i}.$ Since $\alpha_i=\frac{\alpha}{1+\alpha(i-1)} $, we have
\begin{eqnarray*}
f_{\tilde \Gamma^i}(t,z)=\frac{1}{t^{\frac{d_i}{\alpha}}} f_{\Gamma^i}(1, \frac{z}{t^{\frac 1\alpha}}).
\end{eqnarray*}
It follows that $(e^A)_i \tilde \Gamma_t^i \overset{({\rm law})}{=}
t^{\frac 1\alpha} (e^A)_i \tilde \Gamma_1^i $. Hence arguing as for \eqref{ser1} we find 
\begin{equation} \label{dee}
\bar S_t^{i,1}:=\bar S_t+t^{-(i-1)}(e^A)_i\Gamma_t^i  \overset{({\rm law})}{=} t^{\frac 1\alpha} \bar S_1^{i,1}
\end{equation}
and, moreover, for any $ \delta \in (0,\alpha)$,
\begin{eqnarray}
\label{INT_2}
\E[|\bar S_t^{i,1}|^\delta]&\le& C_\delta(\E[|\bar S_t|^\delta]+C_A\, t^{-(i-1)\delta}\E[|\Gamma_t^i|^\delta])
 \le  C_{\delta,A}t^{\frac{\delta}{\alpha}}.
\end{eqnarray}
as required in \eqref{INT_Q}.  } We finally obtain,
\begin{equation}
\label{CTR_I1}
|I_{i,1}(t,\x,\x')|\le \frac Ct \left( \frac{|\x-\x'|}{t^{\frac 1 \alpha}}\right)^\beta \big(p_{\bar S^{i,1}}(t,\x)+p_{\bar S^{i,1}}(t,\x')\big).
\end{equation}
The control for $|\LALB{i}{t} p_S(t,\x)-\LALB{i}{t} p_S(t,\x')| $ follows plugging \eqref{CTR_I1} and \eqref{CTR_I2} into \eqref{CTR_PREAL} defining $q(t,\cdot):= 
\frac{1}{n+1}(\sum_{i=1}^n p_{\bar S^{i,1}}+p_{\bar S})(t,\cdot).$


It remains to control the difference associated with the \textit{small jumps} part. \textcolor{black}{We first recall that for any $\alpha=\alpha_1 \in(0,2) $, we have that for $i\in \leftB 2,n\rightB, \alpha_i\in (0,1) $. Also, we consider first for simplicity the case $\alpha_1=\alpha\in (0,1) $. Write then}, using also \eqref{DECOMP_G_P},
\begin{eqnarray*} 
&&\LALS{i}{t} p_S(t,\x)-\LALS{i}{t} p_S(t,\x')\nonumber\\
&=&\int_{|z|\le t^{\frac 1{\alpha_i}}}\Big[\big(p_S(t,\x+t^{-(i-1)}(e^A)_iz)-p_S(t,\x)\big) -\big(p_S(t,\x'+t^{-(i-1)}(e^A)_iz)-p_S(t,\x')\big)\Big] \frac{dz}{|z|^{d_i+\alpha_i}}\nonumber\\
&=&\int_{|z|\le t^{\frac 1{\alpha_i}}} \int_{\R^N} \int_0^1 d\mu \Big(\nabla p_{ M}(t,\x+\mu t^{-(i-1)}(e^A)_iz -\bxi)-\nabla p_{M}(t,\x'+\mu t^{-(i-1)}(e^A)_iz-\bxi)\Big)\cdot  t^{-(i-1)}(e^A)_iz\notag\\
&& \;\;\;\; \times \,P_{N_t}(d\bxi) \frac{dz}{|z|^{d_i+\alpha_i}}.
\end{eqnarray*}
In the sequel  we will use \eqref{CTR_DER_M} \nero{with $|{\mathbf i}|=2 $ and the following inequality
\begin{equation} \label{ste}
p_{\bar M}(t,\y+ \bzeta)\le C_m p_{\bar M}(t,\y),\;\;\; \text{if} \;\; |\bzeta|\le 2t^{\frac 1\alpha},\;\; t>0,\; \y,\, \bzeta \in \R^N,
\end{equation} 
 for some $C_m>0$.
This can be  easily proved considering separately the cases $|\y| \le 4t^{\frac 1\alpha}$ and 
$|\y| > 4t^{\frac 1\alpha} $ (if $|\y| > 4t^{\frac 1\alpha} $ one can use $\frac{|\y + \bzeta|}{t^{\frac 1\alpha}}$ $ \ge \frac{|\y| - |\bzeta|}{t^{\frac 1\alpha}} $ $\ge \frac{|\y|}{2t^{\frac 1\alpha}} $; if  $|\y| \le 4t^{\frac 1\alpha} $ one can use $ 1 \ge \frac{|\y|}{ 4t^{\frac 1\alpha}}$)}.  

We derive if $|\x-\x'|\le t^{\frac{1}{\alpha}} $:
\begin{align} \label{CTR_SMALL_DIAG}
|\LALS{i}{t} p_S(t,\x)-\LALS{i}{t} p_S(t,\x')|\nonumber\\
\le
C_m\frac{|\x-\x'|}{t^{\frac 2 \alpha}}
\int_{|z|\le t^{\frac 1{\alpha_i}}} \int_{\R^N} \int_0^1 d\mu\int_0^1 [ p_{\bar M}\big(t,\x'+\mu t^{-(i-1)}(e^A)_iz+\lambda(\x-\x') -\bxi\big) ] \, d\lambda \,  P_{N_t}(d\bxi)|z|t^{-(i-1)} \frac{dz}{|z|^{d_i+\alpha_i}}
\nonumber\\
\le
C_m \frac{|\x-\x'|}{t^{\frac 2\alpha}}  \int_{|z|\le t^{\frac 1{\alpha_i}}} \int_{\R^N} p_{\bar M}(t,\x'-\bxi)P_{N_t}(d\bxi)t^{-(i-1)}|z|\frac{dz}{|z|^{d_i+\alpha_i}}
\le
\frac{C_m}{t}\frac{|\x-\x'|}{t^{\frac 1 \alpha}} \nero{p_{\bar S}(t,\x')}
\\  \nonumber\le \frac{C_m}{t}\left( \frac{|\x-\x'|}{t^{\frac 1 \alpha}}\right)^\beta p_{\bar S}(t,\x').
\end{align}
The second inequality follows from \eqref{ste} taking $\y=\x'-\bxi,\ \bzeta=\mu t^{-(i-1)}(e^A)_iz+\lambda(\x-\x') $, using the fact that on the considered set, i.e.  $|\x-\x'|\le t^{\frac 1\alpha},$ $ |z|\le t^{\frac 1{\alpha_i}} $, since $\alpha_i=\frac{\alpha}{1+\alpha(i-1)} $, \nero{we have that $t^{-(i-1)} |(e^A)_i z|+ |\x-\x'|\le 2 t^{\frac 1\alpha} $. }
We have exploited as well that:
$$ t^{-(i-1)} \int_{|z|\le t^{\frac 1{\alpha_i}}} \frac{|z|}{|z|^{d_i+\alpha_i}}dz \le C_{\alpha,i}t^{-(i-1)} t^{-1+\frac{1}{\alpha_i}}=C_{\alpha,i}t^{-1+\frac 1\alpha}.$$
If now $|\x-\x'|> t^{\frac{1}{\alpha}} $, we derive from 
\nero{\eqref{CTR_DER_M}} \nero{(using again \eqref{ste} as before):}
\begin{eqnarray}
|\LALS{i}{t} p_S(t,\x)-\LALS{i}{t} p_S(t,\x')|\nonumber\\
\le \frac{C_m}{t^{\frac 1\alpha}}\int_{|z|\le t^{\frac 1{\alpha_i}}}\int_{\R^N} \int_0^1  \Big( p_{\bar M}(t,\x+\mu t^{-(i-1)} (e^A)_iz -\bxi)+ p_{\bar M}(t,\x'+\mu t^{-(i-1)}(e^A)_iz-\bxi) \Big) d\mu P_{N_t}(d\bxi)\notag\\
\times \, t^{-(i-1)}  |z|\frac{dz}{|z|^{d_i+\alpha_i}}
\le \frac{C_m}{t^{\frac 1\alpha}}\int_{|z|\le t^{\frac 1{\alpha_i}}}\int_{\R^N} \int_0^1 \Big( p_{\bar M}(t,\x -\bxi)+ p_{\bar M}(t,\x'-\bxi) \Big) d\mu
 P_{N_t}(d\bxi)  t^{-(i-1)}|z|\frac{dz}{|z|^{d_i+\alpha_i}}\nonumber\\
\le \frac{C_m}{t} \Big( p_{\bar S}(t,\x)+p_{\bar S}(t,\x')\Big)\le \frac{C_m}{t} \left( \frac{|\x-\x'|}{t^{\frac{1}{\alpha}}}\right)^\beta\Big( p_{\bar S}(t,\x)+p_{\bar S}(t,\x')\Big).
\label{CTR_SMALL_OUT_DIAG}
\end{eqnarray}
Equations \eqref{CTR_SMALL_DIAG} and \eqref{CTR_SMALL_OUT_DIAG} give the stated control for $|\LALS {i}{t}p_S(t,\x)-\LALS {i}{t} p_S(t,\x')|  $. This gives \textit{(ii)} for $\alpha\in (0,1) $. 
The control \textit{(i)} can be obtained following the same lines, without handling differences of starting points. To handle the small jumps in  the remaining case $i=1, \alpha_1=\alpha\in [1,2) $, a second order Taylor expansion is needed in the previous computations. Write, in short:
\begin{eqnarray} \label{PRELIM_SMALL_JUMPS_2}
&&\LALS{1}{t} p_S(t,\x)-\LALS{1}{t} p_S(t,\x')\nonumber\\
&=&\int_{|z|\le t^{\frac 1{\alpha}}}\Big[\big(p_S(t,\x+(e^A)_1z)-p_S(t,\x)-\nabla p_S(t,\x')\cdot (e^A)_1z\big) \notag\\
&&-\big(p_S(t,\x'+(e^A)_1z)-p_S(t,\x')-\nabla p_S(t,\x')\cdot (e^A)_1z \big)\Big] \frac{dz}{|z|^{d+\alpha}}\nonumber\\
&=&\int_{|z|\le t^{\frac 1{\alpha}}} \int_{\R^N} \int_0^1 d\mu (1-\mu)\notag\\
&&\left\langle\Big(D^2 p_{ M}(t,\x+\mu (e^A)_1z -\bxi)-D^2 p_{M}(t,\x'+\mu (e^A)_1z-\bxi)\Big)  (e^A)_1z, (e^A)_1z\right\rangle
\times \,P_{N_t}(d\bxi) \frac{dz}{|z|^{d+\alpha}}.
\end{eqnarray}
Now, if $|\x-\x'|\le t^{\frac 1 \alpha} $, similarly to \eqref{CTR_SMALL_DIAG}:
\begin{align}
|\LALS{1}{t} p_S(t,\x)-\LALS{1}{t} p_S(t,\x')|\nonumber\\
\le
C_m \frac{|\x-\x'|}{t^{\frac 3\alpha}}  \int_{|z|\le t^{\frac 1{\alpha}}} \int_{\R^N} p_{\bar M}(t,\x'-\bxi)P_{N_t}(d\bxi)|z|^2\frac{dz}{|z|^{d+\alpha}}
\le \frac{C_m}{t}\left( \frac{|\x-\x'|}{t^{\frac 1 \alpha}}\right)^\beta p_{\bar S}(t,\x').\label{NUMB2_2}
\end{align}
If now $|\x-\x'|> t^{\frac{1}{\alpha}} $, we derive from \eqref{PRELIM_SMALL_JUMPS_2},
\nero{\eqref{CTR_DER_M}} \nero{(using again \eqref{ste} as before):}
\begin{eqnarray}
|\LALS{1}{t} p_S(t,\x)-\LALS{1}{t} p_S(t,\x')|\nonumber\\
\le \frac{C_m}{t^{\frac 2\alpha}}\int_{|z|\le t^{\frac 1{\alpha}}}\int_{\R^N} \int_0^1  (1-\mu) \Big( p_{\bar M}(t,\x+\mu  (e^A)_1z -\bxi)+ p_{\bar M}(t,\x'+\mu (e^A)_1z-\bxi) \Big) d\mu P_{N_t}(d\bxi)\notag\\
\times \,   |z|^2\frac{dz}{|z|^{d+\alpha}}
\le \frac{C_m}{t} \Big( p_{\bar S}(t,\x)+p_{\bar S}(t,\x')\Big)\le \frac{C_m}{t} \left( \frac{|\x-\x'|}{t^{\frac{1}{\alpha}}}\right)^\beta\Big( p_{\bar S}(t,\x)+p_{\bar S}(t,\x')\Big).
\label{CTR_SMALL_OUT_DIAG_2AL}
\end{eqnarray}
Equations \eqref{NUMB2_2} and \eqref{CTR_SMALL_OUT_DIAG_2AL} complete the proof of \textit{(ii)}, \textit{(i)} for $\alpha \in [1,2) $.

Let us now deal with \textit{(iii)}. We consider for simplicity $\alpha\in (0,1) $. The case $\alpha\in [1,2) $ could be handled as above considering an additional first order term in the integral (see \eqref{PRELIM_SMALL_JUMPS_2}).
Write, for $i\in \leftB 1,n\rightB $, $t>0, \x\in \R^N $:
\begin{eqnarray}
\partial_t \Delta^{\frac{\alpha_i}2,A,i,t} p_S(t,\x)
&=&\partial_t\Bigg( \frac{1}{t^{(i-1)\alpha_i}}\int_{\R^{d_i}}\Big(p_S(t,\x+(e^A)_i \tilde z)-p_S(t,\x) \Big) \frac{d\tilde z}{|\tilde z|^{d_i+\alpha_i}}\Bigg)\notag\\
&=&-\frac{(\alpha_i(i-1))}{t^{(i-1)\alpha_i+1}} \int_{\R^{d_i}}\Big(p_S(t,\x+(e^A)_i \tilde z)-p_S(t,\x) \Big) \frac{d\tilde z}{|\tilde z|^{d_i+\alpha_i}}\notag\\
&&+\frac{1}{t^{(i-1)\alpha_i}}\int_{\R^{d_i}}\Big(\partial_t p_S(t,\x+(e^A)_i \tilde z)-\partial_t p_S(t,\x) \Big) \frac{d\tilde z}{|\tilde z|^{d_i+\alpha_i}}=:\Big(E_{i,1}+E_{i,2}\Big) (t,\x).\label{NUOVA_SENSI_TEMPO}
\end{eqnarray}
{Since}, $E_{i,1}(t,\x)=\frac{\alpha_i(1-i)}{t}\int_{\R^{d_i}}\Big(p_S(t,\x+t^{-(i-1)}(e^A)_iz)-p_S(t,\x) \Big) \frac{dz}{|z|^{d_i+\alpha_i}}=\frac{\alpha_i(1-i)}{t}\Delta^{\frac{\alpha_i}2,A,i,t} p_S(t,\x),$
point \textit{(i)} readily gives:
\begin{equation}
\label{NUOVA_SENSI_TEMPO_EI1}
|E_{i,1}(t,\x)|\le \frac{C}{t^2}q(t,\x).
\end{equation}
To investigate $E_{i,2}(t,\x) $ \nero{note  that $\partial_t p_S (t, z) = L_S p_S (t, z),$ $t > 0,$ $\z \in  \R^N$ (see, for instance, \cite{kolo:00}),  where $L_S$ is the generator of $S,$ namely, for $\varphi \in  C_0^{\infty} (\R^N ),$}
\def\stima11{
 observe first from \eqref{THE_DENS_OU} and \eqref{CORRESP_L_S} that for all $t>0, \z\in \R^N $:
$$p_S(t,\z)=\frac{1}{(2\pi)^N}\int_{\R^N}\exp(-i\langle \z,\p\rangle)\exp\Big(-t\int_{\S^{N-1}}|\langle \p,\bxi\rangle|^\alpha \mu_S(d,\bxi) \Big) d\p.$$
Thus,
\begin{eqnarray}
\partial_t p_S(t,\z)&=&\frac{1}{(2\pi)^N}\int_{\R^N}\exp(-i\langle \z,\p\rangle)\big(-\int_{\S^{N-1}}|\langle \p,\bxi\rangle|^\alpha \mu_S(d,\bxi) \big)\exp\Big(-t\int_{\S^{N-1}}|\langle \p,\bxi\rangle|^\alpha \mu_S(d,\bxi) \Big) d\p\notag \\
&=& L_Sp_S(t,\z), \label{SENSI_TEMPO_PS}
\end{eqnarray}
where $L_S$ is the generator of $S$. Namely, for $\varphi \in C_0^\infty(\R^N,\R) $,
}
\begin{equation}
\label{DECOMP_GENE_S}
L_S\varphi(\x)=\int_{\R^N} \Big(\varphi(\x+\z)-\varphi(\x)\Big) \nu_S(d\z)=\int_{0}^{+\infty}\frac{dr}{r^{1+\alpha}}\int_{\S^{N-1}} \Big(\varphi(\x+r \bxi)-\varphi(\x) \Big)\tilde \mu_S(d\bxi), \; \x \in \R^N,
\end{equation}
where $\tilde \mu_S=\frac{\mu_S}{C_{\alpha,N}} $ for a positive constant $C_{\alpha,N} $.
 We then rewrite from the definition in \eqref{NUOVA_SENSI_TEMPO} \nero{(see also the comments on $p_S$ after \eqref{CORRISPONDENZA})}:  
\begin{eqnarray*}
E_{i,2}(t,\x)&=&\frac{1}{t^{(i-1)\alpha_i}}\int_{\R^{d_i}}\Big(L_S p_S(t,\x+(e^A)_iz)-L_Sp_S(t,\x)\Big)\frac{dz}{|z|^{d_i+\alpha_i}}\\
&=&L_S\Big(\frac{1}{t^{(i-1)\alpha_i}}\int_{\R^{d_i}}\Big( p_S(t,\x+(e^A)_iz)-p_S(t,\x)\Big)\frac{dz}{|z|^{d_i+\alpha_i}}\Big) =L_S \Delta^{\frac{\alpha_i}2,A,i,t}p_S(t,\x)\\
&=&\int_{\R^N} \Big(\Delta^{\frac{\alpha_i}2,A,i,t}p_S(t,\x+\z)-\Delta^{\frac{\alpha_i}2,A,i,t}p_S(t,\x)\Big)\nu_S(d\z),
\end{eqnarray*}
using \eqref{DECOMP_GENE_S} for the last equality. The idea is now as above to introduce a cutting threshold at the characteristic time-scale $t^{\frac 1\alpha}$ for the variable $\z $. Write:
\begin{eqnarray}
E_{i,2}(t,\x)&=&\int_{|\z|\le t^{\frac 1\alpha}} \Big(\Delta^{\frac{\alpha_i}2,A,i,t}p_S(t,\x+\z)-\Delta^{\frac{\alpha_i}2,A,i,t}p_S(t,\x)\Big)\nu_S(d\z)\notag\\
&&+\int_{|\z|> t^{\frac 1\alpha}} \Big(\Delta^{\frac{\alpha_i}2,A,i,t}p_S(t,\x+\z)-\Delta^{\frac{\alpha_i}2,A,i,t}p_S(t,\x)\Big)\nu_S(d\z) :=\Big(E_{i,21}+E_{i,22}\Big)(t,\x).
\label{DECOMP_2_EI2}
\end{eqnarray}
 \nero{Hence we get from point \textit{(i)}  (see also \eqref{DECOMP_GENE_S}): 
\begin{align*}
|E_{i,22}(t,\x)|\le \frac{C}{t} \int_{|\z|> t^{\frac 1\alpha}} \big( q(t,\x+\z)+  q(t,\x)   \big) \nu_S(d\z)
\\
= \frac{C}{t} \int_{t^{\frac 1\alpha}}^{+\infty}\frac{dr}{r^{1+\alpha}}\int_{\S^{N-1}} \big( q(t,\x+ r \bxi)+  q(t,\x) \big) \tilde \mu_S(d\bxi)
= \frac{C}{t^2} \int_{1}^{+\infty}\frac{ds}{s^{1+\alpha}}\int_{\S^{N-1}} \big( q(t,\x+ s t^{\frac 1\alpha} \bxi)+  q(t,\x) \big) \tilde \mu_S(d\bxi) 
\\=
 \frac{C}{t^2} \int_{1}^{+\infty}\frac{ds}{s^{1+\alpha}}\int_{\S^{N-1}}  q(t,\x+ s t^{\frac 1\alpha} \bxi) \tilde \mu_S(d\bxi) +  \frac{C}{t^2}    q(t,\x), \; t>0, \,\x \in \R^N.
\end{align*}
Define the density $\bar q (t, \cdot)$:
$$\bar q(t, \x) := C_1 \int_{1}^{+\infty}\frac{ds}{s^{1+\alpha}}\int_{\S^{N-1}}  q(t,\x+ s t^{\frac 1\alpha} \bxi) \tilde \mu_S(d\bxi),\;\; t>0,\, \x \in \R^N.
$$
We  derive:
\begin{equation}
\label{CTR_EI22}
|E_{i,22}(t,\x)|\le \frac{C}{t^2}\big(\bar q(t,\x)+q(t,\x)\big).
\end{equation} 
By the properties of $q$ we deduce $\bar q(t, \x) = t^{- \frac N\alpha} \bar q(1,t^{-\frac1 \alpha}\,  \x )$. Moreover, by using the Fubini theorem, we can check  \eqref{INT_Q}  when $q$ is replaced by $\bar q$. 
} 
\def\stima12{
For $E_{i,22}(t,\x) $, we get from point \textit{(i)} that:
\begin{eqnarray*}
|E_{i,22}(t,\x)|\le \frac{C}{t} \Big(\int_{|\z|> t^{\frac 1\alpha}} q(t,\x+\z)\nu_S(d\z)+ \frac{1}t q(t,\x)\Big).
\end{eqnarray*}
Introducing the symmetric probability measure $\mu_{\Theta_t}(d\z)=t  c_{\alpha,N} \I_{|\z|>
 t^{\frac 1\alpha}} \nu_S(d\z) $, i.e. $ c_{\alpha,N}$ is chosen so that $\int_{\R^N}\mu_{\Theta_t}(d\z)=1 $, the integral $\int_{|\z|> t^{\frac 1\alpha}} q(t,\x+\z)\nu_S(d\z)=\frac{C}t\int_{\R^N} q(t,\x+\z)\mu_{\Theta_t}(d\z)$. This last integral corresponds to the density of the sum of independent random variables $Q_t$ with density $q(t,\cdot)$ and $\Theta_t $ with law $\mu_{\Theta_t} $. Setting $\bar q(t,\x):=\int_{\R^N} q(t,\x+\z)\mu_{\Theta_t}(d\z) $, we easily obtain, similarly to \eqref{INT_1},  that $\bar q(t,\cdot) $ satisfies the integrability condition \eqref{INT_Q}. We thus derive:
\begin{equation}
\label{CTR_EI22}
|E_{i,22}(t,\x)|\le \frac{C}{t^2}\big(\bar q(t,\x)+q(t,\x)\big).
\end{equation} 
}
For $E_{i,21}(t,\x) $ we use point \textit{(ii)} in the \textit{diagonal regime}, for $\x'=\x+\z, $ so that $|\x'-\x|\le t^{\frac 1\alpha}$, taking $\beta=1 $. \nero{ We get, arguing as before (recall that $\alpha \in (0,1)$), writing $\nu_S(d\z)=\frac{dr}{r^{1+\alpha}}\tilde \mu_S(d\bxi) $:
\begin{eqnarray*}
|E_{i,21}(t,\x)| \le \frac{C}{t}\int_{|\z|\le t^{\frac 1\alpha}} \Big(q(t,\x+\z)+q(t,\x)\Big) \frac{|\z|}{t^{\frac 1\alpha}}\nu_S(d\z)
\\
= \frac{C}{t^2} \int_{0}^{1}\frac{ds}{s^{\alpha}}\int_{\S^{N-1}} \big( q(t,\x+ s t^{\frac 1\alpha} \bxi)+  q(t,\x) \big) 
\tilde \mu_S(d\bxi),
\end{eqnarray*}
changing also variable $s=rt^{-\frac 1\alpha} $ to get the last integral. Defining the density $\tilde q (t, \cdot)$:
$$\tilde q(t, \x) := C \int_{0}^{1}\frac{ds}{s^{\alpha}}\int_{\S^{N-1}}  q(t,\x+ s t^{\frac 1\alpha} \bxi) 
\tilde \mu_S(d\bxi),\;\; t>0,\, \x \in \R^N,
$$
we note  that $\tilde q(t, \x) = t^{- \frac N\alpha} \tilde q(1,t^{-\frac 1\alpha}\,  \x )$;  we have  \eqref{INT_Q}  when $q$ is replaced by $\tilde q$. Finally 
$$
|E_{i,2}(t,\x)| \le \frac{C}{t^2}\Big(q(t,\x)+ \bar q(t,\x) + \tilde q(t,\x) \Big).
$$
Plugging this last control and }
  \eqref{NUOVA_SENSI_TEMPO_EI1} 
into \eqref{NUOVA_SENSI_TEMPO} gives the statement up to a modification of $q$.
\end{proof}
\begin{proof}[Proof of estimate \eqref{HOLDER_S}]
By independence, see equation \eqref{DECOMP_G_P}, we can write:
$$
p_S(t,\x)-p_S(t,\x') = \int_{\R^N} \Big( p_M(t,\x-\bxi) -p_M(t,\x'-\bxi) \big) P_{N_t}(d\bxi).
$$
Now, since $p_M$ is smooth, we can use Taylor formula to expand:
\begin{equation*}
\int_{\R^N} \Big(p_M(t,\x-\bxi) -p_M(t,\x'-\bxi)  \Big) P_{N_t}(d\bxi)= \int_{\R^N}  \int_0^1  \nabla_\x p_M(t,\lambda \x + (1-\lambda)\x'-\bxi)\cdot (\x- \x') d\lambda P_{N_t}(d\bxi).
\end{equation*}
Equation \eqref{CTR_DER_M} now yields that, for a given $m\in \N $, there exists a constant $C_m$ s.t. for all $t>0,\ (\x,\x')\in \R^{2N}$:
$$
\left| \nabla_\x p_M(t,\lambda \x + (1-\lambda)\x'-\bxi)  \right| \le C_m \frac{1}{t^{\frac{1}{\alpha}}} {t^{-{\frac{N}{\alpha}}}}{ \left(1+ \frac{|\lambda \x + (1-\lambda)\x'-\bxi|}{t^{\frac{1}{\alpha}}} \right)^{-m}}=\frac{\bar C_m}{t^{\frac 1\alpha}}p_{\bar M}(t,\lambda \x+(1-\lambda)\x'-\bxi).
$$
We thus derive:
\begin{equation}
\label{EQ_OK}
\nero{ \left|\int_{\R^N} \Big(p_M(t,\x-\bxi) -p_M(t,\x'-\bxi)  \Big) P_{N_t}(d\bxi)\right| \le  \int_{\R^N}  C_m    \frac{ |\x- \x'|}{t^{\frac{1}{\alpha}}}
 \int_0^1 p_{\bar M}(t,\lambda \x+(1-\lambda)\x'-\bxi)
d\lambda P_{N_t}(d\bxi).}
\end{equation}
Rewrite:
$
\lambda \x + (1-\lambda)\x'-\bxi = \lambda (\x -\bxi) + (1-\lambda)(\x'- \bxi).
$
Observe as well from \eqref{CTR_DER_M} that $p_{\bar{M}}(t,\cdot)$ is convex.
Consequently, we can bound:
$$
p_{\bar M}(t,\lambda (\x -\bxi) + (1-\lambda)(\x'- \bxi))\le \lambda  p_{\bar M}(t,\x-\bxi)+(1-\lambda) p_{\bar M}(t,\x'-\bxi) .
$$
Plugging the above control in \eqref{EQ_OK}, we obtain:
$$
\left| p_S(t,\x)-p_S(t,\x')\right| \le \frac{C_m}{2}  \frac{ |\x- \x'|}{t^{\frac{1}{\alpha}}} \nero{ \Big( p_{\bar{S}}(t,\x) +p_{\bar{S}}(t,\x') \Big), }
$$
recalling that we defined $p_{\bar{S}}(t,\cdot)$ as the convolution between $p_{\bar{M}}(t,\cdot)$ and the law of $N_t$ (see \eqref{we}). The above control readily gives \eqref{HOLDER_S} if $|\x-\x'|\le t^{\frac1\alpha} $. Indeed, in that case $\frac{|\x-\x'|}{t^{\frac 1 \alpha}}\le  \Big(\frac{|\x-\x'|}{t^{\frac 1 \alpha}}\Big)^\beta$ for all $\beta \in (0,1] $. On the other hand, if $|\x-\x'|>t^{\frac 1\alpha} $, we can again derive from 
 \eqref{CTR_DER_M}  \nero{ with $\i = {\bf 0}$}
\begin{equation*}
|p_S(t,\x)-p_S(t,\x')|\le p_S(t,\x)+p_S(t,\x')\le C\big(p_{\bar S}(t,\x)+p_{\bar S}(t,\x')\big) \le C\Big(\frac{|\x-\x'|}{t^{\frac1\alpha}}\Big)^\beta\big(p_{\bar S}(t,\x)+p_{\bar S}(t,\x')\big),
\end{equation*}
for all $\beta \in (0,1] $. This completes the proof of \eqref{HOLDER_S}.
\end{proof}

%
%

We now state a crucial result to deal with the estimation of the singularities  in \eqref{SPLITTING_TIME_SPACE} (see also Remark \ref{NOTATION_Q}).
\begin{lemma}[Integration of the singularities]
\label{LABEL_LEMME_A_FAIRE}
For all $\delta>0, \kappa\ge 0$ sufficiently small, there exists $C:=C(\A{A},\delta,\kappa)>0$ such that for all $\gamma>0$ and given $(t,\sigma)\in[-T,T]^2$, with $|t-\sigma|\le \gamma^\alpha$,
defining for $s \ge t \vee \sigma$:
$$
\rho_{s}(\z) := |s-t|^{\frac 1 \alpha} + \sum_{i=1}^{n} |\z_i|^{\frac{1}{1+\alpha(i-1)}} |u(s)|^{\frac1 \alpha},
$$
where  $u(s) = u_{\lambda}(s)= \lambda (s-t) + (1-\lambda)(s-\sigma)$ \nero{for any fixed}
 $\lambda\in [0,1]$, we have for $K$ large enough:
\begin{equation}\label{INTEGRABILITE_Q}
\gamma^{\delta \alpha} \int_{s\ge t\vee \sigma , \,  \rho_{s}(\z) \ge K\gamma} \frac{1}{|u(s)|^{1+\delta}} |\z|^\kappa q(\z) d\z ds\le C.
\end{equation}
\end{lemma}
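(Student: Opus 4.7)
The plan is to exploit the factorization $\rho_s(\z) = |s-t|^{1/\alpha} + |u(s)|^{1/\alpha}\|\z\|_*$, where $\|\z\|_* := \sum_{i=1}^{n} |\z_i|^{1/(1+\alpha(i-1))}$, and split the domain $\{\rho_s \ge K\gamma\}$ into the two regions $\mathcal{D}_1 = \{|s-t|^{1/\alpha} \ge K\gamma/2\}$ (time-dominated) and $\mathcal{D}_2 = \{|u(s)|^{1/\alpha}\|\z\|_* \ge K\gamma/2\}$ (space-dominated), whose union covers $\{\rho_s \ge K\gamma\}$. The two essential ingredients will be the moment bound \eqref{INT_Q}, giving $\int q(\z) |\z|^{\delta} d\z \le C_{\delta}$ for every $\delta < \alpha$, and the identity $u(s) - (s-t) = (1-\lambda)(t-\sigma)$, which together with the hypothesis $|t-\sigma| \le \gamma^{\alpha}$ yields $|u(s) - (s-t)| \le \gamma^{\alpha}$.

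On $\mathcal{D}_1$, I choose $K$ large enough that $(K/2)^{\alpha} \ge 2$, so that $s-t \ge (K\gamma/2)^{\alpha}$ and $|u(s) - (s-t)| \le \gamma^{\alpha}$ together imply $u(s) \ge (s-t)/2$. Bounding the $\z$-integral by the constant $C_\kappa = \int q(\z)|\z|^\kappa d\z$ (finite since $\kappa < \alpha$) and changing variable $r = s-t$, the contribution of $\mathcal{D}_1$ is dominated by
\begin{equation*}
\gamma^{\alpha\delta}\, C_\kappa\, 2^{1+\delta}\!\int_{(K\gamma/2)^{\alpha}}^{\infty}\!\frac{dr}{r^{1+\delta}} = \frac{C_\kappa\, 2^{1+\delta+\alpha\delta}}{\delta\, K^{\alpha\delta}} \le C.
\end{equation*}

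On $\mathcal{D}_2$, the constraint forces $\|\z\|_* \ge R_s := K\gamma/(2|u(s)|^{1/\alpha})$. Since $\|\z\|_* \ge R \ge n$ implies $|\z| \ge R/n$ (as at least one $|\z_i|^{1/(1+\alpha(i-1))} \ge R/n \ge 1$ forces $|\z_i| \ge R/n$), Markov's inequality together with \eqref{INT_Q} yields for every $\delta' \in [0, \alpha - \kappa)$
\begin{equation*}
\int_{\|\z\|_* \ge R} q(\z)|\z|^\kappa\, d\z \le \Bigl(\frac{n}{R}\Bigr)^{\delta'}\!\!\int q(\z)|\z|^{\kappa+\delta'}d\z \le \frac{C_{\delta',\kappa}}{R^{\delta'}}, \quad R \ge n,
\end{equation*}
and trivially $\le C_\kappa$ otherwise. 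I split $\mathcal{D}_2$ accordingly. When $R_s \le n$, i.e.\ $u(s) \ge (K\gamma/(2n))^{\alpha}$, the inner integral is $\le C_\kappa$ and the time integral is handled as in $\mathcal{D}_1$. When $R_s > n$, i.e.\ $u(s) < (K\gamma/(2n))^{\alpha}$, I pick $\delta' \in (\alpha\delta,\, \alpha - \kappa)$ and obtain, via the change of variable $r = u(s)$,
\begin{equation*}
\gamma^{\alpha\delta}\!\int\!\frac{C_{\delta',\kappa}\, u(s)^{\delta'/\alpha}}{\gamma^{\delta'}\, u(s)^{1+\delta}}\, ds = C_{\delta',\kappa}\, \gamma^{\alpha\delta - \delta'}\!\!\int_{u(t\vee\sigma)}^{(K\gamma/(2n))^{\alpha}}\!\frac{dr}{r^{1+\delta-\delta'/\alpha}} \le \frac{C_{\delta',\kappa}}{\delta'/\alpha - \delta}\Bigl(\frac{K}{2n}\Bigr)^{\delta'-\alpha\delta} \le C,
\end{equation*}
where the integrability near $r = 0$ is ensured by $\delta'/\alpha > \delta$.

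The main obstacle is precisely the non-emptiness of the interval $(\alpha\delta,\, \alpha - \kappa)$ for $\delta'$, which requires $\alpha\delta + \kappa < \alpha$: this is the quantitative meaning of ``$\delta, \kappa$ sufficiently small.'' All remaining steps are bookkeeping, and the resulting constant depends only on $\A{A}, \delta, \kappa$, uniformly in $\gamma$, $t$, $\sigma$, $\lambda$ as required.
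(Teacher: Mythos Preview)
Your proof is correct and follows essentially the same approach as the paper's: split the integration domain into a ``large time'' region (your $\mathcal{D}_1$, the paper's $I_1$ where $|u(s)|>c_0\gamma^\alpha$) handled by direct integration of $1/|u(s)|^{1+\delta}$, and a ``small time'' region (your $\mathcal{D}_2$ with $R_s>n$, the paper's $I_2$) handled via a Markov--moment argument using \eqref{INT_Q}; the resulting constraint $\alpha\delta<\delta'<\alpha-\kappa$ is exactly the paper's $\alpha\delta<\eta<\alpha-\kappa$. The only cosmetic difference is that you pass directly from $\|\z\|_*\ge R$ to $|\z|\ge R/n$ and apply Markov to $|\z|^{\kappa+\delta'}$, whereas the paper sums the componentwise Markov bounds over $j\in\leftB 1,n\rightB$; both give the same outcome.
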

\begin{proof}
Let us define:
\begin{eqnarray}
I=\gamma^{\delta\alpha} \int_{ s\ge t\vee \sigma  , \, \rho_s(\z) \ge K\gamma} \frac{1}{|u(s)|^{1+\delta}} |\z|^\kappa q(\z) d\z ds.
\end{eqnarray}
We introduce the following partition \nero{for a fixed $c_0>0$}:
\begin{equation*}
I= \gamma^{\delta\alpha} \int_{s\ge t\vee \sigma  , \,  \rho_s(\z) \ge K\gamma, \, \{|u(s)|>c_0\gamma^{\alpha}\}\cup \{|u(s)|\le c_0\gamma^{\alpha}\}} \frac{1}{|u(s)|^{1+\delta}} |\z|^\kappa q(\z) d\z ds.
=:I_{1}+I_2.
\end{equation*}
For $I_1$ we readily derive, integrating the function $|\z|^\kappa q(\z)$ in space, that:
\begin{equation}
\label{CTR_IS11}
I_1\le \gamma^{\delta\alpha} \int_{ \rho_s(\z) \ge K\gamma   , \,  |u(s)|>c_0\gamma^{\alpha} } \frac{1}{|u(s)|^{1+\delta}} |\z|^\kappa q(\z) d\z ds \le C\gamma^{\delta\alpha}
\left\{-\frac{1} {\nero{r}^{ \delta}}
\Big|_{c_0\gamma^\alpha}^{+\infty}\right\}\le C.
\end{equation}

We now turn to $I_2$.
Observe that with the definition $\rho_s(\z)$,
when $|u(s)|\le c_0\gamma^{\alpha}$ and $\rho_s(\z) \ge K \gamma$, since $|t-\sigma| \le \gamma^{\alpha}$, we have $|s-t| \le |u(s)|+|t-\sigma|\le (c_0+1)\gamma^\alpha $ \nero{(consider the cases $t> \sigma$ and $t \le \sigma$)} and

$$
\sum_{i=1}^{n} |\z_i|^{\frac{1}{1+\alpha(i-1)}} |u(s)|^{\frac{1}{\alpha}} \ge \big(K-(1+c_0)^{\frac 1 \alpha} \big) \gamma = :\tilde K \gamma ,
$$
where $\tilde K >0$ when $K$ is large enough. Hence,
$$\Big\{\rho_s(\z) >K \gamma,|u(s)|\le c_0\gamma^\alpha \Big\}\subset \Big\{ \sum_{i=1}^{n} |\z_i|^{\frac{1}{1+\alpha(i-1)}} |u(s)|^{\frac{1}{\alpha}}\ge \tilde K\gamma, |u(s)|\le c_0\gamma^\alpha \Big\}.$$
We then get:
\begin{equation}
\label{READY_FOR_CTR}
I_2\le  \gamma^{\delta\alpha} \int_{\sum_{i=1}^{n} |\z_i|^{\frac{1}{1+\alpha(i-1)}} |u(s)|^{\frac{1}{\alpha}}\ge \tilde K\gamma   , \,   |u(s)|\le c_0\gamma^{\alpha} }
\frac{1}{|u(s)|^{1+\delta} }
|\z|^\kappa q(\z)d\z ds.
\end{equation}
Thus, for all $s, \z $, there exists $i_0$ such that:
$$
|\z_{i_0}| \ge \left(\frac{\tilde{K}}{n} \right) ^{\alpha(i_0 -1)+1} \frac{\gamma^{1+ \alpha(i_0-1)}}{|u(s)|^{i_0-1 + \frac 1 \alpha}}.
$$
Consequently for $ \eta_j \in (0 ,\alpha)$, we derive: 
\begin{eqnarray*}
I_2 
&\le & C\gamma^{\delta\alpha} \int_{\sum_{i=1}^{n} |\z_i|^{\frac{1}{1+\alpha(i-1)}} |u(s)|^{\frac{1}{\alpha}}\ge \tilde K\gamma   , \,   |u(s)|\le c_0\gamma^{\alpha}}
\frac{1}{|u(s)|^{1+\delta} } \sum_{j=1}^{n}\left( \frac{|\z_{j}|  |u(s)|^{j-1+ \frac1 \alpha}}{\gamma^{1+\alpha (j-1)} } \right)^{\eta_j} |\z|^\kappa q(\z)d\z ds\\
&\le& C \sum_{j=1}^{n}\gamma^{\delta\alpha- \eta_j(1+\alpha (j-1))} \int_{  |u(s)|\le c_0\gamma^{\alpha}}
\frac{1}{|u(s)|^{1+\delta-\eta_j(j-1+ \frac1 \alpha)  }} ds  \int_{\R^N} |\z|^{\eta_j+\kappa} q(\z)d\z.
\end{eqnarray*}
Choosing for all $j\in \leftB 1,n\rightB$, $\eta_j+\kappa<\alpha $, we can integrate in space, i.e. $\int |\z|^{\eta_j+\kappa} q(\z)d\z \le C$.
Thus,
$$
I_2 \le  C\sum_{j=1}^{n}\gamma^{\delta\alpha- \eta_j(1+\alpha (j-1))} \int_{  |u(s)|\le c_0\gamma^{\alpha}}
\frac{1}{|u(s)|^{1+\delta-\eta_j(j-1+ \frac1 \alpha)  }} ds \le C,
$$
where for the last inequality, we choose for all $j\in \leftB1,n\rightB$,  $\eta_j=\eta$ such that:
$$
\eta > \delta \alpha \ge \frac{\alpha \delta}{\alpha(j-1)+1}    \Rightarrow   \delta- \eta_j(j-1+ \frac1 \alpha) <0  .
$$
We point out that the constraints on $\kappa$, $\delta$ and $\eta$  summarize in
$\kappa +\eta <\alpha$ and $\alpha \delta < \eta $, and can be fulfilled for $\kappa$ and $\delta$ small enough.
\end{proof}

\begin{remark}\label{DELTA_0}
Importantly, it can be derived from the previous proof that the term $I_2\le C$ even if $\delta=0 $.
Indeed, in this case, we handle:
\begin{equation}\label{EST_DELTA_0}
I_2 \le  C\sum_{j=1}^{n}\gamma^{- \eta_j(1+\alpha (j-1))} \int_{  |u(s)|\le c_0\gamma^{\alpha}}
\frac{1}{|u(s)|^{1-\eta_j(j-1+ \frac1 \alpha)  }} ds \le C.
\end{equation}
\end{remark}

\subsubsection{Proof of the deviation Lemma \ref{LEMME_DEV}: Boundedness of the terms in \eqref{SPLITTING_TIME_SPACE}} 
We are now in position to complete the proof of Lemma \ref{LEMME_DEV}. It suffices to establish that
 the terms \nero{ $I_{i,T} $ and $I_{i,S}$} in \eqref{SPLITTING_TIME_SPACE}, which respectively correspond to the \textit{time} and \textit{space} sensitivities, are bounded.

 \textcolor{black}{
 The purpose of the computations is then to derive that the initial integration domain $\{\rho>K\gamma\} $ can be expressed as or is included in a domain of the form $\{\rho_s(\z)>K\gamma\} $, for a possibly different $K$, using the notations introduced in Lemma \ref{LABEL_LEMME_A_FAIRE}. 
 This latter lemma is here the crucial tool to handle the singularities.}
\begin{trivlist}
\item[$\bullet $] \textit{Control of $(I_{i,S})_{i\in \leftB 1,n\rightB}$ in \eqref{SPLITTING_TIME_SPACE}}. From Lemma \ref{SENS_SING_STAB} and the notations introduced in \eqref{DEF_RHO_GAMMA}:
\begin{eqnarray} \label{sii} \nonumber
I_{i,S}&\le& \nero{ I_S := } C \int_{s\ge \sigma\vee t   , \,  \rho >K\gamma} \frac{1}{(s-\sigma)}\left(\frac{|\M_{s-\sigma}^{-1}(e^{(s-t)A}\x-e^{(s-\sigma)}\bxi)|}{(s-\sigma)^{\frac 1 \alpha}} \right)^\beta \frac{1}{\det(\M_{s-\sigma})}\Big(q\big(s-\sigma, \M_{s-\sigma}^{-1}(e^{(s-t)A}\x-\y) \big)\\   
&&\ +q\big(s-\sigma, \M_{s-\sigma}^{-1}(e^{(s-\sigma)A}\bxi-\y) \big) \Big)  d\y ds.
\end{eqnarray}
We can rewrite, using also \eqref{str},  
\begin{eqnarray*}
|\M_{s-\sigma}^{-1}(e^{(s-t)A}\x-e^{(s-\sigma)A}\bxi)|&=&\Big|\big(\M_{s-\sigma}^{-1}e^{(s-\sigma)A}\M_{s-\sigma}\big)
\M_{s-\sigma}^{-1}(e^{(\sigma-t)A}\x-\bxi)\Big|\\
&=&  
 \Big|e^{A} \,
\M_{s-\sigma}^{-1}(e^{(\sigma-t)A}\x-\bxi)\Big| \le 
C|\M_{s-\sigma}^{-1}(e^{(\sigma-t)A}\x-\bxi)|, 
\end{eqnarray*}
so that recalling from \eqref{DEF_RHO_GAMMA}, $\rho:=\rho(s-t, e^{(s-t)A}\x-\y) $ and $\gamma:=\rho(\sigma-t,e^{(\sigma-t)A}\x-\bxi)$:
\begin{eqnarray}
I_{S}
&\le &C\int_{s\ge \sigma\vee t   , \,  \rho >K\gamma} \sum_{i=1}^n
\left(\frac{|(e^{(\sigma-t)A}\x-\bxi)_i|}{(s-\sigma)^{i-1+\frac{1}{\alpha}}}\right)^{\beta}\frac{1}{(s-\sigma)}
\nonumber\\
&&\times\frac{1}{\det(\M_{s-\sigma})}\Big(q\big(s-\sigma, \M_{s-\sigma}^{-1}(e^{(s-t)A}\x-\y) \big)
+q\big(s-\sigma, \M_{s-\sigma}^{-1}(e^{(s-\sigma)A}\bxi-\y) \big) \Big) d\y ds\nonumber\\
&\le & C\sum_{i=1}^n \gamma^{\big( \alpha(i-1)+1\big) \beta} \int_{s\ge \sigma\vee t}\frac{1}{(s-\sigma)^{\beta(i-1+\frac{1}{\alpha})+1} } \int_{\R^N}   \I_{\rho >K\gamma}
\nonumber\\
&&\times\frac{1}{\det(\M_{s-\sigma})}\Big(q\big(s-\sigma, \M_{s-\sigma}^{-1}(e^{(s-t)A}\x-\y) \big)
\ +q\big(s-\sigma, \M_{s-\sigma}^{-1}(e^{(s-\sigma)A}\bxi-\y) \big) \Big)  d\y ds =:I_S^1+I_S^2. \label{DECOUP_IS}
\end{eqnarray}
Let us first deal with $I_S^1$ and set  for a fixed $s $,
\textcolor{black}{
\begin{equation}
\label{CAMBIO_VAR_IMP}
\z_1 
=(s-\sigma)^{-\frac1\alpha}\M_{s-\sigma}^{-1}(e^{(s-t)A}\x-\y).
\end{equation}
Observe that, in the variable $\z_1$, 
we have for all $i\in \leftB 1,n \rightB $:
$$(e^{(s-t)A}\x-\y)_i=\z_{1,i} (s-\sigma)^{\frac 1\alpha+(i-1)}=\z_{1,i} (s-\sigma)^{\frac {1+\alpha(i-1)}\alpha}\Rightarrow |(e^{(s-t)A}\x-\y)_i|^{\frac{1}{\alpha(i-1)+1}}=|\z_{1,i}|^{\frac{1}{\alpha(i-1)+1}}(s-\sigma)^{\frac 1\alpha}.$$
In other words, the component $(s-\sigma)^{\frac 1\alpha} $ factorizes by homogeneity for all components (see also Remark \ref{REM_HOMO}).
From the definition of $\rho $ in \eqref{DEF_RHO_GAMMA} we thus obtain:
\begin{eqnarray*}
\rho=(s-t)^{\frac1\alpha}+\sum_{i=1}^n |(e^{(s-t)A}\x-\y)_i|^{\frac{1}{1+\alpha(i-1)}}=(s-t)^{\frac1\alpha}+\sum_{i=1}^n |\z_{1,i}|^{\frac{1}{1+\alpha(i-1)}}(s-\sigma)^{\frac 1\alpha}=:\rho_s(\z_1),
\end{eqnarray*}
introduced in Lemma \ref{LABEL_LEMME_A_FAIRE} taking $u(s)=(s-\sigma)$.}
Recalling the  scaling property $q(t,\x)=\frac{1}{t^{\frac N\alpha}}q\big(1,\frac{\x}{t^{\frac1\alpha}}\big) $ and using the previous change of variable in the spatial integral,  we get
\begin{eqnarray*}
I_S^1\le C\sum_{i=1}^n \gamma^{\big( \alpha(i-1)+1\big) \beta} \int_{s\ge \sigma\vee t   , \,  \rho_s(\z_1) >K\gamma}
\frac{1}{(s-\sigma)^{\beta(i-1+\frac{1}{\alpha})+1} } q(\z_1)d\z_1 ds.
\end{eqnarray*}
Each term in the above summation can thus be controlled thanks to Lemma \ref{LABEL_LEMME_A_FAIRE} 
 taking for the $i^{{\rm th}} $ term, $ \delta= \beta(i-1+\frac{1}{\alpha}), \kappa=0$. Let us now control $I_S^2$ in \eqref{DECOUP_IS}.
\begin{eqnarray*}
I_S^2&=& C\sum_{i=1}^n \gamma^{\big( \alpha(i-1)+1\big) \beta} \int_{s\ge \sigma\vee t ,  \{|s-\sigma|> \gamma^\alpha  \}\cup\{|s-\sigma|\le \gamma^\alpha\}}
\frac{1}{(s-\sigma)^{\beta(i-1+\frac{1}{\alpha})+1} }\\
&&\times \Big(\int_{\R^N} \I_{   
\rho >K\gamma} \frac{1}{\det(\M_{s-\sigma})}
q\big(s-\sigma, \M_{s-\sigma}^{-1}(e^{(s-\sigma)A}\bxi-\y) \big)  d\y \Big)ds=:I_S^{21}+I_S^{22}.
\end{eqnarray*}
For $I_S^{21}$ we readily get the bound writing $\I_{   
\rho >K\gamma}\le 1 $ and integrating in $\y$ over $\R^N$ and in $s$ over $\{|s-\sigma|> \gamma^\alpha  \}$.
To analyze $I_S^{22} $, we first set $\z_2=(s-\sigma)^{-\frac 1 \alpha}\M_{s-\sigma}^{-1}(e^{(s-\sigma)A}\bxi-\y)$.
Recall as well from \eqref{DEF_RHO_GAMMA} that
\begin{eqnarray}
\label{MAJO_EVE}
\rho &\le & |t-s|^{\frac 1 \alpha}+ \sum_{i=1}^n\Big( \Big|(\y-e^{(s-\sigma)A}\bxi)_i\Big|^{\frac{1}{1+\alpha(i-1)}}+\Big|\big(e^{(s-\sigma)A}(\bxi-e^{(\sigma-t)A}\x)\big)_i\Big|^{\frac{1}{1+\alpha(i-1)}} \Big)\nonumber\\
&\le& \rho_s(\z_2)+\sum_{i=1}^n\Big|\big(e^{(s-\sigma)A}(\bxi-e^{(\sigma-t)A}\x)\big)_i\Big|^{\frac{1}{1+\alpha(i-1)}},
\end{eqnarray}
using again the notations of Lemma \ref{LABEL_LEMME_A_FAIRE} with $u(s)=(s-\sigma) $ for the last inequality.
 \nero{Observe now that arguing as in  \eqref{chi} we have, for any $\q \in \R^N$, $r>0$, $i\in \leftB 1,n\rightB $,
\begin{align} \label{FORM_RES}  \nonumber
 |(e^{r A}\q)_i| =  |B_i^* e^{rA} [(\q^*)^*] |
 =  | (\q^* e^{rA^*} B_i)^*| = 
 | \q^* e^{rA^*} B_i| 
 =  | \q^* \M_r^{-1} e^{A^*} \M_r  B_i | 
\\ = r^{(i-1)} | \q^* \M_r^{-1} e^{A^*}  B_i 
| 
=  r^{(i-1)} | \,[\q_1,  r^{-1} \q_2, \ldots , r^{-(n-1)}\q_n]^* \, e^{A^*} B_i |  
 = c \sum_{k =1}^i r^{i-k} |\q_k|.  
\end{align}
By  \eqref{FORM_RES}, } we get  for all $i\in \leftB 1,n\rightB $:
\begin{eqnarray*}
\Big|\big(e^{(s-\sigma)A}(\bxi-e^{(\sigma-t)A}\x)\big)_i\Big|&\le& C\sum_{j=1}^i |s-\sigma|^{i-j}|(\bxi-e^{(\sigma-t)A}\x)_j| 
\le
C\sum_{j=1}^i \gamma^{\alpha(i-j)} \gamma^{\alpha(j-1)+1} \le C \gamma^{\alpha(i-1)+1},
\end{eqnarray*}
recalling that $|s-\sigma|\le \gamma^\alpha $ for $I_S^{22} $ and $\gamma=\rho(|t-\sigma|,\bxi-e^{(\sigma-t)A}\x ) $, with $\rho(\cdot,\cdot)$ defined in \eqref{rho_homo}, for the last inequality. We thus get from \eqref{MAJO_EVE}:
\begin{equation*}
\rho\le \rho_s(\z_2)+C\gamma.
\end{equation*}
Hence,
$\{\rho >K\gamma, |s-\sigma|\le \gamma^\alpha\} \subset \{ \rho_s(\z_2)>(K-C)\gamma\},$
so that
\begin{eqnarray*}
I_S^{22}\le C\sum_{i=1}^n \gamma^{\big( \alpha(i-1)+1\big) \beta} \int_{s\ge \sigma\vee t, \, \rho_s(\z_2)>(K-C)\gamma }
\frac{1}{(s-\sigma)^{\beta(i-1+\frac{1}{\alpha})+1} } q(\z_2)d\z_2 ds
\end{eqnarray*}
which is again controlled by Lemma \ref{LABEL_LEMME_A_FAIRE} taking the same arguments as for $I_S^1$ for $K$ \textit{large enough}. The statement for $I_S$ follows from \eqref{DECOUP_IS} and the previous controls.
\\
\item[$\bullet $ ] \textit{Control of $(I_{i,T})_{i\in \leftB 1,n\rightB}$ in \eqref{SPLITTING_TIME_SPACE}}. For the analysis,
we need to exploit the relative position of $t,\sigma \le s $. We can assume w.l.o.g. that $t<\sigma\le s $. Note that, if $t=\sigma $, then for all $i\in \leftB 1,n\rightB $, $I_{i,T}=0$ (no time sensitivity).
Set for a fixed $s$,
\begin{equation}
\label{Z_CTR_T}
\z_3=\M_{s-t}^{-1} (e^{(s-t)A}\x-\y).
\end{equation} 
Recall as well  from \eqref{DEF_RHO_GAMMA} that:
\begin{equation}
\rho=(s-t)^{\frac 1\alpha}+\sum_{i=1}^{n}|\big(e^{(s-t)A}\x -\y\big)_i|^{\frac{1}{\alpha(i-1)+1}}
=(s-t)^{\frac 1\alpha}+\sum_{i=1}^{n}|(s-t)^{i-1}\big(\z_3\big)_i|^{\frac{1}{\alpha(i-1)+1}}=\rho_s\Big(\frac{\z_3}{(s-t)^{\frac1\alpha}}\Big),\label{CTR_CAMBIO_RHO}
\end{equation}
with the notation of Lemma \ref{LABEL_LEMME_A_FAIRE} with $u(s)=s-t $.
We now split for $i\in \leftB 1,n\rightB $ the terms in the following way:
\begin{eqnarray*}
I_{i,T}\\
=
\int_{s\ge \sigma\vee t   , \,  \rho >K\gamma} \Big| \frac{\Delta^{\frac {\alpha_i} 2,A,i,s-t}p_S\big(s-t,\M_{s-t}^{-1}(e^{(s-t)A}\x-\y)\big)}{\det(\M_{s-t})}
- \frac{\Delta^{\frac {\alpha_i} 2,A,i,s-\sigma}p_S\big(s-\sigma,\M_{s-\sigma}^{-1}(e^{(s-t)A}\x-\y)\big)}{\det(\M_{s-\sigma})}\Big|d \y ds\\
=
\int_{s\ge \sigma\vee t  , \,  \rho_s\Big(\frac{\z_3}{(s-t)^{\frac 1\alpha}}\Big) >K\gamma} \Big|\Delta^{\frac{\alpha_i} 2,A,i,s-t} p_S(s-t,\z_3)- \frac{\det(\M_{s-t})}{\det(\M_{s-\sigma})}\Delta^{\frac {\alpha_i} 2,A,i,s-\sigma}p_S(s-\sigma,\M_{s-\sigma}^{-1}\M_{s-t} \z_3)\Big|d \z_3 ds\\
\le
\int_{s\ge \sigma\vee t  , \,  \rho_s\Big(\frac{\z_3}{(s-t)^{\frac 1\alpha}}\Big) >K\gamma} \Big|\Delta^{\frac {\alpha_i} 2,A,i,s-t} p_S(s-t,  \z_3)-\Delta^{\frac {\alpha_i} 2,A,i,s-\sigma} p_S(s-\sigma,  \z_3)\Big|d \z_3 ds\\
+\int_{s\ge \sigma\vee t   , \,  \rho_s\Big(\frac{\z_3}{(s-t)^{\frac 1\alpha}}\Big) >K\gamma} \Big|\Delta^{\frac {\alpha_i} 2,A,i,s-\sigma} p_S(s-\sigma,  \z_3)-\Delta^{\frac{\alpha_i} 2,A,i,s-\sigma}p_S(s-\sigma,\M_{\frac{s-t}{s-\sigma}} \z_3)\Big|d \z_3 ds\\
+\int_{s\ge \sigma\vee t  , \,  \rho_s\Big(\frac{\z_3}{(s-t)^{\frac 1\alpha}}\Big) >K\gamma} \Big|\Big(1-\det\M_{\frac{s-t}{s-\sigma}}\Big)\Delta^{\frac{\alpha_i} 2,A,i,s-\sigma}p_S(s-\sigma,\M_{\frac{s-t}{s-\sigma}} \z_3)\Big|d \z_3 ds=:I_{i,T}^1+I_{i,T}^2+I_{i,T}^3.
\end{eqnarray*}
\end{trivlist}
\begin{trivlist}
\item[-] \textit{Control of $I_{i,T}^3 $}.
\end{trivlist}
Let us first handle the contribution $I_{i,T}^3 $ which is the most \textit{unusual} and which  specifically  appears in the \textit{degenerate} framework. From equation \eqref{CTR_BOUND_SING_KER} in Lemma  \ref{SENS_SING_STAB},
we have for all $i\in \leftB 1,n\rightB $:
\begin{eqnarray*}
I_{i,T}^3\le C\int_{s\ge \sigma\vee t   , \,  \rho_s\Big(\frac{\z_3}{(s-t)^{\frac 1\alpha}}\Big)>K\gamma} \frac{1}{(s-\sigma)}q(s-\sigma, \M_{\frac{s-t}{s-\sigma}} \z_3)\Big|1-\big(\frac{s-t}{s-\sigma}\big)^r \Big| d\z_3 ds=:I_T^3,
\end{eqnarray*}
where $r:=\sum_{j=1}^n (j-1) d_j = {N - d_1}$. Observe now that:
$$\Big|1-\big(\frac{s-t}{s-\sigma}\big)^r\Big|=\frac{r(\sigma-t)\int_0^1(s-\sigma+\lambda(\sigma-t))^{r-1} d\lambda}{(s-\sigma)^ r}\le r\frac{(\sigma-t)(s-t)^{r-1}}{(s-\sigma)^r},$$
recalling that for all $\lambda \in [0,1],\ s-\sigma+\lambda(\sigma-t)\le s-t$ for the last inequality. Set now from \eqref{Z_CTR_T},
$$\hat \z_3:=(s-\sigma)^{-\frac 1\alpha}\M_{\frac{s-t}{s-\sigma}} \z_3 =(s-\sigma)^{-\frac 1 \alpha}\M_{s-\sigma}^{-1}\big(e^{(s-t)A}\x -\y\big). $$ Recalling \eqref{CTR_CAMBIO_RHO}, we obtain with the notation of Lemma \ref{LABEL_LEMME_A_FAIRE} and $u(s)=s-\sigma$:
$$\rho=(s-t)^{\frac 1\alpha}+\sum_{i=1}^{n}|\big(e^{(s-t)A}\x -\y\big)_i|^{\frac{1}{\alpha(i-1)+1}} =(s-t)^{\frac 1\alpha}+\sum_{i=1}^{n}|\hat \z_{3,i}|^{\frac{1}{\alpha(i-1)+1}}(s-\sigma)^{\frac 1 \alpha}=:\rho_s(\hat \z_3).$$
Hence, since $d\z_3=(s-\sigma)^{\frac N\alpha}\left(\frac{s-\sigma}{s-t} \right)^r d\hat\z_3 $,
\begin{eqnarray*}
I_T^3&\le& C\int_{s\ge \sigma\vee t   , \,  \rho_s(\hat \z_3)>K\gamma} \frac{1}{(s-\sigma)} q(\hat \z_3)\big(\frac{s-\sigma}{s-t} \big)^r \frac{(\sigma-t)(s-t)^{r-1}}{(s-\sigma)^r}  d\hat \z_3 ds\\
&\le& C\int_{s\ge \sigma\vee t  , \,  \rho_s(\hat \z_3)>K\gamma} \frac{1}{(s-\sigma)} q(\hat \z_3)\frac {\sigma-t}{s-t}   d\hat \z_3 ds.
\end{eqnarray*}
Our previous choice $t<\sigma\le s $ then yields $ (\sigma-t)\le (s-t),\ s-\sigma\le (s-t)$ and therefore for some $\delta\in (0, \alpha) $:
\begin{eqnarray*}
I_T^3\le C(\sigma-t)^{\delta}\int_{s\ge \sigma\vee t   , \, \rho_s(\hat \z_3)>K\gamma}\frac{1}{(s-\sigma)(s-t)^\delta} q(\hat \z_3) d\hat \z_3 ds \le C(\sigma-t)^{\delta} \! \!\int_{s\ge \sigma\vee t   , \,  \rho_s(\hat \z_3)>K\gamma}\frac{1}{(s-\sigma)^{1+\delta}} q(\hat \z_3) d\hat \z ds.
\end{eqnarray*}
Hence, Lemma \ref{LABEL_LEMME_A_FAIRE} applied with the current $\delta, \kappa=0$ yields $I_T^3\le C $.
\begin{trivlist}
\item[-] \textit{Control of $I_{i,T}^2$}.
\end{trivlist}
Using Lemma \ref{SENS_SING_STAB}, we bound:
\begin{eqnarray} \label{DESCRIZIONE_I2} 
I_{i,T}^2 = \int_{s\ge \sigma\vee t   , \,  \rho_s\Big(\frac{\z_3}{(s-t)^{\frac 1\alpha}}\Big) >K\gamma} \Big|\Delta^{\frac{\alpha_i} 2,A,i,s-\sigma} p_S(s-\sigma,  \z_3)-\Delta^{\frac {\alpha_i} 2,A,i,s-\sigma}p_S(s-\sigma,\M_{\frac{s-t}{s-\sigma}} \z_3)\Big|d \z_3 ds
\\
\nonumber
\le C \int_{s\ge \sigma\vee t  , \, \rho_s\Big(\frac{\z_3}{(s-t)^{\frac 1\alpha}}\Big) >K\gamma} \frac{1}{(s-\sigma )^{1+ \frac{\beta}{\alpha}}} \|I_{N\times N} - \M_{\frac{s-t}{s-\sigma}} \|^\beta | \z_3|^\beta \Big(q(s-\sigma, \z_3) +  q(s-\sigma , \M_{\frac{s-t}{s-\sigma}} \z_3)\Big)
 d \z_3 ds=:I_T^2,
\end{eqnarray}
  where $\|I_{N\times N} - \M_{\frac{s-t}{s-\sigma}} \|$ indicates the operator norm of $I_{N\times N} - \M_{\frac{s-t}{s-\sigma}}$.
For the first term in the second line of \eqref{DESCRIZIONE_I2}, we set $\bar{\z}_3 =(s-\sigma)^{-\frac 1 \alpha}  \z_3$ for $\z_3 $ as in \eqref{Z_CTR_T}. In this variable we have from \eqref{CTR_CAMBIO_RHO} that
\begin{equation}
\label{DEF_RHO_BAR}
\rho_s\Big(\frac{ \z_3}{(s-t)^{\frac 1\alpha}}\Big)=(s-t)^{\frac 1\alpha}+\sum_{i=1}^n |\bar \z_{3,i} (s-\sigma)^{\frac 1\alpha}(s-t)^{i-1}|^{\frac{1}{1+\alpha(i-1)}}=:\bar \rho_s (\bar \z_3).
\end{equation}
Changing variable in the second term of the second line of \eqref{DESCRIZIONE_I2} to $\hat{\z}_3 =(s-\sigma)^{-\frac 1 \alpha} \M_{\frac{s-t}{s-\sigma}} \z_3$, as in $I_{T,3} $, we recall from \eqref{CTR_CAMBIO_RHO} that (note that $(s- \sigma)^{\frac{\alpha (i-1) + 1 }{\alpha (\alpha (i-1) + 1 ) } } = (s- \sigma)^{\frac{1}{\alpha}} $)
\begin{equation*}
\rho_s\Big(\frac{ \z_3}{(s-t)^{\frac 1\alpha}}\Big)=(s-t)^{\frac 1\alpha}+\sum_{i=1}^n |\hat \z_{3,i} |^{\frac{1}{1+\alpha(i-1)}}(s-\sigma)^{\frac 1\alpha} =\rho_s (\hat \z_3),
\end{equation*}
with the notations of Lemma \ref{LABEL_LEMME_A_FAIRE} with $u(s)=(s-\sigma) $.
Recalling now that $s-t \ge s-\sigma$, so that $\det (\M_{\frac{s-\sigma}{s-t}}) \le 1$ and $\big \| \M_{\frac{s-\sigma}{s-t}} \big\| \le c$, we derive:
\begin{eqnarray*}
I_T^2
&\le&C \Big(\int_{s\ge \sigma\vee t  , \, \bar \rho_s(\bar \z_3) >K\gamma} \frac{1}{(s-\sigma )} \|I_{N\times N} - \M_{\frac{s-t}{s-\sigma}} \|^\beta |\bar \z_3|^\beta q( \bar \z_3) d \bar\z_3 ds\\
&&+ \int_{s\ge \sigma\vee t   , \,  \rho_s(\hat \z_3) >K\gamma} \frac{1}{(s-\sigma )} \|I_{N\times N} - \M_{\frac{s-t}{s-\sigma}} \|^\beta |\hat \z_3|^\beta q( \hat \z_3) d \hat\z_3 ds\Big).
\end{eqnarray*}
Observe now that
$$\|I_{N\times N} - \M_{\frac{s-t}{s-\sigma}} \| \le C \left| 1- \Big( \frac{s-t}{s-\sigma} \Big)^{n-1}\right| \le C \frac{|t-\sigma| |s-t|^{n-2} }{|s-\sigma|^{n-1}}.
$$
We thus get
\begin{eqnarray*}
I_T^2
 \le C\Big( \int_{s\ge \sigma\vee t  , \, \bar \rho_s(\bar \z_3) >K\gamma} \frac{1}{(s-\sigma )}
\left(\frac{|t-\sigma| |s-t|^{n-2} }{|s-\sigma|^{n-1}}\right)^\beta
|\bar\z_3|^\beta q(\bar\z_3) d\bar \z_3 ds\\
+\int_{s\ge \sigma\vee t  , \, \rho_s(\hat \z_3) >K\gamma} \frac{1}{(s-\sigma )}
\left(\frac{|t-\sigma| |s-t|^{n-2} }{|s-\sigma|^{n-1}}\right)^\beta
|\hat\z_3|^\beta q(\hat\z_3) d\hat \z_3 ds \Big)
=:I_T^{21}+I_T^{22}.
\end{eqnarray*}
Let us first deal with $I_T^{22} $ which already has the good form to apply Lemma \ref{LABEL_LEMME_A_FAIRE} since it involves $\rho_s(\hat \z_3) $ and not $\bar \rho_s(\bar \z_3)$. Precisely,
\begin{eqnarray*}
I_T^{22}&
\le
&
C \int_{s\ge \sigma\vee t   , \, \rho_s(\hat \z_3) >K\gamma   , \,  \{s-\sigma \le \frac12( s-t)\}   \cup  \{s-\sigma > \frac12( s-t)\}  } \frac{1}{(s-\sigma ) }
\left(\frac{|t-\sigma| |s-t|^{n-2} }{|s-\sigma|^{n-1}}\right)^\beta
|\hat\z_3|^\beta q( \hat\z_3) d \hat\z_3 ds\\
&
=:
& 
I_T^{221}+ I_T^{222}. 
\end{eqnarray*}
For the second contribution, since $ s-\sigma > \frac{1}{2} (s-t)$ we can bound the ratio: $\frac{(s-t)^{n-2}}{(s-\sigma)^{n-1}}\le C \frac{1}{s-\sigma}$ so that:
\begin{eqnarray*}	
 I_T^{222} &\le&  \int_{s\ge \sigma\vee t   , \,  \rho_s(\hat \z_3) >K\gamma   , \, s-\sigma > \frac12( s-t)  }|t-\sigma|^\beta   \frac{1}{(s-\sigma )^{\beta+1} }
|\hat\z_3|^\beta q(\hat \z_3) d\hat \z_3 ds\\
&\le& \gamma^{\alpha\beta}\int_{s\ge \sigma\vee t   , \,  \rho_s(\hat \z_3) >K\gamma  }   \frac{1}{(s-\sigma )^{\beta+1} }
|\hat\z_3|^\beta q(\hat \z_3) d \hat\z_3 ds.
\end{eqnarray*}
We conclude $I_T^{222} \le C$ using Lemma \ref{LABEL_LEMME_A_FAIRE} with
$\delta = \kappa= \beta$ (choosing $\beta$ small enough and $K$ large enough).
We now turn to $I_T^{221}$.
We can bound
\begin{equation}
\label{STIMA_SOTTILE}
\left(\frac{|t-\sigma| |s-t|^{n-2} }{|s-\sigma|^{n-1}}\right)^\beta \le \left( \frac{|t-\sigma|}{|s-t|} \right)^{\beta} \frac{|s-t|^{(n-1)\beta}}{(s-\sigma)^{(n-1)\beta}}  \le C\frac{\gamma^{\alpha(n-1)\beta}}{(s-\sigma)^{(n-1)\beta}},
\end{equation}
recalling that since $s-\sigma\le \frac 12 (s-t) $ then $s-t\le 2(\sigma-t)\le 2\gamma^\alpha $ (moreover, $|t- \sigma| \le |s-t|$).
Thus, we obtain:
\begin{eqnarray*}	
 I_T^{221} &\le&\gamma^{\alpha(n-1)\beta}  \int_{s\ge \sigma\vee t   , \, \rho_s(\hat \z_3) >K\gamma   , \, s-\sigma \le \frac12( s-t)  }
\frac{1}{(s-\sigma)^{1+(n-1)\beta}}
|\hat\z_3|^\beta q( \hat\z_3) d \hat\z_3 ds\\
&\le&
\gamma^{\alpha(n-1)\beta}  \int_{s\ge \sigma\vee t   , \,  \rho_s(\hat \z_3) >K\gamma  }
\frac{1}{(s-\sigma)^{1+(n-1)\beta}}
|\hat\z_3|^\beta q(\hat \z_3) d \hat\z_3 ds,
\end{eqnarray*}
where we are once again in position to apply Lemma \ref{LABEL_LEMME_A_FAIRE},
with $\delta = (n-1)\beta$, $\kappa=\beta$.
The control $I_T^{22}\le C$ follows.
Let us now turn to $I_T^{21} $ and write:
\begin{eqnarray*}
I_T^{21}&\le&
C \int_{s\ge \sigma\vee t   , \,  \bar \rho_s(\bar \z_3) >K\gamma   , \,  \{s-\sigma \le \frac12( s-t)\} \cup \{s-\sigma > \frac12( s-t)\}  } \frac{1}{(s-\sigma ) }
\left(\frac{|t-\sigma| |s-t|^{n-2} }{|s-\sigma|^{n-1}}\right)^\beta
|\bar\z_3|^\beta q( \bar \z_3) d \bar\z_3 ds\\
&=:& I_T^{211}+ I_T^{212}.
\end{eqnarray*}
For $I_T^{212} $, it is easily seen from the definition of $\bar \rho_s $ in \eqref{DEF_RHO_BAR} that on the considered integration set:
$\bar \rho_s(\bar \z_3)\le 2\rho_s(\bar \z_3) $ with $u(s)=s-\sigma $ in the notation of Lemma \ref{LABEL_LEMME_A_FAIRE}. Thus,
\begin{eqnarray*}
I_T^{212}\le C \int_{s\ge \sigma\vee t   , \,  \rho_s(\bar \z_3) >\frac K2 \gamma } \frac{1}{(s-\sigma ) }
\left(\frac{|t-\sigma| |s-t|^{n-2} }{|s-\sigma|^{n-1}}\right)^\beta
|\bar\z_3|^\beta q( \bar \z_3) d \bar\z_3 ds.
\end{eqnarray*}
This term can then, up to choosing $K$ large enough, be treated as $I_T^{22} $. For $I_T^{211} $ we again use \eqref{STIMA_SOTTILE} to get:
\begin{eqnarray*}
I_T^{211}&\le&
C\gamma^{\alpha(n-1)\beta}\int_{s\ge \sigma\vee t  , \, \bar \rho_s(\bar \z_3) >K\gamma   , \,  s-\sigma \le \frac12( s-t)   } \frac{1}{(s-\sigma )^{1+(n-1)\beta} }
|\bar\z_3|^\beta q( \bar \z_3) d \bar\z_3 ds.
\end{eqnarray*}
On the considered set, since $s-t\le s-\sigma+\sigma-t \le 2 \gamma^\alpha $,  we have  
  $ \sum_{i=1}^n \Big ( |\bar \z_{3,i}| \frac{(s-\sigma)^{\frac 1\alpha}(s-t)^{i-1}}{\gamma^{1+\alpha(i-1)}} \Big)^{\frac{1}{1+\alpha(i-1)}} $ $\ge  K-2^{ 1/\alpha};$ now for
 $K$ large enough s.t. $\frac{ K-2^{ 1/\alpha}} {n} \ge 1$, we find
$$
\Big\{\bar \rho_s(\bar \z_3) >K\gamma , \, s-\sigma \le \frac12( s-t)\Big\}\subset \Big\{ \sum_{i=1}^n |\bar \z_{3,i}| \frac{(s-\sigma)^{\frac 1\alpha}(s-t)^{i-1}}{\gamma^{1+\alpha(i-1)}}\ge \frac{K- 2^{1/\alpha} }{n}, s-\sigma \le \frac 12 (s-t) \Big\} =:A(\bar \z_3).
$$
Also, again from $s-t\le 2\gamma^\alpha $,
$A(\bar \z_3)\subset \Big\{ \sum_{i=1}^n |\bar \z_{3,i}| 2^{i-1}\frac{(s-\sigma)^{ 1/\alpha}}{\gamma}\ge \frac {K- 2^{\frac 1\alpha}}n, s-\sigma \le \gamma^\alpha \Big\} = B(\bar \z_3).
$
 On $B(\bar \z_3)$ we have $\gamma \le \tilde C \, |\bar \z_3| (s-\sigma)^{\frac 1\alpha}$. Choosing $\beta \in (0,1],\theta>0 $ s.t. $(n-1)\beta-\frac\theta \alpha<0 $ and $\beta+\theta< \alpha$
 we get:
\def\stima12{
Observe now that, since $s-t\le s-\sigma+\sigma-t \le 2 \gamma^\alpha $ on the considered set, we have for $(K-2)/n\ge 1$:
$$\Big\{\bar \rho_s(\bar \z_3) >K\gamma , \{s-\sigma \le \frac12( s-t)\}\Big\}\subset\Big\{ \sum_{i=1}^n |\bar \z_{3,i}| \frac{(s-\sigma)^{\frac 1\alpha}(s-t)^{i-1}}{\gamma^{1+\alpha(i-1)}}\ge \frac{K-2}n, s-\sigma \le \frac 12 (s-t) \Big\} =:A(\bar \z_3).$$
Also, again from $s-t\le 2\gamma^\alpha $,
$$A(\bar \z_3)\subset \Big\{ \sum_{i=1}^n |\bar \z_{3,i}| 2^{i-1}\frac{(s-\sigma)^{\frac 1\alpha}}{\gamma}\ge \frac {K-2}n, s-\sigma \le \gamma^\alpha \Big\},$$
and choosing $\beta,\theta>0 $ s.t. $(n-1)\beta-\frac\theta \alpha<0 $ and $\beta+\theta< \alpha$ we get:
}
\begin{eqnarray} \label{vaii}
I_T^{211}&\le&
C\gamma^{\alpha(n-1)\beta-\theta}\int_{\{0\le s-\sigma \le \gamma^\alpha\}   } \frac{1}{(s-\sigma )^{1+(n-1)\beta-\frac\theta \alpha} }\int_{\R^N}
|\bar\z_3|^{\beta+\theta} q( \bar \z_3) d \bar\z_3 ds\le C.
\end{eqnarray}
\begin{trivlist}
\item[-] \textit{Control of $I_{i,T}^1$.}
\end{trivlist}
Finally, let us deal with $(I_{i,T}^1)_{i\in \leftB 1,n\rightB}$. Write
\begin{eqnarray*}
I_{i,T}^1&=&\int_{s\ge \sigma\vee t   , \, \rho_s\Big(\frac{\z_3}{(s-t)^{\frac 1\alpha}}\Big) >K\gamma} \Big|\Delta^{\frac{\alpha_i}2,A,i,s-t} p_S(s-t,  \z_3)-\Delta^{\frac{\alpha_i} 2,A,i,s-\sigma} p_S(s-\sigma,  \z_3)\Big|d \z_3 ds\\
&\le & \int_{s\ge \sigma\vee t  , \, \rho_s\Big(\frac{\z_3}{(s-t)^{\frac 1\alpha}}\Big) >K\gamma   , \,  \{\sigma\le \frac{s+t}{2} \} \cup\{ \sigma> \frac{s+t}{2}\}}\Big|\Delta^{\frac {\alpha_i} 2,A,i,s-t} p_S(s-t,  \z_3)-\Delta^{\frac{\alpha_i} 2,A,i,s-\sigma} p_S(s-\sigma,  \z_3)\Big|d \z_3 ds\\
&=:&I_{i,T}^{11}+I_{i,T}^{12}.
\end{eqnarray*}
From equation \eqref{CTR_DER_TEMPS_SING_KER} in Lemma \ref{SENS_SING_STAB},  introducing for all $\lambda \in [0,1],\ u_\lambda(s):=\lambda(s-t)+(1-\lambda)(s-\sigma) $,
we derive: 
\begin{eqnarray*}
I_{i,T}^{11}&=&\int_{s\ge \sigma\vee t   , \,  \rho_s\Big(\frac{\z_3}{(s-t)^{\frac 1\alpha}}\Big) >K\gamma   , \,   \sigma\le \frac{s+t}{2} }\int_0^1   \Big| \partial_t \Delta^{\frac {\alpha_i} 2,A,i,u_\lambda(s)} p_S( u_\lambda(s)
,  \z_3)\Big| |t-\sigma| \, d\lambda d \z_3 ds\\
&\le &C \int_{s\ge \sigma\vee t   , \,  \rho_s\Big(\frac{\z_3}{(s-t)^{\frac 1\alpha}}\Big) >K\gamma   , \,  \sigma\le \frac{s+t}{2} } \int_0^1  \frac{|t-\sigma|}{|
u_\lambda(s)
|^2}q(u_\lambda(s)
 ,  \z_3)\, d\lambda  d \z_3 ds=:I_{T}^{11}.
 \end{eqnarray*}
Changing  variable:  $ \tilde \z_3 = [ u_\lambda(s)
]^{-\frac 1 \alpha}\z_3 $ and using as before  $q(t,\x) = {t^{- \frac N\alpha}} \, q (1, t^{- \frac 1\alpha} \x),$  we find 
$$
I_{T}^{11} 
\le C \int_{s\ge \sigma\vee t   , \,  \rho_s\Big(\frac{\tilde \z_3 \, [ u_\lambda(s)
]^{\frac 1 \alpha}}{(s-t)^{\frac 1\alpha}}\Big) >K\gamma   , \,  \sigma\le \frac{s+t}{2} } \int_0^1  \frac{|t-\sigma|}{|
u_\lambda(s)
|^2}q(  \tilde \z_3)\, d\lambda  d \tilde \z_3 ds.
$$
Since $u_{\lambda}(s) \le s-t$, $\lambda \in [0,1]$, we have that  $\rho_s\Big(\frac{\tilde \z_3 \, [ u_\lambda(s)
]^{\frac 1 \alpha}}{(s-t)^{\frac 1\alpha}}\Big) \le 
 \rho_s (\tilde \z_3 )$
 and
we get
$$
I_{T}^{11} 
\le C \int_{s\ge \sigma\vee t   , \,  \rho_s (\tilde \z_3  ) >K\gamma   , \,  \sigma\le \frac{s+t}{2} } \int_0^1  \frac{|t-\sigma|}{|
u_\lambda(s)
|^2}q(  \tilde \z_3)\, d\lambda  d \tilde \z_3 ds.
$$
\def\stima13{
 $$
\tilde \z_3 = \big( u_\lambda(s)
\big
)^{-\frac 1 \alpha}\z_3 
 \big)^{-\frac 1 \alpha}  \M_{s-t}^{-1}( e^{(s-t)A}\x-\y),
$$
with $\z_3 $ as in \eqref{Z_CTR_T}, we deduce from \eqref{CTR_CAMBIO_RHO} that
\begin{eqnarray*}
\rho_s\Big(\frac{\z_3}{(s-t)^{\frac 1\alpha}} \Big)&=&(t-s)^{\frac 1\alpha}+\sum_{i=1}^n |\tilde \z_{3,i}|^{\frac{1}{\alpha(i-1)+1}}  (t-s)^{\frac{i-1}{\alpha(i-1)+1}}(u_\lambda(s))^{\frac{1}{\alpha(\alpha(i-1)+1)}}\\
&\le & (t-s)^{\frac 1\alpha}+\sum_{i=1}^n |\tilde \z_{3,i}|^{\frac{1}{\alpha(i-1)+1}}  (u_\lambda(s))^{\frac{1}{\alpha}}=\rho_s(\tilde \z_3),
\end{eqnarray*}
for $u(s)=u_\lambda(s) $ with the notations of Lemma \ref{LABEL_LEMME_A_FAIRE}. 
%
}
Observe now that on $\{\sigma\le \frac{s+t}{2}\} $, we have
 $(s-t) = \lambda (s-t) +(1-\lambda) (s-t)
 \le 2 u_\lambda(s) $. Hence
$$
\frac{|t-\sigma|}{|u_\lambda(s)|}=\frac{|t-\sigma|}{|
\lambda (s-t) + (1-\lambda)(s-\sigma)
|} \le 2,\;\; \lambda \in [0,1].
$$
Hence, for all $\beta \in (0,1] $,
\begin{eqnarray*}
I_T^{11}&
\le 
& 
C \int_{s\ge \sigma\vee t   , \,  \rho_s(\tilde \z_3) >K\gamma   , \,  \sigma\le \frac{s+t}{2}}\int_0^1  \frac{1}{
|u_\lambda(s)|
}\left(\frac{|t-\sigma|}{| u_\lambda(s)
|}\right)^{\beta}q(\tilde \z_3) \, d\lambda  d\tilde \z_3 ds\\
&
\le 
&
C\gamma^{\alpha\beta}\int_0^1 d\lambda \int_{s\ge \sigma\vee t  , \, \rho_s(\tilde \z_3) >K\gamma} \frac{1}{|u_\lambda(s)|^{1+\beta}}q(\tilde \z_3) \,   d\tilde \z_3 ds.
\end{eqnarray*}
We again conclude from Lemma \ref{LABEL_LEMME_A_FAIRE} taking $\delta=\beta, \kappa=0$. This gives $I_T^{11}\le C $.


For $I_{i,T}^{12}$, let us emphasize that on $\{ \sigma >\frac{s+t}{2}\} $ we have $s-\sigma\le \sigma-t\le \gamma^\alpha $. Thus, on the considered set we also have $ s-t\le 2\gamma^\alpha$. In this case, the Taylor expansion is not useful, we directly use the estimate \eqref{CTR_BOUND_SING_KER} of Lemma \ref{SENS_SING_STAB} on $\Delta^{\frac {\alpha_i} 2,A,i,.} p_S $.   We write, for all $i\in \leftB 1,n\rightB $:
\begin{eqnarray*}
I_{i,T}^{12}&\le& C\Bigg(\int_{s\ge \sigma\vee t , \, \rho_s\Big(\frac{\z_3}{(s-t)^{\frac 1 \alpha}} \Big) >K\gamma , \, \sigma> \frac{s+t}{2} } \frac{1}{s-\sigma}q(s-\sigma, \z_3) d\z_3ds\\
&&+\int_{s\ge \sigma\vee t , \, \rho_s\Big( \frac{\z_3}{(s-t)^{\frac 1\alpha}}\Big) >K\gamma , \, \sigma> \frac{s+t}{2}}\frac{1}{s-t}q(s-t, \z_3)  d\z_3 ds\Bigg)
=:I_T^{121}+I_T^{122}.
\end{eqnarray*}
For $I_T^{122} $, we change variable to $\check \z_3:=\frac{ \z_3}{(s-t)^{\frac 1 \alpha} }$; recalling Lemma \ref{LABEL_LEMME_A_FAIRE}  with $u(s)= s-t$ we have $\{ \rho_{s}(\check \z_3)>K\gamma\} $ $ =:
\big\{ (s-t)^{\frac 1 \alpha} $ $+\sum_{i=1}^n |\check \z_{3,i}|^{\frac{1}{1+\alpha(i-1)}} (s-t)^{\frac 1\alpha}$ $ >K\gamma\big\};  $ hence
\begin{align*}
I_T^{122}\le  \int_{s\ge \sigma\vee t , \,\rho_{s}(\check \z_3 ) >K\gamma , \, |s-t|\le c\gamma^\alpha} \frac{C}{s-t}q(\check \z_3) d\check\z_3 ds
\\
\le  \int_{\sum_{i=1}^n|\check \z_{3,i}|^{\frac{1}{\alpha(i-1)+1}}|s-t|^{\frac 1\alpha} >(K-c^{1/\alpha} )\gamma , \, |s-t|\le c\gamma^\alpha} \frac{C}{|s-t|}q(\check \z_3) d\check\z_3 ds,
\end{align*}
which is exactly equation \eqref{READY_FOR_CTR} with $\kappa=\delta =0 $.  From Remark \eqref{DELTA_0}, this yields $ I_T^{122}\le C $.

For $I_T^{121} $, we again change variable to $\bar \z_3:=\frac{ \z_3}{(s-\sigma)^{\frac 1 \alpha} }$ as in $I_T^{21} $. This yields from \eqref{CTR_CAMBIO_RHO} that
$$
\Big\{ \rho_s\Big(\frac{\z_3}{(s-t)^{\frac 1\alpha}} \Big)>K\gamma\Big\}=\Big\{ (s-t)^{\frac 1 \alpha}+\sum_{i=1}^n |\bar \z_{3,i}(s-\sigma)^{\frac 1\alpha}(s-t)^{i-1}|^{\frac{1}{\alpha(i-1)+1}} >K\gamma\Big\}=:\{ \bar \rho_{s}(\bar \z_3)>K\gamma\},
$$
with $\bar \rho_s(\bar \z_3) $  as in \eqref{DEF_RHO_BAR}. Since
  $\Big\{ \rho_s\Big(\frac{\z_3}{(s-t)^{\frac 1\alpha}} \Big)>K\gamma, \, 2\sigma> {s+t} \Big \}$ $ \subset \{ \bar \rho_{s}(\bar \z_3)>K\gamma, \, |s-\sigma| \le \frac{1}{2} (s-t)\}.$
 Proceeding as for $I_T^{211} $, we arrive at  \eqref{vaii}
with $\beta =0$.
 We eventually get $I_T^1\le C $ which completes the proof. \hfill $\square $

\mysection{Proof of the 
Key Lemma \ref{KEY_LEMMA}
}
\label{KEY_SECTION}
We split the analysis for the two terms involved in Lemma \ref{KEY_LEMMA}. Roughly speaking, for the $(K_{i,\varepsilon}^Ff)_{i\in \leftB 1,n\rightB}$ there are \textit{no singularity} and we derive the required boundedness rather directly. On the other hand, the $(K_{i,\varepsilon}^Cf)_{i\in \leftB 1,n\rightB}$ need to be analysed much more carefully.

\subsection{Proof of the estimate on $(K_{i,\varepsilon}^Ff)_{i\in \leftB 1,n\rightB}$} \label{SEC_K_EPS_F}

In this section, we prove the following estimate. For a given 
$p \in (1,\infty)$, we have that there exists a constant $C_{p}$ such that for all $i\in \leftB 1,n\rightB $  and all  $f\in  
{\mathscr T}(\R^{1+N})$:
\begin{equation} \label{CTR_LP_LOIN}
\| K_{i,\varepsilon}^F f\|_{L^p(\Sc)}^p \le C_{p} \| f\|_{L^p(\Sc)}^p.
\end{equation}
We first write that for $i\in \leftB 1,n\rightB $:
\begin{eqnarray*}
\| K_{i,\varepsilon}^F f\|_{L^p(\Sc)}^p = \int_{\Sc}  |K_{i,\varepsilon}^F f (t,\x) |^p dt d\x
=\int_{\Sc} \left| \int_t^T \int_{\R^N} \I_{ d\big( (t,\x),(s,\y)\big) >c_0 }  \I_{|s-t| > \varepsilon } \Delta_{\x_i}^{\frac{\alpha_i} 2}p_\Lambda(s-t,\x,\y) f(s,\y) d\y ds  \right|^p  d\x dt\\
\le \int_{\Sc} \left( \int_t^T \int_{\R^N} \I_{ d\big( (t,\x),(s,\y)\big)>c_0 } \frac{1}{s-t}  \det(\M_{s-t})^{-1}q\Big(s-t, \M_{s-t}^{-1}\big(\y-e^{(s-t)A}\x\big) \Big) |f(s,\y)| d\y ds  \right)^p  d\x dt,
\end{eqnarray*}
where to get the last inequality, we discarded the time indicator and applied equation \eqref{CTR_DER_TEMPS_SING_KER} from Lemma \ref{SENS_SING_STAB} to estimate the fractional derivative (recalling 
the correspondence between $\Delta_{\x_i}^{\frac{\alpha_i} 2 }p_\Lambda(s,t,\x,\y) $ and $\Delta^{\frac{\alpha_i}{2},A,i,s-t}p_S\big(s-t, \M_{s-t}^{-1}(e^{(s-t)A}\x-\y)\big) $;
see \eqref{CORRISPONDENZA}).
Now, let us introduce $r>1$ such that $\frac 1 r + \frac 1 p = 1$.
Write
\begin{eqnarray*}
\frac{\det(\M_{s-t})^{-1}}{s-t}q\Big(s-t, \M_{s-t}^{-1}\big(\y-e^{(s-t)A}\x\big) \Big) |f(s,\y)| \\
= \Big[\frac{\det(\M_{s-t})^{-1}}{s-t}q\Big(s-t, \M_{s-t}^{-1}\big(\y-e^{(s-t)A}\x\big) \Big) \Big]^{\frac{1}{r}}
\Big[\frac{\det(\M_{s-t})^{-1}}{s-t}q\Big(s-t, \M_{s-t}^{-1}\big(\y-e^{(s-t)A}\x\big) \Big) \Big]^{\frac{1}{p}}|f(s,\y)|.
\end{eqnarray*}
To proceed we recall the following important equivalence result:
There exists $\kappa:=\kappa(\A{A})\ge 1$ s.t. for all $\big( (t,\x), (s,\y) \big) \in {\Sc}^2 $:
\begin{equation}
\label{EQUIV_SIMM_E_NO}
\kappa^{-1}\rho\big(s-t,e^{(s-t)A}\x-\y)  \le        d\big( (t,\x),(s,\y) \big) \le \kappa \rho\big(s-t,e^{(s-t)A}\x-\y).
\end{equation}
This equivalence can be deduced from \eqref{EQUIV_FLOW} established in the proof of Proposition \ref{PROP_DOUBLING} below.
Setting $\rho(s-t,e^{(s-t)A}\x-\y) := \rho$ the H\"older inequality yields:
\begin{eqnarray*}
\| K_{i,\varepsilon}^F f\|_{L^p(\Sc)}^p 
&\le& \int_{\Sc} \left( \int_t^T \int_{\R^N} \I_{ \rho >\frac{c_0}{ C}} \; \frac{1}{s-t}  \det(\M_{s-t})^{-1}q\Big(s-t, \M_{s-t}^{-1}\big(\y-e^{(s-t)A}\x\big) \Big)  d\y ds \right)^{\frac{p}{r}} 
\\
&&\times 
\left( \int_t^T \int_{\R^N} \I_{ \rho >\frac{c_0}C } \frac{1}{s-t}  \det(\M_{s-t})^{-1}q\Big(s-t, \M_{s-t}^{-1}\big(\y-e^{(s-t)A}\x\big) \Big) |f(s,\y)|^p  d\y ds\right)  d\x  dt,
\end{eqnarray*}
for $C:=C(\A{A})\ge 1$. Now, set $\z = \frac{\M_{s-t}^{-1}\big(\y-e^{(s-t)A}\x\big)} {|s-t|^{1/\alpha}}$; as in \eqref{CTR_CAMBIO_RHO} $\rho= \rho_s(\z)$. By  
the Fubini theorem, setting $\tilde c_0:=\frac{c_0}{C} $, we get
\begin{equation}\label{RHO_MACRO}
\begin{split}
\int_{t}^T\int_{\R^N} \I_{ \rho >\tilde c_0 } \frac{1}{s-t}  \det(\M_{s-t})^{-1}q\Big(s-t, \M_{s-t}^{-1}\big(\y-e^{(s-t)A}\x\big) \Big) d\y ds\le
\int_{ \rho_{s}(\z) >\tilde c_0 }  \frac{1}{|s-t|}  q(\z) d\z ds\le C ,\\
\int_{-T}^s\int_{\R^N} \I_{ \rho >\tilde c_0 } \frac{1}{s-t}  \det(\M_{s-t})^{-1}q\Big(s-t, \M_{s-t}^{-1}\big(\y-e^{(s-t)A}\x\big) \Big) d\x dt \le
\int_{ \rho_{t}(\z) >\tilde c_0 }  \frac{1}{|s-t|}  q(\z) d\z dt\le C ,
\end{split}
\end{equation}
with the notations of Lemma \ref{LABEL_LEMME_A_FAIRE}, taking $u(s) = s-t$ for the first integral and $u(t)=s-t $ for the second (see  also the next remark).
\begin{remark}\label{REM_RHO_MACRO}
Note that in \eqref{RHO_MACRO},  $\tilde c_0$ is a fixed constant. This allows to  get rid of the singularity.
Indeed, as in Lemma \ref{LABEL_LEMME_A_FAIRE}, either $|s-t|\ge \frac{\tilde c_0}{2}$ and there is no singularity, or
there exists $i \in \leftB 1,n\rightB$  such that $|\z_i|^{\frac{1}{\alpha(i-1)+1}}(s-t)^{\frac 1 \alpha} \ge \frac{\tilde c_0}{2n}$.
By the arguments of the proof of  Lemma \ref{LABEL_LEMME_A_FAIRE}, see also Remark \ref{DELTA_0}, we get the estimates of \eqref{RHO_MACRO}.
\end{remark}

\subsection{Proof of the estimate on the $(K_{i,\varepsilon}^Cf)_{i\in \leftB 1,n\rightB}$}\label{SEC_K_EPS_C}

In this section, we prove the following estimate for all $i\in \leftB 1,n\rightB,\ p \in (1,\infty)$:
\begin{equation}
\label{EST_LP_TRUNC_SING}
\| K_{i,\varepsilon}^C f\|_{L^p(\Sc)} \le C_{p} \| f\|_{L^p(\Sc)}.
\end{equation}
The idea is to rely first on Theorem \ref{coi}
to derive the estimate for $p \in (1,2]$.  To this purpose  
 we use that $(\Sc,d, \mu) $, where $\mu$ is the Lebesgue measure and $d$ is defined in \eqref{DEF_QD}, can be viewed as a homogeneous space (cf. Proposition \ref{PROP_DOUBLING}).

Then, to extend the estimate for all $p>2$ we use a duality argument.
 Let us recall  that by definition of $K_{i,\varepsilon}^Cf$, the kernel:
$$
k_{i,\varepsilon}^C \Big( (t,\x),(s,\y)\Big)=\I_{|s-t|\ge \varepsilon} \I_{ d ( (t,\x) , (s,\y) ) \le c_0 } \Delta_{\x_i}^{\frac{\alpha_i} 2} p_\Lambda(s-t,\x,\y),
$$
is in $L^2(\Sc\times \Sc)$, thanks to the truncations. The crucial points are an 
 $L^2$ estimate and a deviation lemma.
\begin{lemma}[$L^2$ estimate for the truncated kernel] \label{LEMME_L2_TRUNC}
There exists $C_{2,T}>0$ such that for all $i\in \leftB 1,n\rightB $: 
$$
\| K_{i,\varepsilon}^C f\|_{L^2(\Sc)} \le C_{2} \| f\|_{L^2(\Sc)},\;\;
 f \in  
{\mathscr T}(\R^{1+N}).
$$
\end{lemma}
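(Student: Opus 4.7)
The plan is to deduce the truncated $L^2$ estimate by subtraction from the two results already in hand: the global $L^2$ bound of Lemma \ref{LEMME_L2} and the off-diagonal $L^p$ bound of \eqref{CTR_LP_LOIN} (specialised to $p=2$). Indeed, by the very definition of the splitting in \eqref{DEF_NUCLEO_SINGOLARE_TRONCATO} one has the pointwise identity
$$
k_{i,\varepsilon}\big((t,\x),(s,\y)\big) = k_{i,\varepsilon}^C\big((t,\x),(s,\y)\big) + k_{i,\varepsilon}^F\big((t,\x),(s,\y)\big),
$$
and therefore, integrating against $f \in \mathscr{T}(\R^{1+N})$,
$$
K_{i,\varepsilon}^C f(t,\x) = K_{i,\varepsilon} f(t,\x) - K_{i,\varepsilon}^F f(t,\x) = \Delta_{\x_i}^{\frac{\alpha_i}{2}} G_\varepsilon f(t,\x) - K_{i,\varepsilon}^F f(t,\x),
$$
where in the last step I used the definition \eqref{DEF_K_EPSILON} together with the representation that follows from \eqref{CORRISPONDENZA}.

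Next I would apply the triangle inequality in $L^2(\Sc)$:
$$
\| K_{i,\varepsilon}^C f\|_{L^2(\Sc)} \le \| \Delta_{\x_i}^{\frac{\alpha_i}{2}} G_\varepsilon f\|_{L^2(\Sc)} + \| K_{i,\varepsilon}^F f\|_{L^2(\Sc)}.
$$
The first term is controlled by $C_2 \|f\|_{L^2(\Sc)}$ thanks to Lemma \ref{LEMME_L2}, with constant independent of $\varepsilon$ and $T$. For the second term, the estimate \eqref{CTR_LP_LOIN} (whose proof, given in Section \ref{SEC_K_EPS_F}, only uses the pointwise bound \eqref{CTR_DER_TEMPS_SING_KER} of Lemma \ref{SENS_SING_STAB}, the equivalence \eqref{EQUIV_SIMM_E_NO} between $d$ and $\rho$, and the integrability bounds \eqref{RHO_MACRO} of the singularities away from the diagonal) applied with $p=2$ yields $\| K_{i,\varepsilon}^F f\|_{L^2(\Sc)} \le C_{2} \|f\|_{L^2(\Sc)}$, again with a constant independent of $\varepsilon$ and $T$.

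Combining the two bounds gives the desired inequality with $C_2$ depending only on the parameters in \A{A}. There is no real obstacle here beyond ensuring that the constants in \eqref{CTR_LP_LOIN} and in Lemma \ref{LEMME_L2} are indeed independent of $\varepsilon$, which is the case by inspection of their respective proofs; the only subtle point would be to justify \eqref{CTR_LP_LOIN} cannot be invoked circularly, but since Section \ref{SEC_K_EPS_F} proves the far-part estimate by a direct H\"older plus Fubini argument (not by singular integral theory applied to $K_{i,\varepsilon}^C$), the two results are logically independent and the subtraction argument is legitimate.
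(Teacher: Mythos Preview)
Your proposal is correct and matches the paper's own proof essentially verbatim: both argue via $K_{i,\varepsilon}^C f = K_{i,\varepsilon} f - K_{i,\varepsilon}^F f$, then invoke Lemma~\ref{LEMME_L2} for the first term and \eqref{CTR_LP_LOIN} with $p=2$ for the second. Your use of the $L^2$ triangle inequality is in fact slightly cleaner than the paper's pointwise-square inequality (which tacitly absorbs a factor of~$2$), and your remark on the non-circularity of invoking \eqref{CTR_LP_LOIN} is a welcome clarification.
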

\begin{proof} We have:
$$
\| K_{i,\varepsilon}^C f\|_{L^2(\Sc)}^2 = \int_\Sc | K_{i,\varepsilon}^C f (t,x) |^2  d\x dt \le   \int_\Sc | K_{i,\varepsilon} f (t,x) |^2  d\x dt + \int_\Sc | K_{i,\varepsilon}^F f (t,x) |^2  d\x dt.
$$
We now use Lemma \ref{LEMME_L2} to control the first contribution and \eqref{CTR_LP_LOIN} for the second.
\end{proof}

\begin{lemma}[Deviation Controls for the truncated kernel]\label{LEMME_DEV_TRUNC}
There exist constants $K:=K(\A{A}), C:=C(\A{A}\ge 1$ s.t. for all $i\in \leftB1,n\rightB $, $\varepsilon >0$ and $(t,\x),(\sigma,\bxi) \in {\Sc}$, the following control holds:
\begin{equation}
\label{CONTROL_DEV_TRONQ}
\int_{{\Sc}\cap \big\{d\big ((t,\x),(s,\y)\big)\ge K d\big( (t,\x),(\sigma,\bxi)\big)\big\}} \left|k_{i,\varepsilon}^C \Big( (t,\x),(s,\y)\Big) - k_{i,\varepsilon}^C \Big( (\sigma,\bxi),(s,\y)\Big)\right|
 d\y ds\le C.
\end{equation}
\end{lemma}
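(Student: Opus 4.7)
\medskip

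\noindent\textbf{Plan of proof.} The strategy is to reduce Lemma \ref{LEMME_DEV_TRUNC} to the untruncated deviation estimate of Lemma \ref{LEMME_DEV} plus boundary contributions arising from the time and space cut-offs. Writing
\[
a := \I_{|s-t|\ge \varepsilon}\,\I_{d((t,\x),(s,\y))\le c_0},\qquad b := \I_{|s-\sigma|\ge \varepsilon}\,\I_{d((\sigma,\bxi),(s,\y))\le c_0},
\]
and $F_1 := \Delta_{\x_i}^{\alpha_i/2}p_\Lambda(s-t,\x,\y)$, $F_2:=\Delta_{\x_i}^{\alpha_i/2}p_\Lambda(s-\sigma,\bxi,\y)$, I would decompose
$$
k_{i,\varepsilon}^{C}\big((t,\x),(s,\y)\big)-k_{i,\varepsilon}^{C}\big((\sigma,\bxi),(s,\y)\big) = a(F_1-F_2) + (a-b)F_2,
$$
and set $d^* := d((t,\x),(\sigma,\bxi))$, $\gamma := \rho(\sigma-t,e^{(\sigma-t)A}\x-\bxi)$.

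\smallskip

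\noindent\emph{Main term $a(F_1-F_2)$.} Since $a\le 1$, it suffices to bound $\int |F_1-F_2|$ over the set $\{s\ge t\vee\sigma,\ d((t,\x),(s,\y))\ge K d^*\}$. By \eqref{EQUIV_SIMM_E_NO}, the constraint $d((t,\x),(s,\y))\ge K d^*$ yields $\rho(s-t,e^{(s-t)A}\x-\y)\ge (K/\kappa^{2})\,d^*$; and by the definition of $d^*$ in \eqref{DEF_QD}, $d^*\ge \gamma/(2\kappa)$. Thus, up to enlarging $K$, the integration domain is included in $\{s\ge t\vee\sigma,\ \rho\ge K'\gamma\}$ with $K'$ matching the constant of Lemma \ref{LEMME_DEV}. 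That lemma then produces the bound $C$.

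\smallskip

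\noindent\emph{Mismatch term $(a-b)F_2$.} Further decompose
$$
|a-b|\le \underbrace{|\I_{|s-t|\ge\varepsilon}-\I_{|s-\sigma|\ge\varepsilon}|}_{=:\Delta_T}\cdot\I_{d((t,\x),(s,\y))\le c_0}+\underbrace{|\I_{d((t,\x),(s,\y))\le c_0}-\I_{d((\sigma,\bxi),(s,\y))\le c_0}|}_{=:\Delta_S}.
$$
For the \emph{time boundary} part, $\Delta_T\ne 0$ forces $s$ to lie in a set of $1$D Lebesgue measure at most $4|t-\sigma|\le C(d^*)^{\alpha}$. Using the pointwise control \eqref{CTR_BOUND_SING_KER} of Lemma \ref{SENS_SING_STAB} on $F_2$, together with the correspondence \eqref{CORRISPONDENZA} and the change of variables $\z = \M_{s-\sigma}^{-1}(e^{(s-\sigma)A}\bxi-\y)/|s-\sigma|^{1/\alpha}$ already used in \eqref{RHO_MACRO}, the integral is controlled by
$$
\int_{\Delta_T\neq 0}\int_{\rho_s(\z)\ge K'\gamma}\frac{1}{|s-\sigma|}\,q(\z)\,d\z\,ds,
$$
which fits the framework of Lemma \ref{LABEL_LEMME_A_FAIRE} (and of Remark \ref{DELTA_0}) and yields a constant. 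For the \emph{space boundary} part, $\Delta_S\ne 0$ together with the quasi-triangle inequality for $d$ (Proposition \ref{PROP_DOUBLING}) forces $d((t,\x),(s,\y))\in [c_0/C, Cc_0]$ up to a term of order $d^*$; this truncates the spatial integration to a compact range of $\rho$-values. The same change of variables and Lemma \ref{LABEL_LEMME_A_FAIRE} (or the direct computation of \eqref{RHO_MACRO}) again give a bounded contribution.

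\smallskip

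\noindent\emph{Main obstacle.} The delicate point is the time boundary term, where $|s-t|$ and $|s-\sigma|$ are both of order $\varepsilon$ while the $\y$-integration has the singularity $1/|s-\sigma|$. The argument only closes because the constraint $d((t,\x),(s,\y))\ge K d^*$ forces $\rho_{s}(\z)\gtrsim \gamma$, and then the singularity absorption of Lemma \ref{LABEL_LEMME_A_FAIRE} produces an $\varepsilon$- and $T$-independent bound. The remaining work is book-keeping to verify that, after all the quasi-distance $\leftrightarrow$ pseudo-norm translations, the value of $K$ required is compatible with the ones fixed in Lemma \ref{LEMME_DEV}.
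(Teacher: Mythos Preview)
Your strategy---reduce to Lemma \ref{LEMME_DEV} plus indicator-boundary terms---is exactly the paper's, but the specific decomposition $aF_1-bF_2=a(F_1-F_2)+(a-b)F_2$ leaves a real gap in the main term. The factor $a$ forces only $s\ge t+\varepsilon$, not $s\ge t\vee\sigma$. When $t<\sigma$ and $\sigma>t+\varepsilon$ (allowed, since only $|t-\sigma|\le\gamma^\alpha$ is known), the slice $s\in[t+\varepsilon,\sigma)$ lies in the support of $a$ while $F_2\equiv 0$ there, so $a(F_1-F_2)=aF_1$ on that slice and Lemma \ref{LEMME_DEV} (stated for $s\ge t\vee\sigma$) does not cover it. The paper avoids this by carrying the product of \emph{both} time indicators on the main difference: it first splits by the spatial cut-offs into $T_1,T_2,T_3$, and inside $T_1$ isolates $\{(s-t)\wedge(s-\sigma)\ge\varepsilon\}$ for the appeal to Lemma \ref{LEMME_DEV}, so that the leftover single-kernel pieces always carry the \emph{non-singular} factor ($|F_1|$ where $s-t\ge\varepsilon$, or $|F_2|$ where $s-\sigma\ge\varepsilon$).

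This asymmetry also makes your time-boundary term harder than necessary: since the mismatch sits on $F_2$, in the region $s-t\ge\varepsilon,\ s-\sigma<\varepsilon$ you must control $|F_2|$ with the singularity $1/(s-\sigma)$ while $s-\sigma$ may be arbitrarily small. Invoking Remark \ref{DELTA_0} alone is not enough, since that remark only treats the regime $|u(s)|\le c_0\gamma^\alpha$; one needs the case split $\varepsilon\le\gamma^\alpha$ versus $\varepsilon>\gamma^\alpha$ that the paper performs explicitly (and which is precisely the ``book-keeping'' you defer). For the space-boundary, the useful fact is simply $\rho\ge c$ for a fixed constant (obtained via the quasi-triangle inequality and a dichotomy on $\gamma$ versus $c_0$), after which one closes exactly as in \eqref{RHO_MACRO} and Remark \ref{REM_RHO_MACRO}; a two-sided $\rho$-annulus is neither available nor needed.
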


\begin{proof}
We will rely on the \textit{global} deviation Lemma \ref{LEMME_DEV}.  We write:
\begin{eqnarray}
&&\int_{\Sc \cap \{d\big ((t,\x),(s,\y)\big)\ge K d\big( (t,\x),(\sigma,\bxi)\big)\}} \left|k_{i,\varepsilon}^C \Big( (t,\x),(s,\y)\Big) - k_{i,\varepsilon}^C \Big( (\sigma,\bxi),(s,\y)\Big)\right|
 d\y ds\notag\\
&\le&\int_{\rho\ge K \gamma , \,  d((t,\x),(s,\y))\le c_0 , \, d((\sigma,\bxi),(s,\y))\le c_0}\left|k_{i,\varepsilon}^C \Big( (t,\x),(s,\y)\Big) - k_{i,\varepsilon}^C \Big( (\sigma,\bxi),(s,\y)\Big)\right|  d\y ds\notag\\
&&+\int_{\rho\ge K \gamma , \, d((t,\x),(s,\y))\le c_0, \, d((\sigma,\bxi),(s,\y))> c_0}\left|k_{i,\varepsilon}^C \Big( (t,\x),(s,\y)\Big) - k_{i,\varepsilon}^C \Big( (\sigma,\bxi),(s,\y)\Big)\right|  d\y ds\notag\\
&&+\int_{\rho\ge K \gamma , \,  d((t,\x),(s,\y))> c_0, \, d((\sigma,\bxi),(s,\y))\le c_0}\left|k_{i,\varepsilon}^C \Big( (t,\x),(s,\y)\Big) - k_{i,\varepsilon}^C \Big( (\sigma,\bxi),(s,\y)\Big)\right|  d\y ds=:T_1+T_2+T_3,\label{TAGLIA_DISTANZA}
\end{eqnarray}
where we used the equivalence \eqref{EQUIV_SIMM_E_NO} implicitly modifying $K$; here  $\rho = \rho(s-t,e^{(s-t)A}\x-\y)$  
and $\gamma:=\rho(\sigma-t,e^{(\sigma-t)A}\x-\bxi)$ as in  \eqref{DEF_RHO_GAMMA}. The second and third integrals $T_2,T_3$ are dealt similarly.
For $T_1$, the difference with  Lemma \ref{LEMME_DEV} is the time indicator and the spatial localization.

\textit{Control of $T_1$ in \eqref{TAGLIA_DISTANZA}.} For this term, we split according to the relative position of $s-t$ and $s-\sigma$.
We write:
\begin{eqnarray*}
&&\int_{\rho\ge K \gamma , \, d((t,\x),(s,\y))\le c_0 , \, d((\sigma,\bxi),(s,\y))\le c_0}\left|k_{i,\varepsilon}^C \Big( (t,\x),(s,\y)\Big) - k_{i,\varepsilon}^C \Big( (\sigma,\bxi),(s,\y)\Big)\right| d\y ds\\
&\le& \int_{(s-t)\wedge (s-\sigma)\ge \varepsilon  , \, \rho\ge K \gamma}\left|\Delta_{\x_i}^{\frac{\alpha_i}{2}}p_\Lambda (s-t,\x,\y) - \Delta_{\x_i}^{\frac{\alpha_i}{2}}p_\Lambda (s-\sigma,\bxi,\y)\right| d\y ds\\
&&+ \int_{\rho\ge K \gamma} \I_{ s-t \ge \varepsilon, s-\sigma < \varepsilon} |\Delta_{\x_i}^{\frac{\alpha_i}{2}}p_\Lambda (s-t,\x,\y)|d\y ds+\int_{\rho\ge K \gamma}\I_{ s-t  < \varepsilon, s-\sigma \ge \varepsilon} |\Delta_{\x_i}^{\frac{\alpha_i}{2}}p_\Lambda (s-\sigma,\bxi,\y)| d\y ds.
\end{eqnarray*}
Since $\{(s-t)\wedge (s-\sigma)\ge \varepsilon\} \subset \{ s\ge t\vee \sigma\}  $,
the first contribution above is dealt directly with  Lemma \ref{LEMME_DEV}.
The following two are dealt similarly, and we focus on the first one.
By \eqref{CORRISPONDENZA} and   \eqref{CTR_BOUND_SING_KER} of Lemma \ref{SENS_SING_STAB}  
we get:
\begin{eqnarray*}
\int_{\rho\ge K \gamma} \I_{ s-t \ge \varepsilon, s-\sigma < \varepsilon} |\Delta_{\x_i}^{\frac{\alpha_i}{2}}p_\Lambda (s-t,\x,\y)| d\y ds\\
\le \int_{\rho\ge K \gamma} \I_{ s-t \ge \varepsilon, s-\sigma < \varepsilon} \frac{C}{s-t} \frac{1}{\det (\M_{s-t})} q\Big(s-t , \M_{s-t}^{-1} (\y-e^{(s-t)A}\x) \Big)d\y ds.
\end{eqnarray*}
We now discuss according to the position of $\gamma$ relatively to $\varepsilon$.
\begin{trivlist}	
\item[-] Assume first that $\varepsilon \le \gamma^\alpha$.
In this case, we can write
\begin{equation}
\label{c2}
|s-t| \le |s-\sigma| + |\sigma - t| \le \varepsilon + \gamma^{\alpha} \le 2 \gamma^\alpha.
\end{equation}
Consequently, we write for all $\beta>0$:
$
\left(\frac{2 \gamma^\alpha}{s-t}\right)^\beta \ge 1,
$
 and changing variables to $\z=(s-t)^{-\frac 1\alpha}\M_{s-t}^{-1}(\y-e^{(s-t)A}\x) $ in the last integral leads to:
$$
\int_{\rho\ge K \gamma} \I_{ s-t \ge \varepsilon, s-\sigma \le \varepsilon} |\Delta_{\x_i}^{\frac{\alpha_i}{2}}p_\Lambda (s-t,\x,\y)| d\y ds\le C\int_{\rho_s(\z) \ge K \gamma} \frac{\gamma^{\alpha \beta}}{(s-t)^{1+\beta}} q(\z)  d\z ds,
$$
and we conclude with Lemma \ref{LABEL_LEMME_A_FAIRE}, taking $u(s) = s-t$.

\item[-] Assume now that $\varepsilon > \gamma^\alpha$.
In this case we write directly
\begin{eqnarray*}
&&\int_{\rho\ge K \gamma} \I_{ s-t \ge \varepsilon, s-\sigma < \varepsilon} \frac{1}{s-t} \frac{1}{\det \M_{s-t}}q\Big( s-t ,\M_{s-t}^{-1} (\y-e^{(s-t)A}\x)\Big)  d\y ds \\
&\le& \frac{1}{\varepsilon}\int_{\rho\ge K \gamma} \I_{ s-t \ge \varepsilon, s-\sigma < \varepsilon}\frac{1}{\det \M_{s-t}}q\Big( s-t ,\M_{s-t}^{-1}( \y-e^{(s-t)A}\x )\Big)  d\y ds.
\end{eqnarray*}
Also, since $\varepsilon  > \gamma^\alpha$, using \eqref{c2} 
 we actually have $|s-t|\le 2 \varepsilon$, so that:

\begin{eqnarray*}
\int_{\rho\ge K \gamma} \I_{ s-t \ge \varepsilon, s-\sigma < \varepsilon} |\Delta_{\x_i}^{\frac{\alpha_i}{2}}p_\Lambda (s-t,\x,\y)|   d\y ds
\le \frac{C}{\varepsilon}\int_{|s-t|\le 2 \varepsilon} \frac{1}{\det \M_{s-t}}q\Big( s-t ,\M_{s-t}^{-1}( \y-e^{(s-t)A}\x )\Big)  d\y ds \le C.
\end{eqnarray*}
\end{trivlist}

\textit{Control of $T_2,T_3 $ in \eqref{TAGLIA_DISTANZA}.} We focus on $T_2$ for which $d((t,\x),(s,\y))\le c_0,d((\sigma,\bxi),(s,\y))> c_0 $. The term $T_3 $ could be handled similarly.
We have:
\begin{eqnarray*}
T_2
= \int_{\rho\ge K \gamma , \, d((t,\x),(s,\y))\le c_0 , \, d((\sigma,\bxi), (s,\y))> c_0}\left|k_{i,\varepsilon}^C \Big( (t,\x),(s,\y)\Big)\right| d\y ds.
\end{eqnarray*}
Using the quasi triangle inequality \eqref{QDT} below, we have:
$$
c_0 \ge d((t,\x),(s,\y)) \ge \frac{d((\sigma,\bxi),(s,\y))}{\Lambda}-d((\sigma,\bxi),(t,\x))  \ge \frac{c_0}{\Lambda} -\gamma \kappa,
$$
exploiting as well \eqref{EQUIV_SIMM_E_NO} for the last inequality.
We now split according to the relative position of $\gamma$ and $c_0$.
\begin{trivlist}
\item[-] Assume first that $\gamma \le \frac{c_0}{2\Lambda \kappa}$.
In this case, we get $\frac{c_0}{\Lambda} - \gamma \kappa\ge \frac{c_0}{2\Lambda }$, and we are left with the integral:
\begin{eqnarray*}
&&\int_{\rho\ge K \gamma, d((t,\x),(s,\y))\le c_0 , \, d((\sigma,\bxi),(s,\y))> c_0}\left|k_{i,\varepsilon}^C \Big( (t,\x),(s,\y)\Big)\right| d\y ds\\
&\le&
C \int_{\rho \ge \frac{c_0}{2\Lambda} } \frac{1}{s-t} \frac{1}{\det \M_{s-t}}q\Big( s-t ,\M_{s-t}^{-1} (\y-e^{(s-t)A}\x)\Big)  d\y ds
\le C
\int_{\rho_s(\z) \ge \frac{c_0}{2\Lambda} }\frac{1}{s-t}  q(\z) d\z ds,
\end{eqnarray*}
 $\z =  \M_{s-t}^{-1} (e^{(s-t)A}\x-\y) (s-t)^{-1/\alpha}$,
bounding the fractional derivative with estimate \eqref{CTR_BOUND_SING_KER}.
We are thus exactly in the same position as in Remark \ref{REM_RHO_MACRO} that allows to control the above integral.

\item[-] Suppose now that $\gamma  > \frac{c_0}{2\Lambda}$.
In this case, we readily get:
$
\rho \ge K \gamma \ge \frac{K c_0}{2\Lambda}
$
and the corresponding integral can be bounded similarly.
\end{trivlist}
\end{proof}
 Recalling that by construction, for all $i\in \leftB 1,n\rightB $, $k_{i,\varepsilon}^C\in L^2(\Sc\times \Sc) $, estimate \eqref{EST_LP_TRUNC_SING} now follows from Lemmas \ref{LEMME_L2_TRUNC}, \ref{LEMME_DEV_TRUNC} and  Theorem \ref{coi} below
for $p\in (1,2] $ since from Proposition \ref{PROP_DOUBLING}, $(\Sc,d, \mu) $ can be viewed as a homogeneous space. Estimate \eqref{EST_LP_TRUNC_SING}  for $p\in (2,+\infty)$ can then be derived by duality as follows. Let $p\in (2,+\infty) $  be given and consider $g\in L^r(\Sc)$ with $r>1,\ \frac{1}{r}+\frac{1}p=1 $. Then, for all $i\in \leftB 1,n\rightB $ and $f\in L^p(\Sc)$,
\begin{eqnarray*}
\int_\Sc K_{i,\varepsilon}^Cf(t,\x) g(t,\x) d\x dt&=&\int_{-T}^T \int_{\R^N}\Big(\int_t^T  \int_{\R^N} k_{i,\varepsilon}^C\big((t,\x),(s,\y)\big)f(s,\y)  d\y ds\Big) g(t,\x) d\x dt\\
&=&\int_{-T}^T \int_{\R^N}\Big( \int_{-T}^s \int_{\R^N}k_{i,\varepsilon}^C\big((t,\x),(s,\y)\big) g(t,\x)  d\x dt\Big) f(s,\y) d\y ds \\
&=:&\int_{-T}^T \int_{\R^N} \bar K_{i,\varepsilon}^Cg(s,\y)f(s,\y) d\y ds,
\end{eqnarray*}where $\bar K_{i,\varepsilon}^C $ is the adjoint of $K_{i,\varepsilon}^C $. Recall   that $k_{i,\varepsilon}^C\big((t,\x),(s,\y)\big):=\Delta_{\x_i}^{\frac{\alpha_i}{2}} p_{\Lambda}(s-t,\x,\y)\I_{|s-t|\ge \varepsilon} \I_{ d ( (t,\x) , (s,\y) ) \le c_0 }$; see  \eqref{DEF_NUCLEO_SINGOLARE_TRONCATO}.
From \eqref{CORRISPONDENZA}, using that 
$$\M_{s-t}^{-1}(e^{(s-t)A}\x-\y) =
\M_{s-t}^{-1} \M_{s-t} e^{A} \M_{s-t}^{-1} (\x - e^{-(s-t)A}\y),
$$
see the scaling property \eqref{str}, we find
$$
p_{\Lambda}(s-t,\x,\y)=\frac{1}{\det(\M_{s-t})}p_S\big(s-t,\M_{s-t}^{-1}(e^{(s-t)A}\x-\y)\big)=:\frac{1}{\det(\M_{s-t})}p_{\tilde S}\big(s-t,  \M_{s-t}^{-1}(e^{-(s-t)A}\y-\x)\big),
$$
with $\tilde S_{r}=e^{-A}S_{r}, r\ge 0 $, where the symmetric $\R^N$-valued, $\alpha $-stable process $S $ is defined in Remark \ref{ID_LAW}.  
We get:
\begin{eqnarray*}
\Delta_{\x_i}^{\frac{\alpha_i}{2}} p_{\Lambda}(s-t,\x,\y)=\Delta_{\x_i}^{\frac{\alpha_i}{2}}\frac{1}{\det(\M_{s-t})}p_{\tilde S}\big(s-t,  \M_{s-t}^{-1}(e^{-(s-t)A}\y-\x)\big)\\
=\frac{1}{\det(\M_{s-t})}{\rm v.p.}\int_{\R^{d_i} } \Big (p_{\tilde S}(s-t, \M_{s-t}^{-1}(e^{-(s-t)A}\y-\x+B_i z))-p_{\tilde S}(s-t, \M_{s-t}^{-1}(e^{-(s-t)A}\y-\x)) \Big) \frac{dz}{|z|^{d_i+\alpha_i}}\\
=\frac{1}{\det(\M_{s-t})}{\rm v.p.}\int_{\R^{d_i} } \Big (p_{\tilde S}(s-t, \M_{s-t}^{-1}(e^{-(s-t)A}\y-\x)+(s-t)^{-(i-1)}B_i z))\\
-p_{\tilde S}(s-t, \M_{s-t}^{-1}(e^{-(s-t)A}\y-\x)) \Big) \frac{dz}{|z|^{d_i+\alpha_i}}=:\frac{1}{\det(\M_{s-t})} \bar \Delta^{\frac{\alpha_i}2,i,s-t}p_{\tilde S}\big(s-t, \M_{s-t}^{-1}(e^{-(s-t)A}\y-\x)\big),
\end{eqnarray*}
where for  $\varphi\in C_0^{\infty}(\R^N) $ and for all $i\in \leftB 1,n\rightB,\ s>t $ the operator: 
\begin{equation}
\label{DEF_DELTA_MOD_2}
\bar \Delta^{\frac{\alpha_i}2,i,s-t}\varphi(\x):=\int_{\R^{d_i} } \Big (\varphi(\x+(s-t)^{-(i-1)}B_iz)-\varphi(\x)- (s-t)^{-(i-1)}\nabla \varphi(\x)\cdot B_iz \I_{|z|\le 1} \Big) \frac{dz}{|z|^{d_i+\alpha_i}},
\end{equation}
where $\nabla $ again stands for the full gradient on $\R^N$.
We thus conclude that
\begin{eqnarray*}
\bar K_{i,\varepsilon}^C g(s,\y)
&=&\int_{-T}^s \int_{\R^N} \frac{1}{\det(\M_{s-t})} \bar \Delta^{\frac{\alpha_i}2,i,s-t}p_{\tilde S}\big(s-t, \M_{s-t}^{-1}(e^{-(s-t)A}\y-\x)\big) \I_{|s-t|\ge \varepsilon} \I_{ d ( (t,\x) , (s,\y) ) \le c_0 }g(t,\x)  d\x dt\\
&=:&\int_{-T}^s \int_{\R^N}\bar k_{i,\varepsilon}^C\big((s,\y),(t,\x)\big) g(t,\x)  d\x dt.
\end{eqnarray*}
We derive similarly to the previous computations that for all $r\in (1,2]$, there exists $C_{r}$, s.t. for all $g\in L^r(\Sc) $,
$\|\bar K_{i,\varepsilon}^C g\|_{L^r(\Sc)} \le C_{r}\|g\|_{L^r(\Sc)}$. The control
$\| K_{i,\varepsilon}^C f\|_{L^p(\Sc)} \le C_{p}\|f\|_{L^p(\Sc)}$
follows now by duality. Lemma \ref{KEY_LEMMA} eventually  readily derives for such $p$ from \eqref{CTR_LP_LOIN} and \eqref{EST_LP_TRUNC_SING}. \qed 

\appendix

\mysection{Auxiliary Technical Results}
\label{APP_TEC}
\noindent Here we give the proof of some technical results for the sake of completeness.
We first prove estimate \eqref{CTR_ND}.
\begin{lemma}\label{bound}
There exists a constant $ c:=c(\A{A})>0$,
 such that for all $t\in [-T,T] $, and all $\p \in \R^N$:
$$
\int_0^1 \int_{\S^{d-1}} |\langle   \M_t\p, \exp(v A)B \sigma \s\rangle|^\alpha \mu(d\s)dv\ge c|\M_t\p|^\alpha.
$$
\end{lemma}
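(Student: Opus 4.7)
The key observation is that by substituting $\q := \M_t \p \in \R^N$, the inequality becomes independent of $t$: we must show that there exists $c>0$ such that
\begin{equation*}
F(\q) := \int_0^1 \int_{\S^{d-1}} |\langle \q, \exp(vA) B \sigma \s\rangle|^\alpha \mu(d\s)\, dv \;\ge\; c|\q|^\alpha, \qquad \q \in \R^N.
\end{equation*}
Since both sides of this inequality are positively $\alpha$-homogeneous in $\q$, it suffices to establish $F(\q) \ge c > 0$ for $\q \in \S^{N-1}$. The plan is then to note that $F$ is continuous on the compact sphere $\S^{N-1}$, so the conclusion will follow from a compactness argument once we show that $F(\q) > 0$ pointwise.

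To prove $F(\q) > 0$, suppose for contradiction that $F(\q_0) = 0$ for some $\q_0 \in \S^{N-1}$. Non-negativity of the integrand would force $\langle \q_0, \exp(vA) B\sigma \s\rangle = 0$ for a.e.\ $v\in[0,1]$ and $\mu$-a.e.\ $\s$, i.e.\
\begin{equation*}
\int_{\S^{d-1}} |\langle \sigma^* B^* e^{vA^*} \q_0, \s\rangle|^\alpha \mu(d\s) = 0 \quad \text{for a.e.\ } v.
\end{equation*}
The non-degeneracy assumption \A{ND} (applied in $\R^d$) then yields $\sigma^* B^* e^{vA^*} \q_0 = 0$ for a.e.\ $v$, and by continuity in $v$ this holds for \emph{all} $v\in[0,1]$. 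Since $\sigma$ is invertible by \A{UE}, we get $B^* e^{vA^*} \q_0 = 0$ on $[0,1]$, which by analyticity and repeated differentiation at $v=0$ gives
\begin{equation*}
B^* (A^*)^k \q_0 = 0, \qquad k = 0, 1, \ldots, n-1.
\end{equation*}

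The main step is to convert this into a contradiction using the H\"ormander-type structural assumption \A{H}. The point is that the Kalman-type matrix $[B \mid AB \mid A^2 B \mid \cdots \mid A^{n-1}B]$ has rank $N$: the block structure of $A$ in \eqref{DEF_A} and the fact that each $A_{i,i-1}$ has full rank $d_i$ guarantee that $A^{i-1}B$ has a non-degenerate block of size $d_i \times d$ in the $i$-th row block (namely $A_{i,i-1}A_{i-1,i-2}\cdots A_{2,1}$) and zeros below, so the columns jointly span $\R^N$. Equivalently, $\bigcap_{k=0}^{n-1} \ker\bigl(B^*(A^*)^k\bigr) = \{0\}$, forcing $\q_0 = 0$ and contradicting $|\q_0| = 1$.

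I expect the Kalman rank verification to be the genuine content; everything else is standard compactness and homogeneity. Modulo writing out the block computation for $A^{i-1}B$, the argument is complete.
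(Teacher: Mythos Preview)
Your proof is correct and follows essentially the same route as the paper: reduce via \A{ND} and compactness on $\S^{N-1}$, then invoke the Kalman rank condition coming from \A{UE} and \A{H}. The only cosmetic difference is that the paper first bounds the $\alpha$-integral below by the $L^2$ controllability Gramian (using $|x|^\alpha \ge |x|^2$ for $|x|\le 1$) and cites its classical nondegeneracy, whereas you argue directly by contradiction via differentiation of $v\mapsto B^* e^{vA^*}\q_0$.
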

\begin{proof}
Thanks to \A{ND}, we have:
$$ \int_0^1 \int_{\S^{d-1}} |\langle   \M_t\p, \exp(v A)B \sigma \s\rangle|^\alpha \mu(d\s)dv\ge c|\M_t\p|^\alpha \int_0^1|(\exp(vA)B\sigma)^*\frac{\M_t\p}{|\M_t\p|}|^\alpha  dv.
$$
Defining
$\bar C:= \inf_{\btheta \in \S^{N-1}} \int_0^1 |(\exp(v A)B \sigma)^* \btheta |^\alpha dv,$
it thus  actually suffices to prove that $\bar C>0$.
By continuity of the involved functions and compactness of $\S^{N-1}$, the infimum is actually a minimum.
We need to show that this quantity is not zero.
 Arguing as in \cite{prio:zabc:09} page 49, we can use the fact that assumptions \A{UE} and \A{H} imply the rank condition
 \begin{equation} \label{rank}
\! \! \! \! \! \!  \! \! \! \! \! \! \! \! \!  \;\;\; \mbox{Rank}
\, [B \sigma, AB\sigma, \ldots, A^{N-1}B\sigma]=N.
\end{equation}
 Here $[B \sigma, AB \sigma, \ldots, $ $A^{N-1}B \sigma]$  denotes the $N \times Nd$
 matrix,
 formed by the matrices   $B \sigma, \ldots, A^{N-1}B \sigma$.

 Set  $  M = \sup \{ | (B\sigma)^* e^{v A^*}  \btheta | \, :\,   v \in [0,1],\,
|\btheta| \in {\mathbb S}^{N-1} \}  +1. $
  since $ \Big |
\frac{ (B\sigma)^* e^{s A^*}  \btheta }
 {\, M } \Big| \le 1$, $s \in [0,1]$, we get
$$
\int_0 ^1    | (B\sigma)^* e^{v A^*}   \btheta |^{\alpha} dv
 =    M ^{\alpha}  \, \int_0 ^1
  \Big | \frac{ (B\sigma)^* e^{v A^*}   \btheta  }
 { M } \Big|^{\alpha} dv \,
 \ge \,
  M^{\alpha}  \, \int_0 ^1
  \Big | \frac{ (B\sigma)^* e^{v A^*}   \btheta  }
 {M } \Big|^{2} dv.
$$
 Let us recall  that
   the  rank condition \eqref{rank} implies (actually the two conditions are  equivalent)  the existence of
 $C_1 >0$ such that, for any $u \in \R^N$,
  $\int_0 ^1  | (B \sigma)^* e^{v A^*}  u|^2 \, dv$
 $\ge C_1 |u|^2$ (see e.g.   \cite{prio:zabc:09}; \nero{an alternative proof 
of this fact can be done following \cite{huan:meno:15} (see page 33 in \cite{huan:meno:prio:16}).
This completes the proof.}
\end{proof}

We now turn to the estimate \eqref{CTR_DER_M} for the derivative of the density $p_M$.
\begin{lemma}[Derivative of the density of the small jumps part.]\label{EST_DENS_MART}
For all $m\ge 1 $ and all multi-indices $\i=(i_1,\cdots, i_N) \in \N^N ,\ |\i|:=\sum_{j=1}^N|i_j|\le 3$, there exists $C_{m,\i}$ s.t. for all $(t,\x)\in \R_+^*\times \R^N  $:
$$
|
\partial_\x^\i p_M(t,\x)|\le \frac{C_{m,\i}}{t^{\frac{N+|\i|}{\alpha}}} \left( 1+ \frac{|\x|}{t^{\frac 1\alpha}}\right)^{-m}.
$$
\end{lemma}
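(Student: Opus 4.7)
The plan is to proceed via Fourier analysis, in three stages: scaling reduction, Fourier representation, and derivative estimates on the characteristic function.

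First I would reduce to the case $t=1$ by scaling. Substituting $r = t^{1/\alpha} s$ in the truncated L\'evy exponent of $M_t$ gives $M_t \overset{({\rm law})}{=} t^{1/\alpha} M_1$, hence $p_M(t,\x) = t^{-N/\alpha} p_M(1, t^{-1/\alpha}\x)$ and therefore $\partial_\x^\i p_M(t,\x) = t^{-(N+|\i|)/\alpha} (\partial_\x^\i p_M)(1, t^{-1/\alpha}\x)$. It is thus enough to show $|(\partial_\x^\i p_M)(1, \x)| \le C_{m,\i}(1+|\x|)^{-m}$ for all $|\i|\le 3$ and $m\ge 1$.

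Next, I would write the density through the Fourier inversion
\begin{equation*}
p_M(1, \x) = \frac{1}{(2\pi)^N} \int_{\R^N} e^{-i\langle \x, \p\rangle} e^{\Phi(\p)} d\p, \qquad \Phi(\p) = -\int_0^1 \int_{\S^{N-1}} \bigl(1 - \cos(s \langle \p, \bxi\rangle)\bigr)\, \tilde\mu_S(d\bxi)\, \frac{ds}{s^{1+\alpha}},
\end{equation*}
so that $\x^\gamma\partial_\x^\i p_M(1,\x)$ is, after $|\gamma|$ integrations by parts, the Fourier transform of $\partial_\p^\gamma[(-i\p)^\i e^{\Phi(\p)}]$. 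The whole estimate then reduces to showing that for every multi-indices $\gamma\in \N^N$ and $|\i|\le 3$, the function $\partial_\p^\gamma[\p^\i e^{\Phi(\p)}]$ belongs to $L^1(\R^N)$ with a bound depending on $\gamma,\i$.

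The third and main step is to control $e^{\Phi(\p)}$ and its derivatives. Mimicking the argument used to obtain \eqref{CTR_ND}, and splitting $s$ at the threshold $1/|\langle\p,\bxi\rangle|$ with the elementary inequality $1 - \cos u \ge \min(1, u^2/3)$, one gets $\int_0^1 (1-\cos(s\langle\p,\bxi\rangle))\frac{ds}{s^{1+\alpha}} \ge c\, |\langle\p,\bxi\rangle|^\alpha$ when $|\langle\p,\bxi\rangle|\ge 1$. Integrating against $\tilde\mu_S$ and using the non-degeneracy \A{ND} yields $\Phi(\p) \le -c|\p|^\alpha$ for $|\p|$ large enough. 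For the derivatives of $\Phi$, differentiating under the integral sign gives integrands whose modulus is dominated (through Taylor expansion of $\sin, \cos$ and the bound $|\sin u|\le \min(1,|u|)$) by $s^{|\beta|-1-\alpha}$ multiplied by a polynomial in $\langle\p,\bxi\rangle$. Since $\alpha < 2$, this leads to $|\partial_\p^\beta\Phi(\p)|\le C_\beta(1+|\p|)^{(\alpha-1)^+}$ when $|\beta|=1$, and $|\partial_\p^\beta\Phi(\p)|\le C_\beta$ when $|\beta|\ge 2$. The Fa\`a di Bruno / Leibniz formula then gives a bound of the form
\begin{equation*}
\bigl|\partial_\p^\gamma\bigl[\p^\i e^{\Phi(\p)}\bigr]\bigr|\le C_{\gamma,\i}\,(1+|\p|)^{|\i|+|\gamma|(\alpha-1)^+}\, e^{-c|\p|^\alpha},\qquad |\p|\ge R,
\end{equation*}
which is certainly integrable; on the complementary compact region smoothness of $\Phi$ suffices. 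Going back through the Fourier inversion yields the desired $(1+|\x|)^{-m}$ decay.

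The main obstacle is the third step: the analysis of $\partial_\p\Phi$ is borderline when $\alpha\ge 1$, and one has to exploit the (small) polynomial growth of $\partial_\p^\beta\Phi$ to show that after the Fa\`a di Bruno expansion the resulting polynomial factor is dominated by the stretched exponential decay $e^{-c|\p|^\alpha}$ uniformly in $\gamma$ and $\i$. Once this is secured, the rest of the proof is a routine application of Fourier inversion with integration by parts.
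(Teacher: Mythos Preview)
Your approach is correct and is essentially the same as the paper's: scaling to $t=1$, Fourier inversion, and showing that $\hat f(\q)=(-i\q)^{\i}e^{\Phi(\q)}$ lies in $\mathscr S(\R^N)$. The paper streamlines one point you work harder for: rather than splitting $s$ at $1/|\langle\p,\bxi\rangle|$ to get $\Phi(\p)\le -c|\p|^\alpha$, it simply writes $\I_{\{\rho\le 1\}}=1-\I_{\{\rho>1\}}$ and observes that the tail contribution $\int_{\S^{N-1}}\int_1^\infty(1-\cos\langle\q,\rho\bxi\rangle)\rho^{-1-\alpha}d\rho\,\tilde\mu_S(d\bxi)$ is bounded by $2\tilde\mu_S(\S^{N-1})/\alpha$, so $e^{\Phi(\q)}\le Ce^{-\int_{\S^{N-1}}|\langle\q,\bxi\rangle|^\alpha\mu_S(d\bxi)}\le Ce^{-c|\q|^\alpha}$ directly from \A{ND}. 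On the other hand, the paper asserts $\hat f\in\mathscr S(\R^N)$ from smoothness plus this pointwise decay without spelling out the derivative bounds; your Fa\`a di Bruno analysis of $\partial_\p^\beta\Phi$ (with the borderline $(\alpha-1)^+$ growth for $|\beta|=1$) is exactly what is needed to justify that step rigorously, and your resolution of it is correct.
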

\begin{proof}
Below, similarly to \eqref{STABLE_MEAS}, we decompose the L\'evy measure $\nu_S $ of $S $ as
$$
  \nu_S (D) = \int_{\S^{N-1}} \tilde \mu_S(d \bxi) \int_0^{\infty} {\I}_D (r \bxi)
 \frac{dr}{r^{1+ \alpha}}, \;\; D\in {\mathcal B}(\R^N).
 $$
 According to \eqref{dec}, expressing $p_M(t,\x)$ 
 as an inverse Fourier transform, for all multi-indices $\i=(i^1,\cdots, i^N) \in \N^N ,\ |\i|:=\sum_{j=1}^N|i_j|$ $\le 3$, 
 we have by the L\'evy-Khintchine formula (recall that $\nu_S$ is symmetric): 
 \begin{eqnarray*}
\partial_\x^\i p_M(t,\x)= \frac{1}{(2\pi)^{N}} 
 \int_{\R^{N}} e^{-i\langle \p,\x \rangle} (-i\p)^\i 
\exp \left(  t
\int_{\S^{N-1}} \int_0^{\infty}
 \Big(\cos (\langle \p, r\bxi \rangle)  - 1  \Big)\I_{\{r\le t^{\frac 1\alpha}\}} \frac{dr}{r^{1+\alpha}}\tilde \mu_{S}(d\bxi)\right)d \p,
\end{eqnarray*}
(recall that we have the term $\cos (\langle \p, r\bxi \rangle) -1 $ since $\nu_S$ is symmetric). 
Changing variables in $t^{\frac 1\alpha}\p =\q$ yields:
\begin{eqnarray*} 
\partial_\x^\i p_M(t,\x)= \frac{t^{-\frac{N+|\i|}{\alpha}}}{(2\pi)^{N}} \int_{\R^{N}} e^{-i \big\langle \q,\frac{\x}{t^{\frac 1\alpha}} \big\rangle}\\
\cdot \;  (-i\q)^\i \exp \left(  t \int_{\S^{N-1}} \int_0^{\infty}  \Big(\cos \big(\langle \q, \frac{r\bxi}{t^\frac{1}{\alpha}} \rangle \big)- 1   \Big)\I_{\{r\le t^{\frac 1\alpha}\}} \frac{dr}{r^{1+\alpha}}\tilde \mu_{S}(d\bxi)\right)d \q.
\end{eqnarray*}
Observe that changing variables to $\rho=rt^{-\frac 1 \alpha}$, we have:
\begin{eqnarray*}
&&\q^\i \exp \left(  t \int_{\S^{N-1}} \int_0^{\infty} \Big( \cos \big (\big\langle \q, \frac{r\bxi}{t^\frac{1}{\alpha}}  \big\rangle \big) - 1   \Big)\I_{\{r\le t^{\frac 1\alpha}\}} \frac{dr}{r^{1+\alpha}}\tilde \mu_{S}(d\bxi)\right)\\
&=&\q^\i \exp \left( - \int_{\S^{N-1}} \int_0^{\infty} \Big(1-\cos \langle \q, \rho\bxi\rangle \Big)\I_{\{ \rho\le 1 \}} \frac{d\rho}{\rho^{1+\alpha}}\tilde \mu_{S}(d\bxi)\right)=:\hat{f}(\q).
\end{eqnarray*}
It is not difficult to differentiate under the integral sign and get that 
$\hat f$ is infinitely differentiable as a function of $\q$
(cf. Theorem 3.7.13 in \cite{Jacob1}). 
 Besides, we can bound the truncated measure by the complete one, up to a multiplicative constant. Since $\I_{\{ \rho\le 1 \}} = 1 - \I_{\{ \rho > 1 \}}$, we obtain
 \begin{eqnarray*}
|\hat f( \q)| &\le& |\q^\i| \exp\Big(\frac{2}{\alpha}\tilde \mu_S(\S^{N-1}) \Big) \exp \left( - \int_{\S^{N-1}} \int_0^{\infty} \Big(1-\cos \langle \q, \rho\bxi\rangle \Big)\frac{d\rho}{\rho^{1+\alpha}}\tilde \mu_{S}(d\bxi)\right)
\\
&\le& C |\q|^{|\i|}\exp\Big(-\int_{\S^{N-1}} | \langle \q, \bxi \rangle |^\alpha \mu_S (d\bxi)\Big) \le C|\q|^{|\i|} e^{-c^{-1} |\q|^\alpha},
\end{eqnarray*}
 using that the spectral measure $\mu_S$ satisfies the non-degeneracy condition \A{ND} for the last inequality.
Thus, $\hat f$ belongs the Schwartz space ${\mathscr S}(\R^N) $. Denoting by $f$ its Fourier transform, we have:
$$
\forall m \ge 0, \exists C_m\ge 1, \forall \z \in \R^{N},\  |f(\z)| \le C_m (1+|\z|)^{-m}.
$$
Now since $|\partial_\x^\i p_M(t,\x)| = t^{-\frac{N+|\i|}{\alpha}} |f\left( \frac{\x}{t^{\frac 1\alpha}}\right)|$, the announced bound follows.
\end{proof}

\section{$L^p$ estimates for the local case $\alpha =2$}
\label{EST_LOCAL_CASE}

We focus here on the case $\alpha=2 $, i.e. $L_\sigma =\frac 12 {\rm Tr}(\sigma\sigma^* D_{\x_1}^2)$. In this simpler  case our main results of Theorems \ref{THE_THM1} and \ref{STIMA_STRISCIA} continue to hold. Let us consider parabolic estimates in  Theorem \ref{STIMA_STRISCIA}. We only show how to prove the result following our previous arguments.

We first note that the case  $\alpha_1=\alpha =2$ follows by \cite{bram:cerr:manf:96} and \cite{bram:cupi:lanc:prio:10}.
On the other hand we concentrate on the new case when  $i\in \leftB 2,n\rightB $. 
 We start as in  Section \ref{HOMO}, for $t>0$:  
$$\Lambda_t^\x =e^{tA}\x+\int_0^t e^{(t-s)A} B \sigma dW_s, 
$$
where $W = (W_t)$ is a $d$-dimensional Wiener process. Here we use an alternative   approach to obtain an analogous of  \eqref{d33} (this more probabilistic approach could be also used  in stable case of Section \ref{HOMO}).
 \\
By  the independence of increments of the Wiener process,  
the Fourier transform shows  that 
\begin{equation} \label{c11}
\Lambda_t^\x \overset{({\rm law})}{=}  e^{tA}\x+\int_0^t e^{s A}  B \sigma dW_s.
\end{equation} 
Using that $e^{ r t A}  = \M_t e^{ r  A} \M_t^{-1} $ (see
 \eqref{str}) and the fact that $\M_t^{-1} B \sigma = B \sigma$ we obtain
 \begin{eqnarray*}
J_t = \int_0^t e^{ s A}  B \sigma dW_s  
\overset{({\rm law})}{=}  t^{1/2} \int_0^1 e^{ r t A}  B \sigma dW_r
=   t^{1/2} \M_t \int_0^1 e^{ r  A}  B \sigma dW_r =  \M_t S_t,\\ 
\text{with} \;\; 
S_t = t^{1/2}  \int_0^1 e^{ r  A}  B \sigma dW_r \overset{({\rm law})}{=} {\bf K} {\bf W_t}, \;\;\; t \ge 0,
\end{eqnarray*} 
where ${\bf K}$ is an $N \times N$ square root of the non degenerate matrix $ {\bf \Gamma} = \int_0^1 e^{As}B \sigma \sigma^*B^*e^{A^*
s} ds$ (the non degeneracy follows by our 
assumptions on $\sigma$ and $A$), i.e. ${\bf\Gamma}={\bf K}{\bf K}^* $, and $({\bf W_t})$ is  a standard $N$-dimensional Wiener process. 

\nero{ The structure of \eqref{THE_DENS_OU} in Proposition \ref{THE_PROP_FUND} still holds
in the limit case, replacing
 $\exp\Big(-t\int_{\S^{N-1}}|\langle \p, \bxi\rangle|^\alpha \mu_S(d\bxi)\Big)$ with $\exp\big( -\frac{t}2 \langle \Gamma \p, \p \rangle\big) $. Observe as well that we can enter the previous framework constructing a symmetric measure $\mu_S $ on $\S^{N-1} $ s.t. $ \int_{\S^{N-1}}\p^* \btheta\btheta^*\p \mu_S(d\btheta)=  \int_{\S^{N-1}}|\langle \p, \btheta \rangle |^2 \mu_S(d\btheta)=\frac{\langle \Gamma \p,\p \rangle}{2} $. 
 Denoting by ${\bf K}_i$ the $i^{{\rm th}} $  column of ${\bf K} $, one can take $\mu_S=\frac 14 \sum_{i=1}^N |{\bf K}_i|(\delta_{\frac{{\bf K}_i}{|{\bf K}_i|}}+\delta_{-\frac{{\bf K}_i}{|{\bf K}_i|} }) $, where $\delta_{{\bf u}} $ stands for the Dirac mass at point  ${\mathbf u} $.
 }


The Fourier argument of Lemma \ref{LEMME_L2} still apply and the $L^2$ control stated therein remains valid. Observe 
that, to investigate the $L^p $ case for $p\neq 2 $, we clearly need to consider the quasi-distance in \eqref{DEF_QD} with  $\alpha=2 $. 

For Lemma \ref{LEMME_DEV} the case $i=1 $ involves the local operator $\Delta_{\x_1} $. Such deviations results have been proved in this framework by Bramanti \textit{et al.} \cite{bram:cerr:manf:96} (see Proposition 3.4 therein).
 We thus focus on the new contributions associated with $i\in \leftB 2,n \rightB $, which involve the operators $(\Delta_{\x_i}^{\frac{1}{1+2(i-1)}})_{i\in \leftB 2,n\rightB}$. For those contributions, the proof of Lemma  \ref{LEMME_DEV} remains the same  provided we prove the key controls of Lemmas \ref{SENS_SING_STAB} and \ref{LABEL_LEMME_A_FAIRE}. We present below a proof of Lemma \ref{SENS_SING_STAB} for $\alpha=2 $. With this result, Lemma \ref{LABEL_LEMME_A_FAIRE} still holds for  $\alpha=2 $ with the same proof.
  The final derivation of the main results of Theorems \ref{THE_THM1} and \ref{THM_STRISCIA} is then the same as in Section \ref{HOMO} and Sections \ref{STRAT_PROOF}-\ref{KEY_SECTION} respectively.
  
 \smallskip
 
\textit{Proof of Lemma \ref{SENS_SING_STAB} for $\alpha=2 $ and $i\in \leftB 2,n\rightB $.} Observe indeed that for such indexes the correspondence \eqref{CORRISPONDENZA} still holds. The main difference with the case $\alpha \in (0,2) $ that we considered before is that we do not use the previous decomposition \eqref{DECOMP_G_P}, which splits for $\alpha\in (0,2) $ the small and large jumps, but directly exploit the Gaussian character of $p_S(t,\cdot)$. 

Let  us prove point \textit{(ii)}. With the notations of \eqref{DICO_PICCOLI_GRANDI_SALTI_DI_A}, i.e.  $\Delta_{\x_i}^{\frac{\alpha_i}{2},i,A,t,l}p_S(t,\cdot),\Delta_{\x_i}^{\frac{\alpha_i}{2},i,A,t,s}p_S(t,\cdot) $ corresponding  respectively  to the large and small jumps part in the operator, we rewrite for all $t>0, (\x,\x')\in \R^{2N} $:
 \begin{eqnarray*}
\Delta_{\x_i}^{\frac{\alpha_i}{2},i,A,t}p_S(t,\x)-\Delta_{\x_i}^{\frac{\alpha_i}{2},i,A,t}p_S(t,\x') \\
=\Big(\Delta_{\x_i}^{\frac{\alpha_i}{2},i,A,t,l}p_S(t,\x)-\Delta_{\x_i}^{\frac{\alpha_i}{2},i,A,t,l}p_S(t,\x')\Big)+\Big(\Delta_{\x_i}^{\frac{\alpha_i}{2},i,A,t,s}p_S(t,\x)-\Delta_{\x_i}^{\frac{\alpha_i}{2},i,A,t,s}p_S(t,\x')\Big).
 \end{eqnarray*}
 Similarly to \eqref{CTR_PREAL},
 \begin{eqnarray}
\Big|\LALB{i}{t} p_S(t,\x)-\LALB{i}{t} p_S(t,\x')\Big|\notag\\
\le \left(\int_{|z|\ge t^{\frac 1{\alpha_i}}} |p_S(t,\x+t^{-(i-1)}(e^A)_iz)-p_S(t,\x'+t^{-(i-1)}(e^A)_iz)|\frac{dz}{|z|^{d_i+\alpha_i}} \right)\nonumber\\
+\left(\frac{C}t \Big|p_S(t,\x)-p_S(t,\x') \Big|\right)
=:
(I_{i,1}+I_{2})(t,\x,\x').\label{CTR_PREAL_2}
 \end{eqnarray}
 In the current Gaussian case, a control similar to the previous \eqref{HOLDER_S} also holds. Precisely, for all  $t>0, (\x,\x')\in \nero{\R^{2N}}, \beta \in (0,1] $: 
 \begin{trivlist}
\item[-] If $|\x-\x'|\ge t^{\frac 12} $, then $|p_S(t,\x)-p_S(t,\x')|\le \left(\frac{|\x-\x'|}{t^{\frac 12}} \right)^\beta\big(p_S(t,\x)+p_S(t,\x')\big)$.
\item[-] If $|\x-\x'|\le t^{\frac 12} $, then usual computations yield (using also $(a-b)^2 \ge \frac{a^2}{2} - b^2$): 
\begin{eqnarray*}
|p_S(t,\x)-p_S(t,\x')|&\le& \int_{0}^1 |\nabla p_S\big(t,\x'+\lambda(\x-\x')\big)||\x-\x'|d\lambda \le C \frac{|\x-\x'|}{t^{\frac{1+N}2}}\exp\Big(-\frac{c}{t}\big(\frac12 |\x'|^2-|\x-\x'|^2\big)\Big)\\
&\le & C |\x-\x'|^{\beta} \, {t^{-  \frac \beta 2}} \,  g_c(t,\x'),
\end{eqnarray*}
where $C\ge 1, 0< c \le 1 $, and the Gaussian density 
$$
\ g_c(t,\z):=\frac{c^{\frac N2}}{(2\pi t)^{\frac{N}2}}\exp\left(-c\frac{|\z|^2}{2t} \right) = \nero{ p_{\bar S}(t,\z)},\;\;\; t>0, \;  \z\in \R^N, 
$$ 
verifies: $|\partial_\x^\i p_S(t,\x)|\le \bar C_{\i}  
t^{-\frac{|\i| }{2}}  \, g_c(t,\x)$,  for  all   $\i=(i^1,\cdots, i^N) \in \N^N,$  $\ |\i|:=\sum_{j=1}^N  i_j\le 2$, $t>0$, $\x \in \R^N$.
 \end{trivlist} Hence, by symmetry we derive:
 \begin{equation}
\label{HOLDER_S_2}
|p_S(t,\x)-p_S(t,\x')|\le C\left( \frac{|\x-\x'|}{t^{\frac 12}}\right)^{\beta}\Big(g_c(t,\x)+g_c(t,\x') \Big)=:C\left( \frac{|\x-\x'|}{t^{\frac 12}}\right)^{\beta}\Big(p_{\bar S}(t,\x)+p_{\bar S}(t,\x') \Big).
 \end{equation} 
 From \eqref{HOLDER_S_2} and \eqref{CTR_PREAL_2} we get: 
\begin{eqnarray}
\label{CTR_I2_2}
|I_2(t,\x,\x')|\le \frac{C}{t}\left( \frac{|\x-\x'|}{t^{\frac 1 2}}\right)^\beta \big(p_{\bar S}(t,\x)+p_{\bar S}(t,\x') \big).
\\ \nonumber 
|I_{i,1}(t,\x,\x')|\le C \left( \frac{|\x-\x'|}{t^{\frac 1 2}}\right)^\beta
\int_{|z|\ge t^{\frac 1{\alpha_i}}}  \big(p_{\bar S}(t,\x+t^{-(i-1)}(e^A)_iz)+p_{\bar S}(t,\x'+t^{-(i-1)}(e^A)_iz) \big) \frac{dz}{|z|^{d_i+\alpha_i}}\\ \nonumber
=:\frac{C}t \left( \frac{|\x-\x'|}{t^{\frac 1 2}}\right)^\beta
\int_{\R^{d_i}}  \big(p_{\bar S}(t,\x+t^{-(i-1)}(e^A)_iz)+p_{\bar S}(t,\x'+t^{-(i-1)}(e^A)_i z) \big) f_{\Gamma^i}(t, z)(dz),
\end{eqnarray}
setting \nero{$f_{\Gamma^i}(t,z): =t c_{\alpha,d_i}\I_{|z|\ge t^{\frac 1{\alpha_i}}}\frac{1}{|z|^{d_i+\alpha_i}} $} with $c_{\alpha,d_i}>0$ s.t. $\int_{\R^{d_i}}f_{\Gamma^i}(t,z) dz=1 $. Hence, $f_{\Gamma^i}(t, \cdot) $ is the density of an $\R^{d_i}$-valued random variable $\Gamma_t^i $. The above integrals can thus be seen as the densities, at point $\x$ and $\x'$ respectively, of the random variable
\begin{equation}
\label{DEF_bar_S1_2}
\bar S_t^{i,1}:=\bar S_t+t^{-(i-1)}(e^A)_i\Gamma_t^i,
\end{equation}
where $\bar S_t = c_1 {\bf W_t} $ has density $p_{\bar S} (t, \cdot)$ (the process $(\bar S_t)_{t \ge 0}$  is proportional to a standard  $N$-dimensional Wiener process ${\bf W}$)
and $\Gamma_t^i $ is independent of $\bar S_t$ and has density $f_{\Gamma^i}(t, \cdot) $.
We finally obtain,
\begin{equation}
\label{CTR_I1_2}
|I_{i,1}(t,\x,\x')|\le \frac Ct \left( \frac{|\x-\x'|}{t^{\frac 1 2}}\right)^\beta \big(p_{\bar S^{i,1}}(t,\x)+p_{\bar S^{i,1}}(t,\x')\big).
\end{equation}
From the definition of $\alpha_i=\frac{2}{1+ 2(i-1)} $ and \eqref{DEF_bar_S1_2},  one can check that 
 $\bar S_t^{i,1}\overset{({\rm law})}{=} t^{\frac 12} \bar S_1^{i,1}$ and 
 similarly to \eqref{INT_2} that the density  $p_{\bar S^{i,1}}(t,\cdot) $ of the random variable $\bar S_t^{i,1}$ satisfies \eqref{INT_Q}. Here, the integrability constraint is still given by $\Gamma_t^i $, there are no constraints on $\bar S_t$ which is Gaussian. The controls for $|\LALB{i}{t} p_S(t,\x)-\LALB{i}{t} p_S(t,\x')| $ follows plugging \eqref{CTR_I1_2} and \eqref{CTR_I2_2} into \eqref{CTR_PREAL_2} defining $q(t,\cdot):=\frac{1}{n+1}(\sum_{i=1}^n p_{\bar S^{i,1}}+p_{\bar S})(t,\cdot).$

It remains to control the difference associated with the \textit{small jumps} part. Write
\begin{eqnarray} \label{PRELIM_SMALL_JUMPS}
&&\LALS{i}{t} p_S(t,\x)-\LALS{i}{t} p_S(t,\x')\\
&=&\int_{|z|\le t^{\frac 1{\alpha_i}}}\Big[\big(p_S(t,\x+t^{-(i-1)}(e^A)_iz)-p_S(t,\x)\big) -\big(p_S(t,\x'+t^{-(i-1)}(e^A)_iz)-p_S(t,\x')\big)\Big] \frac{dz}{|z|^{d_i+\alpha_i}}\nonumber\\ \nonumber
&=&\int_{|z|\le t^{\frac 1{\alpha_i}}}  \int_0^1 d\mu \Big(\nabla p_{ S}(t,\x+\mu t^{-(i-1)}(e^A)_iz)-\nabla p_{S}(t,\x'+\mu t^{-(i-1)}(e^A)_iz)\Big)\cdot  t^{-(i-1)}(e^A)_iz
\frac{dz}{|z|^{d_i+\alpha_i}},
\end{eqnarray}
Usual Gaussian calculations then give that,  if $|\x-\x'|\le t^{\frac{1}{2}} $:
\begin{eqnarray} \label{CTR_SMALL_DIAG_2}
|\LALS{i}{t} p_S(t,\x)-\LALS{i}{t} p_S(t,\x')|\\
\le
C\frac{|\x-\x'|}{t}
\int_{|z|\le t^{\frac 1{\alpha_i}}} \int_0^1 d\mu\int_0^1d\lambda \, g_{c}\big(t,\x'+\mu t^{-(i-1)}(e^A)_iz+\lambda(\x-\x') \big)  |z|t^{-(i-1)} \frac{dz}{|z|^{d_i+\alpha_i}}
\nonumber\\ \nonumber
\le
C \frac{|\x-\x'|}{t}  g_c(t,\x')\int_{|z|\le t^{\frac 1{\alpha_i}}} t^{-(i-1)}|z|\frac{dz}{|z|^{d_i+\alpha_i}}
\le
\frac{C}{t}\frac{|\x-\x'|}{t^{\frac 1 2}} p_{\bar S}(t,\x)
\le \frac{C}{t}\left( \frac{|\x-\x'|}{t^{\frac 1 2}}\right)^\beta p_{\bar S}(t,\x').
\end{eqnarray}
We recall here that the second inequality follows from the fact that on the considered set, i.e.  $|\x-\x'|\le t^{\frac 12},$ $ |z|\le t^{\frac 1{\alpha_i}} $, since $\alpha_i=\frac{2}{1+(i-1)2} $, we have that $t^{-(i-1)} |(e^A)_i z|+ |\x-\x'|\le Ct^{\frac 12} $. Hence,  there exists $C>0$ s.t. for all $(\lambda,\mu)\in [0,1]^2 $, 
$$
g_{c}(t,\x'+\mu t^{-(i-1)}(e^A)_iz+\lambda (\x-\x')-\bxi) \le Cg_{c}(t,\x'-\bxi)
$$ up to a modification of $c$ (cf. \eqref{ste} for a similar estimate). We have exploited as well that:
$$ t^{-(i-1)} \int_{|z|\le t^{\frac 1{\alpha_i}}} \frac{|z|}{|z|^{d_i+\alpha_i}}dz \le C_{\alpha,i}t^{-(i-1)} t^{-1+\frac{1}{\alpha_i}}=C_{\alpha,i}t^{-1+\frac 12}.$$
If now $|\x-\x'|> t^{\frac{1}{\alpha}} $, we derive from \eqref{PRELIM_SMALL_JUMPS}:
\begin{eqnarray}
|\LALS{i}{t} p_S(t,\x)-\LALS{i}{t} p_S(t,\x')|\nonumber\\
\le \frac{C}{t^{\frac 12}}\int_{|z|\le t^{\frac 1{\alpha_i}}} \int_0^1 d\mu \Big( p_{S}(t,\x+\mu t^{-(i-1)} (e^A)_iz )+ p_{S}(t,\x'+\mu t^{-(i-1)}(e^A)_iz) \Big) \notag\\
\times t^{-(i-1)}  |z|\frac{dz}{|z|^{d_i+\alpha_i}}
\le \frac{C}{t^{\frac 12}}\Big( g_{c}(t,\x )+ g_{c}(t,\x') \Big) \int_{|z|\le t^{\frac 1{\alpha_i}}} t^{-(i-1)}|z|\frac{dz}{|z|^{d_i+\alpha_i}}\nonumber\\
\le \frac{C}{t} \Big( p_{\bar S}(t,\x)+p_{\bar S}(t,\x')\Big)\le \frac{C}{t} \left( \frac{|\x-\x'|}{t^{\frac{1}{2}}}\right)^\beta\Big( p_{\bar S}(t,\x)+p_{\bar S}(t,\x')\Big).
\label{CTR_SMALL_OUT_DIAG_2}
\end{eqnarray}
Equations \eqref{CTR_SMALL_DIAG_2} and \eqref{CTR_SMALL_OUT_DIAG_2} give the stated control for $|\LALS {i}{t}p_S(t,\x)-\LALS {i}{t} p_S(t,\x')|  $. This completes the proof of \textit{(ii)} for $\alpha=2 $. The controls \textit{(i)} and \textit{(iii)} are obtained following the lines of  the proof of Lemma \ref{SENS_SING_STAB} with the above modifications. 

\section{Singular integrals in homogeneous spaces  }
\label{APP_CW}
\subsection{The main result
}
 We recall here the basic Coifmann-Weiss theorem on singular integrals (see \cite{coif:weis:71}).
 Let $X$ be a set.
A function 
 $d: X \times X \to \R_+$ is called a {\it quasi-distance} if 
it satisfies :

1) $d(x,y) >0$ if and only if $x \not = y$;

2) $d(x,y) = d(y,x)$, for $x,y \in X$;

3) there exists $C>0$ such that 
$
d(x,z)$ $ \le C \big (d(x,y) + d (y,z) \big),$ for $x,y,z \in X.$

\smallskip \noindent We also introduce  related balls $B(x,r) = \{ y \in X \; : \; d(y,x) <r \}$, $x \in X$, $r>0$, which form a complete system of neighbourhoods of $X$, so that $X$ is a Hausdorff space. We require that balls are open sets in this topology.

 A {\it homogeneous space} is a triple $(X, d, \mu)$ where $d$ is a quasi-distance and $\mu $ is a Borel measure  such that there exists $A>0$ for which
\begin{equation} \label{ball}
 0 <  \mu (B(x,2r)) \le A \, \mu (B(x, {r}
 )) < \infty,\;\;\; x \in X, \;\; r>0.
\end{equation} 
\begin{theorem} \label{coi} [Coifman-Weiss] Let $(X,d, \mu)$ be a homogeneous space. Let  $k(x,y) \in L^2 (X \times X, \mu \otimes \mu)$ and consider the operator $K :  L^2 (X ,\mu) \to  L^2 (X ,\mu) $,
$
K f (x)$ $ = \int_X k (x,y) f(y) \mu (dy),$ $ f \in  L^2 (X ,\mu). 
$ Assume that 

\smallskip 
H1) There exists $C_1>0$ such that $\|Kf \|_{L^2} \le C_1 \| f\|_{L^2}$, for any $f \in L^2 (X ,\mu)$. 
 
H2)   There exists $C_2, \, C_3 >0$ such that, for any $y, y_0 \in X$,
\begin{align*}
\int_{ d(x, y_0) > C_2 d(y, y_0)} \, |k (x,y) - k(x,y_0)| \mu (dx) \le  C_3.
\end{align*}
Then, for any $p \in (1,2]$ there exists $A_p$ (depending  on $p$, $C_i$, $i=1,2,3$) such that for $f \in L^2 \cap L^p$ one has:
\begin{align*}
 \|Kf \|_{L^p} \le A_p \| f\|_{L^p}. 
\end{align*}
Moreover, there exists $A_1 >0$, such that    $\mu  \big ( x \in X \; :\; |Kf (x)| > \alpha \big) \le \frac{A_1}{\alpha} \| f\|_{L^1}$, $\alpha>0$, $f \in L^1 \cap L^2$.
 \end{theorem}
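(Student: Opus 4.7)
The plan is to deduce the strong $(p,p)$ bounds for $p\in(1,2]$ via the classical Calder\'on-Zygmund machinery adapted to spaces of homogeneous type: first establish the weak-type $(1,1)$ inequality for $K$, and then invoke Marcinkiewicz interpolation together with hypothesis H1 to reach every $p\in(1,2]$. The claim for $L^1\cap L^2$ then extends to $L^p\cap L^2$ by a density argument, since the hypotheses are symmetric in the two norms.

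The core ingredient I would develop first is a Calder\'on-Zygmund decomposition of $f\in L^1(X,\mu)\cap L^2(X,\mu)$ at an arbitrary height $\alpha>0$. Using the doubling condition \eqref{ball} together with a Vitali-type covering lemma, I would produce a countable family of pairwise disjoint quasi-balls $B_j=B(x_j,r_j)$ with the properties: $|f|\le C\alpha$ $\mu$-a.e. on $X\setminus\bigcup_j B_j$; $\sum_j\mu(B_j)\le C\alpha^{-1}\|f\|_{L^1}$; and a splitting $f=g+b$ with $b=\sum_j b_j$, $\mathrm{supp}(b_j)\subset B_j$, $\int b_j\,d\mu=0$, $\|g\|_{L^\infty}\le C\alpha$, and $\|g\|_{L^1}\le\|f\|_{L^1}$. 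Constructing this decomposition in an abstract quasi-metric doubling space is the deepest and most delicate point of the argument and will be the main obstacle; it nonetheless relies only on the quasi-triangle inequality and \eqref{ball}, and is carried out in Chapter III of Coifman-Weiss.

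Next, write $\mu\{|Kf|>\alpha\}\le \mu\{|Kg|>\alpha/2\}+\mu\{|Kb|>\alpha/2\}$. For the \emph{good} part, Chebyshev and H1 give
\begin{equation*}
\mu\{|Kg|>\alpha/2\}\;\le\;\tfrac{4}{\alpha^2}\,C_1^2\|g\|_{L^2}^2\;\le\;\tfrac{4C_1^2}{\alpha^2}\,\|g\|_{L^\infty}\|g\|_{L^1}\;\le\;C\,\alpha^{-1}\|f\|_{L^1}.
\end{equation*}
For the \emph{bad} part, introduce enlargements $B_j^*:=B(x_j,C_2(\Lambda+1)r_j)$ (where $\Lambda$ is the quasi-triangle constant), so that by doubling $\mu\bigl(\bigcup_j B_j^*\bigr)\le C\sum_j\mu(B_j)\le C\alpha^{-1}\|f\|_{L^1}$; it then suffices to estimate $\int_{X\setminus\bigcup_j B_j^*}|Kb|\,d\mu$. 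Using the mean-zero property of each $b_j$, for $x\notin B_j^*$ one has
\begin{equation*}
Kb_j(x)=\int_{B_j}\bigl(k(x,y)-k(x,x_j)\bigr)b_j(y)\,d\mu(y);
\end{equation*}
Fubini together with hypothesis H2 yields $\int_{X\setminus B_j^*}|Kb_j(x)|\,d\mu(x)\le C_3\|b_j\|_{L^1}$, and summing in $j$ gives $\|Kb\|_{L^1(X\setminus\bigcup B_j^*)}\le C\|f\|_{L^1}$. A final application of Chebyshev on this exterior set, combined with the measure estimate for $\bigcup_j B_j^*$, delivers the weak-type inequality $\mu\{|Kf|>\alpha\}\le A_1\alpha^{-1}\|f\|_{L^1}$.

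Finally, Marcinkiewicz interpolation between this weak $(1,1)$ bound and the strong $(2,2)$ bound of H1 furnishes the strong $(p,p)$ estimate for every $p\in(1,2]$. The only substantive difficulty in the whole scheme is the Whitney/Calder\'on-Zygmund covering of $X$: once that decomposition is in place, everything reduces to the standard good-part/bad-part splitting transplanted from the Euclidean setting via the doubling property and the H\"ormander-type cancellation H2.
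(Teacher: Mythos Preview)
The paper does not supply its own proof of this statement: Theorem~\ref{coi} is simply quoted from Coifman--Weiss \cite{coif:weis:71} as a known result, with no argument given in the text. Your outline is exactly the classical Calder\'on--Zygmund scheme carried out in Chapter~III of \cite{coif:weis:71} (Calder\'on--Zygmund decomposition in a doubling quasi-metric space, good/bad splitting, weak $(1,1)$ via H2, then Marcinkiewicz), so your proposal matches the source the paper defers to.
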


\subsection{The Strip $\Sc$ viewed as a Homogeneous space}

We eventually need the following topological result.  
Similar results can be found in \cite{bram:cerr:manf:96} and \cite{difr:poli:06}.
\begin{proposition}[]\label{PROP_DOUBLING}
Let $d$ be as in \eqref{DEF_QD} with $\alpha \in (0,2]$. Then $d$ is a quasi-distance on $\Sc$, i.e.  $d\big (t,\x),(s,\y) \big) =0$ if and only if $(t,\x)=(s,\y)$, $d$ is symmetric and moreover there exists  $C= C (\A{A})>0 $, s.t.  for all $(t,\x), (\sigma,\bxi), (s,\y)\in \Sc^3 $:
\begin{equation}
\label{QDT}
d\big((t,\x),(s,\y) \big)\le C\, \Big( d\big((t,\x),(\sigma,\bxi) \big)+d\big((\sigma,\bxi),(s,\y) \big)\Big).
\end{equation}
Also, $(\Sc,d, \mu)$ where $\mu$ is the Lebesgue measure
is a homogeneous space. 
\end{proposition}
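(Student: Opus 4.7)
The plan is to verify non-negativity, symmetry, and the quasi-triangle inequality for $d$, then the doubling condition \eqref{ball}. Non-negativity and the implication $d((t,\x),(s,\y))=0 \Rightarrow (t,\x)=(s,\y)$ are immediate from \eqref{rho_homo}: if $d=0$ both $\rho$-terms in \eqref{DEF_QD} vanish, forcing $t=s$ and then $\x=\y$. Symmetry is built into \eqref{DEF_QD} since $\rho(\tau,\z)=\rho(-\tau,-\z)$, so swapping endpoints merely exchanges the two terms up to that sign change. The genuine work is the quasi-triangle inequality \eqref{QDT}; along the way the equivalence \eqref{EQUIV_SIMM_E_NO} will also drop out.

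For \eqref{QDT} I would insert an intermediate point $(\sigma,\bxi)$ and use the algebraic splitting
\begin{equation*}
e^{(s-t)A}\x - \y \,=\, e^{(s-\sigma)A}\big[e^{(\sigma-t)A}\x - \bxi\big] + \big[e^{(s-\sigma)A}\bxi - \y\big].
\end{equation*}
Componentwise, the contribution of the second summand to the $i$-th term of $\rho(s-t,\cdot)$ is directly bounded by $\rho(s-\sigma, e^{(s-\sigma)A}\bxi-\y)$. For the first, the polynomial bound \eqref{FORM_RES}, i.e.\ $|(e^{rA}\q)_i|\le c\sum_{k=1}^{i}|r|^{i-k}|\q_k|$, applied with $r=s-\sigma$ and $\q=e^{(\sigma-t)A}\x-\bxi$ produces, after raising to the power $1/(1+\alpha(i-1))$, mixed terms of the form $|s-\sigma|^{(i-k)/(1+\alpha(i-1))}|\q_k|^{1/(1+\alpha(i-1))}$. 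Here I would invoke the weighted AM-GM inequality with the convex combination
\begin{equation*}
p_{i,k}=\frac{\alpha(i-k)}{1+\alpha(i-1)},\qquad q_{i,k}=\frac{1+\alpha(k-1)}{1+\alpha(i-1)},\qquad p_{i,k}+q_{i,k}=1,\ k\le i,
\end{equation*}
rewriting the mixed term as $\big(|s-\sigma|^{1/\alpha}\big)^{p_{i,k}}\big(|\q_k|^{1/(1+\alpha(k-1))}\big)^{q_{i,k}} \le p_{i,k}|s-\sigma|^{1/\alpha}+q_{i,k}|\q_k|^{1/(1+\alpha(k-1))}$; each summand is then absorbed by $\rho(s-\sigma,e^{(s-\sigma)A}\bxi-\y)+\rho(\sigma-t,e^{(\sigma-t)A}\x-\bxi)$. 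Summing over $i$ and using $|s-t|^{1/\alpha}\le 2^{1/\alpha}(|s-\sigma|^{1/\alpha}+|\sigma-t|^{1/\alpha})$ yields the quasi-triangle inequality for the single term $\rho(s-t,e^{(s-t)A}\x-\y)$; the other term in \eqref{DEF_QD} is treated identically. Applying the same bound \eqref{FORM_RES} to the identity $\x - e^{(t-s)A}\y = e^{(t-s)A}(e^{(s-t)A}\x-\y)$ (and its inverse) also shows that the two $\rho$-terms in \eqref{DEF_QD} are comparable up to a constant, which is precisely \eqref{EQUIV_SIMM_E_NO}. The main obstacle I anticipate here is the combinatorial bookkeeping: the exponents $1+\alpha(i-1)$ depend on $i$ and the identities $p_{i,k}+q_{i,k}=1$ have to be checked simultaneously for every pair $k\le i$; absent a Lie group structure this matching is done by hand rather than inherited from a group dilation (this is the distinguishing feature from \cite{bram:cupi:lanc:prio:10}).

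For the doubling condition, the change of variable $(s,\y)\mapsto(\tau,\y'):=(t-s,e^{(s-t)A}\x-\y)$ has unit Jacobian and transforms $B((t,\x),r)\cap\Sc$ into the center-independent region $\{(\tau,\y'):\tau\in[t-T,t+T],\ \tfrac{1}{2}[\rho(\tau,\y')+\rho(\tau,e^{\tau A}\y')]<r\}$. Using the rescaling $e^{\delta^\alpha\tau A}\M_{\delta^\alpha}=\M_{\delta^\alpha}e^{\tau A}$ derived from \eqref{str}, together with $\rho(\delta^\alpha\tau,\M_{\delta^\alpha}\y')=\delta\,\rho(\tau,\y')$ from Remark \ref{REM_HOMO}, the dilation $(\tau,\y')\mapsto(\delta^\alpha\tau,\M_{\delta^\alpha}\y')$ is a bijection from the radius-$r$ region to the radius-$\delta r$ region with constant Jacobian $\delta^{\alpha(1+\sum_{i=1}^n(i-1)d_i)}$. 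When the ball stays clear of the strip boundary $|\tau|=T$, this gives $\mu(B((t,\x),2r))=2^{\alpha(1+\sum_i(i-1)d_i)}\mu(B((t,\x),r))$ directly. The truncated case is settled by comparing the lengths of the two $\tau$-intervals $[t-T,t+T]\cap\{|\tau|<(2r)^\alpha\}$ and $[t-T,t+T]\cap\{|\tau|<r^\alpha\}$, whose ratio is bounded by a constant independent of $t$ and $r$; the spatial integration is unaffected by the strip. This yields \eqref{ball} and completes the identification of $(\Sc,d,\mu)$ as a homogeneous space.
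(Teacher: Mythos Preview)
Your quasi-triangle argument is essentially the paper's: the same splitting $e^{(s-t)A}\x-\y=e^{(s-\sigma)A}[e^{(\sigma-t)A}\x-\bxi]+[e^{(s-\sigma)A}\bxi-\y]$, the same bound \eqref{FORM_RES} on $(e^{rA}\q)_i$, and the same Young/AM--GM step---your weights $p_{i,k},q_{i,k}$ are exactly the reciprocals of the paper's Young exponents $p_j,q_j$. The paper packages this as the single inequality \eqref{ci11}, $\rho_{\rm Sp}(e^{(r+u)A}\x-e^{rA}\bxi)\le C\rho(r,e^{uA}\x-\bxi)$, and derives \eqref{EQUIV_SIMM_E_NO} from it in the same way you do.

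For the doubling property you take a different route (direct dilation scaling) from the paper (which sandwiches $|B((t,\x),\delta)|$ between two powers of $\delta$ via an inner ball $\bar B$ built from the equivalence \eqref{EQUIV_FLOW}), and there are two concrete slips. First, the dilation in Remark~\ref{REM_HOMO} is $(\tau,\y')\mapsto(\delta^\alpha\tau,\,\delta\,\M_{\delta^\alpha}\y')$, not $(\tau,\y')\mapsto(\delta^\alpha\tau,\M_{\delta^\alpha}\y')$: with your map one computes
\[
\rho(\delta^\alpha\tau,\M_{\delta^\alpha}\y')=\delta|\tau|^{1/\alpha}+\sum_{i=1}^n\delta^{\frac{\alpha(i-1)}{1+\alpha(i-1)}}|\y_i'|^{\frac{1}{1+\alpha(i-1)}}\neq\delta\,\rho(\tau,\y').
\]
With the correct dilation the Jacobian is $\delta^{\alpha+N+\alpha\sum_i(i-1)d_i}$, matching the paper's exponent in \eqref{FIRST_CTR_VOL}; your stated Jacobian is missing the factor $\delta^N$.

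Second, the boundary case does not reduce to ``comparing the lengths of the two $\tau$-intervals; the spatial integration is unaffected''. The $\y'$-slice measure $V(\tau,r)=\big|\{\y':|\tau|^{1/\alpha}+\tfrac12[\rho_{\rm Sp}(\y')+\rho_{\rm Sp}(e^{\tau A}\y')]<r\}\big|$ genuinely depends on $\tau$ and vanishes as $|\tau|\uparrow r^\alpha$, so $\mu(B)$ is not a product of a $\tau$-length and a fixed spatial factor. A working patch in your framework: note that $[t-T,t+T]$ always contains $[0,T]$ or $[-T,0]$; the measure-preserving involution $(\tau,\y')\mapsto(-\tau,e^{\tau A}\y')$ (Jacobian $\pm 1$, leaves the region $\{d<r\}$ invariant) gives $\mu(B((t,\x),r))\ge\tfrac12|\{d<r\}|$ when $r^\alpha\le T$, while for $r^\alpha>T$ restricting to $\tau\in[0,T/2]$ forces $|\tau|^{1/\alpha}\le 2^{-1/\alpha}r$ and hence $V(\tau,r)\ge c\,r^{N+\alpha\sum(i-1)d_i}$ uniformly, which together with $\mu(B(2r))\le 2T\cdot C(2r)^{N+\alpha\sum(i-1)d_i}$ yields doubling with a constant independent of $T$.
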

\begin{proof} Let us first deal with the quasi-triangle inequality. Introduce for $\z\in \R^N$:
\begin{eqnarray*}
\rho_{{\rm Sp}}(\z)=\sum_{i=1}^n |\z_i|^{\frac{1}{1+\alpha(i-1)}},
\end{eqnarray*}
which corresponds to the \textit{spatial} contribution in the definition of $\rho $ in \eqref{rho_homo}.
For $(t,\x),  (\sigma,\bxi), (s,\y)\in \Sc^3 $ write:
\begin{eqnarray}
d\big( (t,\x),(s,\y)\big)
&\le & \frac 12 \Big( 2 |t-s|^{\frac 1\alpha}+\rho_{{\rm Sp}}\big( e^{(s-t)A}\x-e^{(s-\sigma)A}\bxi\big)+\rho_{{\rm Sp}}\big(e^{(s-\sigma)A}\bxi- \y\big)\nonumber\\
&& +\rho_{{\rm Sp}}\big( \x-e^{(t-\sigma)A}\bxi)+\rho_{{\rm Sp}}\big(e^{(t-\sigma)A}\bxi-e^{(t-s)A}\y\big) \Big).
\label{PREAL_QT}
\end{eqnarray} 
In the sequel consider  $r, u \in \R$, $\x, \bxi \in \R^N$. There exists $C =C(A) >0$ such that
\begin{equation} \label{ci11}
\rho_{{\rm Sp}}\big( e^{(r+u)A}\x-e^{r A }\bxi\big) \le C(\rho_{{\rm Sp}}(e^{ u A}\x-\bxi)+|r|^{\frac 1 \alpha}) = C\rho(r, e^{ u A}\x-\bxi).
\end{equation} 
Indeed, observe that (using the structure of the matrix $e^{tA} $, as established in  \eqref{FORM_RES}):
\begin{eqnarray*}
 \rho_{{\rm Sp}}\big( e^{(r+u)A}\x-e^{r A }\bxi\big) =
 \rho_{{\rm Sp}}\big( e^{rA}( e^{uA}\x- \bxi)\big)
=
\sum_{i=1}^n\big|\big(e^{r A} (e^{uA}\x-\bxi)\big)_i\big|^{\frac{1}{1+\alpha(i-1)}} \\ 
\le C \sum_{i=1}^n \Big(\sum_{j=1}^i |r|^{i-j}|(e^{uA}\x-\bxi)_j \big|\Big)^{\frac{1}{1+\alpha(i-1)}}
 \le C  \sum_{i=1}^n \sum_{j=1}^i |r|^{\frac{i-j}{\alpha(i-1)+1}}|(e^{ u A}\x-\bxi)_j|^{\frac{1}{1+\alpha(i-1)}}
\\
 \le  C \sum_{i=1}^n\Big(|(e^{uA}\x-\bxi)_i|^{\frac{1}{1+\alpha(i-1)}} +
\sum_{j=1}^{i-1} |r|^{\frac{i-j}{1+\alpha(i-1)}}|(e^{uA}\x-\bxi)_j|^{\frac{1}{1+\alpha(i-1)}}\Big),
\end{eqnarray*}
with the convention that $\sum_{j=1}^0=0 $. For each entry in the sum over $j$ we now use the Young inequality with exponents, $q_j=\frac{1+\alpha(i-1)}{1+\alpha(j-1)}>1, p_j=\Big(1-\frac{1+\alpha(j-1)}{1+\alpha(i-1)}\Big)^{-1}=(\alpha\frac{i-j}{1+\alpha(i-1)})^{-1}=\frac{1}{\alpha}\frac{1+\alpha(i-1)}{i-j}$. Hence:
\begin{eqnarray*}
 \rho_{{\rm Sp}}\big( e^{(r+u)A}\x-e^{r A }\bxi\big)
 \le  C \sum_{i=1}^n\Big(|(e^{uA}\x-\bxi)_i|^{\frac{1}{1+\alpha(i-1)}} +\sum_{j=1}^{i-1} \Big[\frac{|r|^{\frac 1\alpha }}{p_j} +\frac{|(e^{uA}\x-\bxi)_j|^{\frac{1}{1+\alpha(j-1)}}}{q_j}\Big]\Big)\nonumber\\
\le  C\Big( \sum_{i=1}^n|(e^{uA}\x-\bxi)_i|^{\frac{1}{1+\alpha(i-1)}} +|r|^{\frac 1\alpha}\Big)
=C(\rho_{{\rm Sp}}(e^{uA}\x-\bxi)+|r|^{\frac 1 \alpha}).\label{PREAL_INJ_QT}
\end{eqnarray*}
By \eqref{ci11} we derive $\rho_{{\rm Sp}}\big( e^{(s-t)A}\x-e^{(s-\sigma)A}\bxi\big) \le  C(\rho_{{\rm Sp}}(e^{(\sigma-t)A}\x-\bxi)+|s-\sigma|^{\frac 1 \alpha})$,  with $r = s- \sigma  $ and $u= \sigma -t$.
 
Similarly, one get:
$\rho_{{\rm Sp}}\big(e^{(t-\sigma)A}(\bxi-e^{(\sigma-s)A}\y)\big)\le C\Big(|t-\sigma|^{\frac 1\alpha}+ \rho_{{\rm Sp}}(\bxi-e^{(\sigma-s)A}\y)\Big).$ We obtain 
in \eqref{PREAL_QT}:
\begin{eqnarray*}
d\big( (t,\x),(s,\y)\big)&\le&  C \Big( |t-\sigma|^{\frac 1\alpha}+|s-\sigma|^{\frac 1\alpha}+\rho_{{\rm Sp}}\big( e^{(\sigma-t)A}\x-\bxi\big)+\rho_{{\rm Sp}}\big(e^{(s-\sigma)A}\bxi- \y\big)\nonumber\\
&& +\rho_{{\rm Sp}}\big( \x-e^{(t-\sigma)A}\bxi)+\rho_{{\rm Sp}}\big(\bxi-e^{(\sigma-s)A}\y\big) \Big)\le C\Big(d\big( (t,\x),(\sigma,\bxi)\big)+d\big( (\sigma,\bxi),(s,\y)\big)\Big).
\end{eqnarray*}
This proves \eqref{QDT}. 

Let us now define the $d$-balls and check the doubling property. Namely, for a point $(t,\x)\in \Sc $ and a given $\delta $, we introduce:
 $B\big((t,\x),\delta \big)$ $:=\{ (s,\y)\in \Sc: d\big((t,\x),(s,\y))< \delta \}.$
From the above definition, \eqref{DEF_QD} and the invariance of the Lebesgue measure $|\cdot| $ by translation, recalling as well that for all $r\in\R, \det(e^{rA})=1 $, we obtain that there exists $c:=c(\A{A})>0$ s.t. for all $\delta>0, (t,\x)\in \Sc $,
\begin{equation}
\label{FIRST_CTR_VOL}
|B((t,\x),\delta)|\le  c \delta^{\alpha+\sum_{i=1}^n d_i(1+(i-1)\alpha)}=c \delta^{\alpha+N+\sum_{i=1}^n d_i(i-1)\alpha}.
\end{equation}
Assume now that the following control holds:  there exists $\kappa:=\kappa( \A{A})>1$ s.t. for all $\big((t,\x), (s,\y)\big)\in \Sc^2 $
\begin{equation}
\label{EQUIV_FLOW}
\kappa^{-1}\rho_{}\big(s-t,  \x- e^{(t-s)A}\y\big)\le \rho_{}\big(s-t, e^{(s-t)A}\x- \y\big)\le \kappa \rho_{}\big(s-t, \x- e^{(t-s)A}\y\big).
\end{equation}
Equation \eqref{EQUIV_FLOW} means that we have equivalence  of the contributions associated with the forward and backward flows for the homogeneous pseudo-norm $\rho$.
 Then, introducing
\begin{eqnarray*}
\bar B\big((t,\x),\delta\big):=\Bigl\{(s,\y)\in \Sc:   \rho\big(s-t,e^{(s-t)A}\x-\y\big)< \frac{2\delta}{1+\kappa} \Bigr\},\\
\end{eqnarray*}
 we have that $\bar B\big((t,\x),\delta\big)\subset B\big((t,\x),\delta\big) $. Indeed, for all $(s,\y)\in \bar B\big((t,\x),\delta\big)$,
\begin{eqnarray*}
 d\big((t,\x),(s,\y)\big)&=&\frac 12 \Big( \rho(t-s,e^{(s-t)A}\x-\y)+\rho(t-s,\x-e^{(t-s)A}\y)\Big)\\
 &\le & \frac12 (1+\kappa)\rho(s-t,e^{(s-t)A}\x-\y)< \delta,
 \end{eqnarray*}
 using \eqref{EQUIV_FLOW} for the penultimate inequality.
 Since we also have, up to a modification of $c$ in \eqref{FIRST_CTR_VOL}, that for all $(t,\x)\in \Sc,\ \delta>0,\  |\bar B((t,\x),\delta)|\ge c^{-1}\delta^{\alpha+N+ \sum_{i=1}^n d_i(i-1)\alpha  
 } $, we finally get:
$$c^{-1}\delta^{\alpha+N+\sum_{i=1}^n d_i(i-1)\alpha}\le |B((t,\x),\delta)|\le c \delta^{\alpha+N+\sum_{i=1}^n d_i(i-1)\alpha},$$
which gives the doubling property of the $d$-balls in $\Sc$.

It  only remains to prove \eqref{EQUIV_FLOW}. It is  enough to prove that there exists $C>0$ s.t., for any $r \in \R$, $\x, \bxi \in \R^N,$ 
\begin{equation} \label{de11}
\rho_{}\big(r,  \x- e^{rA}\bxi\big)\le C \rho_{}\big(r, e^{-rA}\x- \bxi\big).
\end{equation} 
This follows easily from \eqref{ci11} with $u =-r$. 
 \end{proof}

 \section*{Acknowledgements}
For the first two authors, the article was prepared within the framework of a subsidy granted to the HSE by the Government of the Russian Federation for the implementation of the Global Competitiveness Program.

The last author would like to thank the Universit\'e d'Evry Val d'Essonne and the Laboratoire de Mod\'elisation Math\'ematique d'Evry (LaMME) for an invitation during which this work began.


\bibliographystyle{alpha}
 \bibliography{MyLibrary}

\end{document}